\newtheorem{thm}{Theorem}[section]
\newtheorem{dfn}{Definition}[section]
\newtheorem{lem}[thm]{Lemma}
\newtheorem{pro}[thm]{Proposition}
\newtheorem{cor}[thm]{Corollary}
\title{\bfseries{\Large{Generation of interfaces for multi-dimensional stochastic Allen-Cahn equation with a noise smooth in space}}}
\author{\Large{Kai Lee} \\ $University \  of \  Tokyo$}
\date{}
\begin{document}

\maketitle

\begin{abstract}
In this paper, we study the generation of interfaces for a stochastic Allen-Cahn equation with general initial value \cite{kl} in the multi-dimensional case that external noise is given by $Q$-Brownian motion. We prove that interfaces, for $d$-dimensional stochastic Allen-Cahn equation with scaling parameter $\varepsilon >0$, are generated at the time of order $O(\varepsilon |\log \varepsilon|)$. Especially, in one-dimensional case, we give more detailed estimate and shape of the generated interface than that obtained in \cite{kl}. Assuming that the $Q$-Brownian motion is smooth in space variable, we extend a comparison theorem for PDE to SPDE's in order to prove the generation. Moreover, we connect the generated interface to the motion of interface in one-dimension \cite{f94}. In this case, we consider the white noise only in time multiplied by $\varepsilon ^{\gamma} a(x)$ as the noise term, where $a$ is a smooth function which has a compact support. This is the special case of $Q$-Brownian motion. We take the time scale of order $O(\varepsilon^{-2\gamma - 1})$ for studying the motion of interface. 
\end{abstract}

\section{Introduction}
\label{sec1}
In this paper, we study the sharp interface limit for multi-dimensional stochastic Allen-Cahn equation with Neumann boundary condition:
\begin{align}
\label{eq:spde2}
\begin{cases}
\dot{u} ^\varepsilon (t,x) = \Delta u^\varepsilon (t,x)+\displaystyle{\frac{1}{\varepsilon}} f(u^\varepsilon (t,x) ) + \dot{W} _t ^\varepsilon (x),\ \ \ t> 0,\  x\in D,\\
u^\varepsilon (0,x) = u_0 (x),\ \ \ x\in D, \\
\frac{\partial u}{\partial \nu} (t,x) = 0,\ \ \ t> 0,\  x\in \partial D
\end{cases}
\end{align}
where $D$ is a domain of $\mathbb{R}^d$, for $d\geq 2$, which has a $C^1$ boundary and $\nu$ is the exterior unit normal vector of $\partial D$. We use the notation of $\dot{u} = \frac{\partial u}{\partial t}$, $\Delta u=\sum _{i=1} ^d \frac{\partial ^2 u}{\partial x_i ^2}$ and $\dot{W} _t ^\varepsilon (x)$ is an external random noise. We give a mathematical meaning to the solution $u^\varepsilon$ as a mild solution or a generalized solution (see \cite{dpz}). We assume that the bistable reaction term $f$ has $\pm 1$ as stable points and satisfies $\int _{-1} ^1 f(u)du=0$.

In \cite{kl}, we considered one-dimensional case and computed a generation time of the solution $u^\varepsilon$ in the case that $\dot{W} _t ^\varepsilon (x):=\varepsilon ^\gamma a(x) \dot{W} _t (x)$. We assumed that $\dot{W} _t (x)$ is a space time white noise and $a\in C_0 ^\infty (\mathbb{R})$. We obtained that the first generation time is of order $O(\varepsilon |\log \varepsilon |)$ and connected to the motion of interface which is the result of Funaki \cite{f}. The proper time scale for the motion is of order $O(\varepsilon ^{-2\gamma -\frac{1}{2}})$.

In this paper, we show the generation of interface in $d$-dimensional setting for $d \geq 1$ in the case that the external noise is smooth in space. Especially, in the case of $d=1$, we get more detailed shape of the generated interface than the result of \cite{kl}. Moreover, we consider the motion of interface in the case of $\dot{W} _t ^\varepsilon (x):=\varepsilon ^\gamma a(x) \dot{W} _t$ for $\varepsilon >0$ where the function $a$ is same as above and $\dot{W} _t$ is a white noise only in time. We can regard the noise $a(x) \dot{W} _t$ as a special case of $Q$-Brownian motion. Funaki \cite{f94} investigated the dynamics of the interface in this case with an initial value which has already formed an interface and show that the proper time scale is of order $O(\varepsilon ^{-2\gamma -1})$. We consider more general initial value, compute the generation time and connect to the result of \cite{f94}.

\subsection{Setting of the model}
At first, we consider $d$-dimensional equation (\ref{eq:spde2}) for $d \geq 2$ with Nenmann boundary condition. The external noise term is defined by $\dot{W} _t ^\varepsilon (x):=\varepsilon ^\gamma \dot{W} _t ^{Q_d} (x)$ where $\varepsilon >0$. The noise $\dot{W} _t ^{Q_d} (x)$ is a formal time derivative of a $Q_d$-Brownian motion on $\mathbb{R} ^d$, which is smooth in a space variable and has a covariance structure;
\begin{align}
\label{cov}
E[W^{Q_d} _t (x) W^{Q_d} _s (y)] = (t\wedge s) Q_d (x,y),
\end{align}
where the function $Q_d :D \times D \to \mathbb{R}$ is a positive, symmetric and compactly supported smooth function. We assume that $\frac{\partial}{\partial x} \frac{\partial}{\partial y} Q_d (x,y)$ and $\frac{\partial ^2}{\partial x^2} \frac{\partial ^2}{\partial y^2} Q_d (x,y)$ are positive if $x=y$. The initial value $u_0 \in C^2(\overline{D})$ satisfies
\begin{align}
\label{eq:ini2}
\| u_0 \| _\infty + \| u_0 '  \| _\infty + \| u_0 '' \| _\infty \leq C_0,
\end{align}
where $\| \cdot \| _\infty$ is the supremum norm of $C(D)$.

Next we consider 1-dimensional case;
\begin{align}
\label{eq:spde}
\begin{cases}
\dot{u} ^\varepsilon (t,x) = \Delta u^\varepsilon (t,x)+\displaystyle{\frac{1}{\varepsilon}} f(u^\varepsilon (t,x) ) + \dot{W} _t ^{\varepsilon} (x),\ \ \ t> 0,\  x\in \mathbb{R} ,\\
u^\varepsilon (0,x) = u_0 ^\varepsilon (x),\ \ \ x\in \mathbb{R} ,\\
u^\varepsilon (t,\pm \infty)= \pm 1,\ \ \ t\geq  0.
\end{cases}
\end{align}
We define the external noise by $\dot{W} _t ^\varepsilon (x):=\varepsilon ^\gamma \dot{W} _t ^{Q} (x)$ where $W _t ^{Q} (x)$ is a $Q$-Brownian motion on $\mathbb{R}$ which has the same covariance as (\ref{cov}) where $Q :\mathbb{R} \times \mathbb{R} \to \mathbb{R}$ is a positive, symmetric and compactly supported smooth function. We assume that $\frac{\partial}{\partial x} \frac{\partial}{\partial y} Q (x,y)$ and $\frac{\partial ^2}{\partial x^2} \frac{\partial ^2}{\partial y^2} Q (x,y)$ are positive if $x=y$. We also assume that $u_0 ^\varepsilon \in C^2(\mathbb{R})$ and there exist constants $C_0 >1$, $C$, $C'$, $\kappa$, $K>0$ and a function $g_1$, $g_2 \in H^3(\mathbb{R})$ such that
\begin{align}
\label{eq:ini}
 \begin{cases}
   \text{(i)} \| u_0 ^\varepsilon \| _\infty + \| u_0 ^{\varepsilon \prime}  \| _\infty + \| u_0 ^{\varepsilon \prime \prime} \| _\infty \leq C_0, & \\
   \text{(ii)} \text{There exists a unique zero } \xi _0 \in [-K,K] \text{ such that } u_0 ^\varepsilon (\xi _0)= 0, & \\
   \text{(iii)} | u_0 ^\varepsilon (x)| \geq C \varepsilon ^{\frac{1}{2}}\ (|x-\xi _0| \geq C' \varepsilon ^{\frac{1}{2}}), & \\
   \text{(iv)} | u_0 ^\varepsilon (x) - 1 | \leq \varepsilon ^{\kappa } g_1(x) \  (x\geq K), & \\
   \text{(v)} | u_0 ^\varepsilon (x) + 1 | \leq \varepsilon ^{\kappa } g_2(x) \  (x\leq  -K), & \\
 \end{cases}
\end{align}
where $\| \cdot \| _\infty$ is the supreme norm of $C(\mathbb{R})$. In this paper, we take $K=1$ and assume that the support of $Q(x,y)$ is included in $[-1,1]\times [-1,1]$ without loss of generality. 

The reaction term $f\in C^2( \mathbb{R} )$ of SPDEs (\ref{eq:spde2}) and (\ref{eq:spde}) satisfies the following conditions:
\begin{align}
\label{reaction}
 \begin{cases}
  \text{(i)}f \text{ has only three zeros }\pm 1, 0, & \\
  \text{(ii)}f'( \pm 1) =-p < 0,\  f'(0)=\mu > 0, & \\
  \text{(iii)}f(u)\leq C(1+|u|^q)\text{ with some }C,q>0,& \\
  \text{(iv)}f'(u)\leq c\text{ with some }c>0, & \\
  \text{(v)}f\text{ is odd,} &\\
  \text{(vi)}f(u)\leq -p(u-1) \ (u\geq 1).
 \end{cases}
\end{align}
The reaction term is bistable and has only $u=\pm 1$ as stable points from (i) and (ii). The existence of global solutions for the SPDEs (\ref{eq:spde2}) and (\ref{eq:spde}) with noises introduced above is insured by (iii). We need the assumption (iv) in order to use a comparison theorem by applying the maximal principle for the parabolic PDEs (See Section 2 of \cite{fr}). The condition (v) implies $\int _{-1}^{1}f(u)du=0$. We assume (vi) for a technical reason. For example, we can take $f(u)=u-u^3$. Throughout this paper, we set $C_f := \sup _{u \in [-2C_0,2C_0]} f'(u)$. 


\subsection{Main results}
At first, we show the generation of interface in $d$-dimensional case for $d\geq 2$. We get a following result for the generation of interface.
\begin{thm}
\label{thm1}
Let $u^\varepsilon$ be the solution of (\ref{eq:spde2}) which satisfy (\ref{reaction}) and (\ref{eq:ini2}). If there exist constants $C_1 >0$, $\kappa$ and $\alpha$ satisfying $\kappa > \alpha >\frac{1}{2}$, $\kappa >1$ and $\frac{\alpha}{\mu} + \frac{\kappa}{p} < C_1 < \frac{1}{\mu}$, then there exist positive constants $\tilde{\gamma} _d >0$ and, for all $\gamma \geq\tilde{\gamma} _d$, we have that
\begin{align}
&{\rm (i)}\lim _{\varepsilon \to 0}P(-1-\varepsilon ^\kappa \leq u^\varepsilon (x, C_1 \varepsilon | \log \varepsilon |) \leq 1+\varepsilon ^\kappa \ for \ all \ x\in D) =1\\
&{\rm (ii)}\lim _{\varepsilon \to 0}P(u^\varepsilon (x, C_1 \varepsilon | \log \varepsilon |) \geq 1-\varepsilon ^\kappa \ for \ x\in D \ such \ that \ u_0(x) \geq \varepsilon ^{1-C_1 \mu}) =1\\
&{\rm (iii)}\lim _{\varepsilon \to 0}P(u^\varepsilon (x, C_1 \varepsilon | \log \varepsilon |) \leq -1+\varepsilon ^\kappa \ for \ x\in D \ such \ that \ u_0(x) \leq -\varepsilon ^{1-C_1 \mu}) =1
\end{align}
\end{thm}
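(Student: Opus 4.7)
My plan is to transplant the classical Chen--Matano generation-of-interface argument for the deterministic Allen--Cahn equation to the SPDE (\ref{eq:spde2}) via the SPDE comparison theorem announced in Section~2. The idea is to sandwich $u^\varepsilon$ between two random sub- and super-solutions built from the ODE flow of $f$ together with explicit corrections that absorb $\Delta u_0$, $|\nabla u_0|^2$, and the driving noise $\varepsilon^\gamma\dot W^{Q_d}_t$. The smoothness of $Q_d$ in space is precisely what makes this transplantation work: pathwise, $W^{Q_d}_t$ is $C^\infty$ in $x$, so the random sub- and super-solutions retain sufficient regularity for the parabolic maximum principle to be applied realization by realization.

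The first step is to record the standard ODE estimates for $Y(\tau,\xi)$, the solution of $\dot Y=f(Y)$ with $Y(0,\xi)=\xi$. Linearization near $0$ with $f'(0)=\mu$ yields $Y(\tau,\xi)\approx\xi e^{\mu\tau}$ until $|Y|$ becomes of order one, at time $\tau_1(\xi)\sim\mu^{-1}\log(1/|\xi|)$; linearization near $\pm 1$ with $f'(\pm 1)=-p$ then gives $|Y(\tau,\xi)-\mathrm{sgn}(\xi)|\leq Ce^{-p(\tau-\tau_1)}$ for $\tau\geq\tau_1$. For the terminal time $\tau=C_1|\log\varepsilon|$ and threshold $|\xi|\geq\varepsilon^{1-C_1\mu}$, this two-phase estimate together with the inequalities $\kappa>\alpha>1/2$, $\kappa>1$ and $\alpha/\mu+\kappa/p<C_1<1/\mu$ yields $|Y(\tau,\xi)-\mathrm{sgn}(\xi)|\leq\tfrac12\varepsilon^\kappa$, leaving slack of order $\varepsilon^\alpha$ for the correction terms below. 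I would also need the pointwise bounds on $\partial_\xi Y$ and $\partial_\xi^2 Y$, which decay like $e^{-p(\tau-\tau_1)}$ once $Y$ is past the transition and grow at most like $e^{j\mu\tau}$ before it.

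Next I would take as super-solution candidate
\[
 w^+(t,x) \;=\; Y\!\Bigl(\tfrac{t}{\varepsilon}+\eta(t),\,u_0(x)\Bigr) \;+\; \varepsilon^\alpha h(t) \;+\; \varepsilon^\gamma W^{Q_d}_t(x),
\]
and symmetrically for $w^-$, with $\eta,h$ explicit nonnegative smooth functions to be tuned. Substituting into the parabolic-plus-noise operator $\partial_t-\Delta-\tfrac{1}{\varepsilon}f(\cdot)-\varepsilon^\gamma\dot W^{Q_d}_t$, the residual decomposes into (a) the Laplacian contribution $\partial_\xi Y\,\Delta u_0+\partial_\xi^2 Y\,|\nabla u_0|^2$; (b) a Taylor remainder $\tfrac{1}{\varepsilon}\bigl[f(Y)-f(Y+\varepsilon^\alpha h+\varepsilon^\gamma W^{Q_d}_t)\bigr]$, which once $Y$ is near $\pm 1$ reduces to $\tfrac{p}{\varepsilon}(\varepsilon^\alpha h+\varepsilon^\gamma W^{Q_d}_t)$ up to lower-order terms controlled by (iv) of (\ref{reaction}); and (c) the diffusion term $\varepsilon^\gamma\Delta W^{Q_d}_t$. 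The time-shift $\dot\eta\,f(Y)/\varepsilon$ and the linear growth $\varepsilon^\alpha\dot h$ produce two positive contributions to the residual that one adjusts to dominate (a) and (c). The constraint $C_1<1/\mu$ is precisely what keeps $\partial_\xi Y\leq\varepsilon^{-C_1\mu}\ll\varepsilon^{-1}$, so that the Taylor term (b) genuinely dominates (a) in the transition region.

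The main obstacle is the pathwise uniform control of $W^{Q_d}_t$, $\nabla W^{Q_d}_t$ and $\Delta W^{Q_d}_t$ on $\overline D\times[0,C_1\varepsilon|\log\varepsilon|]$. Their covariances are $Q_d$, $\partial_x\partial_y Q_d$ and $\partial_x^2\partial_y^2 Q_d$ respectively, which are smooth and, by assumption, positive on the diagonal, so all three are bona-fide non-degenerate smooth Gaussian fields. Applying the Borell--TIS inequality together with standard chaining for the Gaussian supremum gives
\[
 \sup_{0\leq t\leq C_1\varepsilon|\log\varepsilon|,\;x\in\overline D}\bigl(|W^{Q_d}_t(x)|+|\Delta W^{Q_d}_t(x)|\bigr) \;\leq\; C\varepsilon^{1/2}|\log\varepsilon|
\]
with probability tending to $1$ as $\varepsilon\to 0$. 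Choosing $\tilde\gamma_d$ large enough that $\varepsilon^\gamma$ times this bound is $o(\varepsilon^\alpha)$ for all $\gamma\geq\tilde\gamma_d$ makes the noise contribution to the residual negligible, and the SPDE comparison theorem then yields $w^-\leq u^\varepsilon\leq w^+$ on $[0,C_1\varepsilon|\log\varepsilon|]$, from which (i), (ii) and (iii) follow at once.
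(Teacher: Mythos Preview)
Your barrier construction differs from the paper's and, as written, cannot deliver the $\varepsilon^\kappa$ accuracy. The paper builds its sub/super solutions from the \emph{SDE} flow $Y^\varepsilon$ and places the correction in the $\xi$-argument,
\[
w_\varepsilon^\pm(t,x)=Y^\varepsilon\Bigl(\tfrac{t}{\varepsilon},\,u_0^\pm(x)\pm\varepsilon C_2\bigl(e^{\mu t/\varepsilon}-1\bigr),\,x\Bigr),
\]
so that at the terminal time the $\xi$-shift is $O(\varepsilon^{1-C_1\mu})$ yet $|Y^\varepsilon-(\pm 1)|\leq\varepsilon^\kappa$, because $Y_\xi$ has already decayed exponentially near the stable states (Propositions~\ref{thm32} and \ref{thm43}). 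Your additive ansatz $Y(\tau+\eta,u_0)+\varepsilon^\alpha h+\varepsilon^\gamma W^{Q_d}$ cannot achieve this: the Laplacian contribution (a) alone has size at least $|Y_\xi||\Delta u_0|\sim e^{\mu t/\varepsilon}$, so nonnegativity of the residual forces $\varepsilon^\alpha\dot h\gtrsim e^{\mu t/\varepsilon}$ and hence $\varepsilon^\alpha h(C_1\varepsilon|\log\varepsilon|)\gtrsim\varepsilon^{1-C_1\mu}\gg\varepsilon^\kappa$ since $\kappa>1$. The time-shift term $\dot\eta f(Y)$ cannot rescue this because $f(Y)$ changes sign with $u_0(x)$ and vanishes at $Y=0,\pm 1$; and your Taylor term (b) is \emph{negative} in the transition region where $f'(Y)\approx\mu>0$, so the claim that ``(b) genuinely dominates (a) in the transition region'' is backwards. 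The paper's $\xi$-correction avoids all of this because the residual factors through $Y_\xi>0$ (see the computation (\ref{est6-1})), reducing the problem to a bound on $A^{\varepsilon,\delta}=Y_{\xi\xi}^{\varepsilon,\delta}/Y_\xi^{\varepsilon,\delta}$ (Proposition~\ref{thm62}).

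There is a second gap: spatial smoothness of $W^{Q_d}_t(x)$ does not license the parabolic maximum principle ``realization by realization''. The obstruction is temporal, not spatial---$t\mapsto W^{Q_d}_t(x)$ is nowhere differentiable, so neither $u^\varepsilon$ nor your $w^\pm$ is classical in $t$, and the operator $\partial_t-\Delta-\varepsilon^{-1}f-\varepsilon^\gamma\dot W^{Q_d}_t$ cannot be evaluated pointwise. The paper treats this as the central technical issue: all of Section~\ref{sec5} mollifies $W^{Q_d}$ in time to $W^{(\delta)}$, applies the deterministic comparison to the smooth approximations $u^{\varepsilon,\delta}$ and $w^{\pm}_{\varepsilon,\delta}$, and then passes $\delta\to 0$ uniformly in $\varepsilon$ (Propositions~\ref{thm51} and \ref{thm52}). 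There is no SPDE comparison theorem in Section~\ref{sec3}; the comparison is only established after the approximation machinery of Sections~\ref{sec5}--\ref{sec6} is in place.
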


Next, we consider one-dimensional equation (\ref{eq:spde}). Before we state theorems, we mathematically formulate the generation of interface in one-dimension.
\begin{dfn}
\label{def21}
We say $u^\varepsilon (t,\cdot)$ generates an interface if there exist $\beta ,\  \kappa >0,\ C>0,\ K>0$ and $\bar{g}_1$, $\bar{g}_2(x) \in H^1(\mathbb{R})$ such that
 \begin{align}
 \label{gen}
  \begin{cases}
   \text{\rm{(i)} }| u ^\varepsilon (t,x) | \leq 1+\varepsilon ^\kappa \ (x \in [-1,1]), & \\
   \text{\rm{(ii)} }u ^\varepsilon (t,x) \geq 1-\varepsilon ^\kappa \  (x \in [-1,1] \text{ such that } u_0(x) \geq C \varepsilon ^{\beta} ), & \\
   \text{\rm{(iii)} }u ^\varepsilon (t,x) \geq -1+\varepsilon ^\kappa \  (x \in [-1,1] \text{ such that } u_0(x) \leq -C \varepsilon ^{\beta} ), & \\
   \text{\rm{(iv)} }| u ^\varepsilon (t,x) - 1 | \leq \varepsilon ^\kappa \bar{g}_1(x) \  (x\geq 1 ), & \\
   \text{\rm{(v)} }| u ^\varepsilon (t,x) + 1 | \leq \varepsilon ^\kappa \bar{g}_2(x) \  (x\leq  -1). & \\
 \end{cases}
\end{align}
\end{dfn}
Now we state generation and motion of interface as our main results.
\begin{thm}
\label{thm21}
Let $u^\varepsilon$ be the solution of (\ref{eq:spde}) which satisfies (\ref{reaction}) and (\ref{eq:ini}). If $\dot{W}^\varepsilon (t,x) := \varepsilon ^\gamma \dot{W}^Q (t,x)$ and there exist constants $C_1 >0$, $\kappa$ and $\alpha$ satisfying the same condition as in Theorem \ref{thm1}, then there exists a constant $\tilde{\gamma} >0$ and, for all $\gamma \geq\tilde{\gamma}$, we have that
\begin{align}
\lim _{\varepsilon \to 0} P(u^\varepsilon ( C_1 \varepsilon | \log \varepsilon | ,\cdot)\text{ generates an interface}) = 1,
\end{align}
and $\beta$ in (\ref{gen}) is taken as $\beta =1-C_1 \mu$.
\end{thm}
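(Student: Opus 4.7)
The plan is to split the spatial domain $\mathbb{R}$ into the interior interval $[-1,1]$, where the noise is active, and the two exterior half-lines $\{x > 1\}$ and $\{x < -1\}$. Since the support of $Q$ lies in $[-1,1] \times [-1,1]$, the covariance (\ref{cov}) forces $E[(W_t^Q(x))^2] = tQ(x,x) = 0$ for $|x| > 1$, so $W_t^Q(x) \equiv 0$ almost surely there, and $u^\varepsilon$ reduces to a solution of the deterministic reaction-diffusion equation $\dot u = \Delta u + \varepsilon^{-1} f(u)$ on each exterior half-line. The first three conditions of Definition \ref{def21} concern $x \in [-1,1]$ and will follow from the argument used for Theorem \ref{thm1}, while the last two describe spatial decay on $|x| \geq 1$ and require a deterministic comparison.

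For (i)-(iii) of (\ref{gen}), I would apply the construction used in the proof of Theorem \ref{thm1} to the one-dimensional equation (\ref{eq:spde}): build stochastic sub- and super-solutions $Y^\pm(t,x)$ from small perturbations of $u_0^\varepsilon$, driven by the ODE $\dot Y = \varepsilon^{-1} f(Y)$ with correction terms absorbing the smooth-in-space noise, and invoke the extended SPDE comparison theorem. Tracking $Y^\pm$ up to $t = C_1 \varepsilon |\log\varepsilon|$ produces the same pointwise bounds as in Theorem \ref{thm1}: (i) holds throughout $[-1,1]$, and (ii)-(iii) hold wherever $|u_0^\varepsilon(x)| \geq \varepsilon^{1-C_1\mu}$. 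This matches Definition \ref{def21} with $\beta = 1 - C_1 \mu$.

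For (iv) (the case (v) being symmetric by (\ref{reaction})(v)), I would work on the half-line $\{x \geq 1\}$ where the equation is deterministic. Using $f'(1) = -p < 0$ and (\ref{reaction})(vi), $f(u) \leq -p(u-1)$ for $u \geq 1$, define $\bar u(t,x) := 1 + \varepsilon^\kappa G(t,x)$, where $G$ solves the linear problem $\dot G = \Delta G - \varepsilon^{-1} p G$ on $\{x > 1\}$ with $G(0,\cdot) = g_1$ from (\ref{eq:ini})(iv) and a boundary datum at $x = 1$ supplied by Step~1. Once $\varepsilon$ is small enough that the quadratic Taylor remainder of $f$ at $1$ is absorbed by the factor $\varepsilon^\kappa$, $\bar u$ is a classical super-solution; a parallel sub-solution (mirroring the construction near $-1$) completes (iv). Setting $\bar g_1(x) := G(C_1 \varepsilon |\log \varepsilon|, x)$ and using parabolic smoothing together with the strong dissipation $-p/\varepsilon$, $\bar g_1$ inherits the $H^3$ regularity of $g_1$ and \emph{a fortiori} lies in $H^1(\mathbb{R})$ after trivial extension.

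The principal obstacle is the matching at $x = \pm 1$ between the stochastic interior analysis and the deterministic exterior comparison: the boundary datum fed into $G$ must be consistent with the pointwise bounds produced by $Y^\pm$, and the comparison principle must be applied globally across the regime change. The cleanest route, which I would pursue, is to assemble a single global super-solution on $\mathbb{R}$ coinciding with the $Y^\pm$-based construction on $[-1,1]$ and with $\bar u$ outside, glued with enough regularity to remain a valid super-solution for the full SPDE. Verifying this gluing, and in particular checking that $\bar g_1, \bar g_2 \in H^1(\mathbb{R})$ after it, is where the main technical work will lie.
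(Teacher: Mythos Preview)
Your overall plan---interior stochastic comparison plus exterior deterministic control---is sound, and you correctly identify that the noise vanishes for $|x|>1$ so that $Y^\varepsilon=Y$ there. But the paper takes a cleaner route that sidesteps the gluing you flag as the main difficulty. Instead of running the Theorem~\ref{thm1} construction with a constant $C_2$ and then patching on a separate exterior barrier, the paper builds a \emph{single} global pair of super/sub solutions on all of $\mathbb{R}$ by replacing the constant $C_2$ with a positive function $h\in H^3(\mathbb{R})$:
\[
w_\varepsilon^\pm(t,x)=Y^\varepsilon\!\left(\tfrac{t}{\varepsilon},\,u_0(x)\pm \varepsilon h(x)\bigl(e^{\mu t/\varepsilon}-1\bigr),\,x\right).
\]
On $[-1,1]$ one takes $h$ large enough (as $C_2$ was) so that $\mathcal{L}(w_{\varepsilon,\delta}^+)\ge0$ via the same Proposition~\ref{thm62}/Lemmas~\ref{lem64}--\ref{lem65} estimates; on $|x|\ge1$ one lets $h$ decay in $H^3$. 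Since $Y^\varepsilon=Y$ outside $[-1,1]$, the exterior bound drops out immediately:
\[
w_\varepsilon^+(t,x)-1\le u_0(x)-1+h(x)\bigl(\varepsilon^{1-C_1\mu}-\varepsilon\bigr)\le \varepsilon^\kappa g_1(x)+\varepsilon^{1-C_1\mu}h(x),
\]
and (iv)--(v) follow with $\bar g_1=g_1+h\in H^1$, the $\kappa$ in Definition~\ref{def21} being the existential one (so effectively $\min(\kappa,1-C_1\mu)$). No separate exterior PDE, no boundary matching.

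Your specific exterior barrier $\bar u=1+\varepsilon^\kappa G$ has a scaling mismatch that would bite before you reach the gluing. The interior comparison with the constant-$C_2$ super-solution only gives $u^\varepsilon(t,1)\le 1+O(\varepsilon^{1-C_1\mu})$, and since $\kappa>1>1-C_1\mu$ under the hypotheses of Theorem~\ref{thm1}, the boundary datum you must feed into $G$ at $x=1$ is of order $\varepsilon^{-(\kappa-(1-C_1\mu))}\to\infty$. The strong damping $-p/\varepsilon$ produces a boundary layer of width $O(\varepsilon^{1/2})$, so the resulting $\bar g_1$ picks up an $H^1$ norm blowing up like a negative power of $\varepsilon$, and (iv) of Definition~\ref{def21} fails as stated. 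This is repairable---use $\varepsilon^{1-C_1\mu}$ as the prefactor instead of $\varepsilon^\kappa$---but then you are essentially reinventing the $h(x)$ trick with extra steps. The paper's choice of a decaying $h$ in the perturbation of the initial datum is exactly the ``single global super-solution'' you gesture at in your last paragraph, realised in the simplest way.
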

Next we formulate the result of the motion of interface for a special $Q$-Brownian motion. Indeed we take $a(x) \dot{W} _t$ as $\dot{W} _t ^Q (x)$ in order to connect Theorem \ref{thm21} to  Theorem1.1 of \cite{f94}, p.135. The initial value in the following theorem is more general than \cite{f94}.
\begin{thm}
\label{thm22}
Let $u^\varepsilon$ be the solution of (\ref{eq:spde}) which satisfies (\ref{reaction}) and (\ref{eq:ini2}) and set $\bar{u}^\varepsilon (t,x) := u^\varepsilon (\varepsilon ^{-2\gamma -1} t,x)$. If $\dot{W}^\varepsilon (t,x) := \varepsilon ^\gamma a(x) \dot{W} _t$ and there exist constants $C_1 >0$, $\kappa$ and $\alpha$ satisfying the same condition as in Theorem \ref{thm1}, then there exists a constant $\tilde{\gamma} >0$ and, for all $\gamma \geq \tilde{\gamma}$, we have that
\begin{align}
P( \| \bar{u} ^\varepsilon (t,\cdot )-\chi _{\xi^\varepsilon _t}(\cdot )\| _{L^2(\mathbb{R})} \leq \delta \  for \ all \  t \in [C_1 \varepsilon | \log \varepsilon |,T]  ) \to 1\ \ \ (\varepsilon \to 0).
\end{align}
Here the distribution of the process $\xi^\varepsilon _t$ on $C([0,T], \mathbb{R})$ converges to that of $\xi _t$ weakly and $\xi_t$ obeys the SDE starting at $\xi_0$ (see (\ref{eq:ini}) for $\xi_0$);
\begin{align}
 d\xi_t = \alpha _1 a(\xi_t ) dB_t + \alpha _2 a(\xi_t)a'(\xi_t)dt,
\end{align}
with certain $\alpha _1$ and $\alpha _2 \in \mathbb{R}$, see \cite{f94}, p. 135.
\end{thm}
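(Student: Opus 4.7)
The plan is to combine the interface-generation result of Theorem \ref{thm21} with Funaki's motion-of-interface result \cite{f94} via the Markov property at the generation time.

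First, set $t_g := C_1 \varepsilon |\log\varepsilon|$. By Theorem \ref{thm21}, on an event $A_\varepsilon$ of probability tending to $1$ the profile $u^\varepsilon(t_g,\cdot)$ generates an interface in the sense of Definition \ref{def21} with $\beta = 1 - C_1\mu > 0$. Combining (i)--(v) of (\ref{gen}) with $\bar g_1,\bar g_2\in H^1(\mathbb R)$ and the compact support of $a$, a direct computation yields
\begin{align*}
\| u^\varepsilon(t_g, \cdot) - \chi_{\tilde\xi^\varepsilon}(\cdot) \|_{L^2(\mathbb R)}^2 \leq C\varepsilon^{\beta} + O(\varepsilon^{2\kappa}),
\end{align*}
where $\tilde\xi^\varepsilon$ is a random point lying inside the transition region of $u_0^\varepsilon$ of width $O(\varepsilon^\beta)$; since that region shrinks around $\xi_0$, we get $\tilde\xi^\varepsilon \to \xi_0$ in probability as $\varepsilon\to 0$.

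Second, I would use the rescaling $\bar u^\varepsilon(t,x) = u^\varepsilon(\varepsilon^{-2\gamma-1}t,x)$: the macroscopic counterpart of $t_g$ is $\tau_g := \varepsilon^{2\gamma+1}t_g = O(\varepsilon^{2\gamma+2}|\log\varepsilon|) \to 0$. Applying the Markov property at time $t_g$ and invoking Theorem 1.1 of \cite{f94} with the random initial condition $u^\varepsilon(t_g,\cdot)$ (which on $A_\varepsilon$ is close to $\chi_{\tilde\xi^\varepsilon}$ in $L^2$ and has $H^1$-controlled tails for $|x|\geq 1$), one produces a process $\xi^\varepsilon_t$ with $\xi^\varepsilon_{\tau_g} = \tilde\xi^\varepsilon$ for which the desired $L^2$ estimate holds on $[\tau_g,T]$, and hence a fortiori on $[C_1\varepsilon|\log\varepsilon|,T]$. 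The weak convergence of $\xi^\varepsilon_{\cdot}$ to the SDE starting at $\xi_0$ then follows from \cite{f94} together with $\tilde\xi^\varepsilon\to\xi_0$; the coefficients $\alpha_1,\alpha_2$ are inherited unchanged from \cite{f94}.

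The main obstacle is that the generation result delivers only a crude $\pm 1$-approximation of the profile, whereas the admissible initial class in \cite{f94} typically requires closeness to the standing-wave shape $m\bigl((x-\tilde\xi^\varepsilon)/\varepsilon\bigr)$. To bridge this gap I would introduce a short relaxation window: after $t_g$ let the equation evolve for an additional time $\Delta t = O(\varepsilon|\log\varepsilon|)$, during which the interior layer smooths out to the travelling-wave profile (a deterministic PDE estimate based on comparison with the one-dimensional standing wave, as in \cite{kl}), while the stochastic contribution is dominated in sup norm by $\varepsilon^\gamma\sqrt{\Delta t}$ and is therefore negligible provided $\gamma\geq\tilde\gamma$ is chosen large enough. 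Finally, assumptions (iv)--(v) of (\ref{eq:ini}) together with the strong damping (vi) of (\ref{reaction}) and the compact support of $a$ propagate the $H^1$ tail bounds beyond $t_g+\Delta t$, ensuring that the hypotheses of \cite{f94} are met and closing the argument.
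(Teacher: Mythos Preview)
Your overall scaffolding --- generate the interface, restart via the Markov property, then feed into Funaki's motion result \cite{f94} --- matches the paper's. But the paper handles the ``bridging'' step differently, and more cleanly, than your proposed relaxation window.

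After the generation time $t_g = C_1\varepsilon|\log\varepsilon|$, the paper does \emph{not} try to show that $u^\varepsilon$ itself relaxes to a standing-wave profile. Instead it constructs \emph{new} super- and sub-solutions $w_\varepsilon^\pm$ that solve the same SPDE (with the same noise $\varepsilon^\gamma a(x)\dot W_t$) but whose initial data are \emph{already} in standing-wave form:
\[
w_\varepsilon^\pm(0,x) = m\!\bigl(\varepsilon^{-1/2}(x-\xi_0 \pm C\varepsilon^{1-C_1\mu})\bigr) \pm \varepsilon^\kappa \quad (x\in[-1,1]),
\]
extended by $\pm 1 + \varepsilon^\kappa \tilde g_i(x)$ with $\tilde g_i\in H^1$ for $|x|\ge 1$. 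The shape of the generated interface (Definition~\ref{def21}) is exactly what is needed to pick $\tilde g_i$ so that $w_\varepsilon^-(0,\cdot)\le u^\varepsilon(t_g,\cdot)\le w_\varepsilon^+(0,\cdot)$. Comparison (via the smooth-approximation machinery of Section~\ref{sec5}) then gives $w_\varepsilon^-\le u^\varepsilon\le w_\varepsilon^+$ on the whole macroscopic time window. Since each $w_\varepsilon^\pm$ starts $O(\varepsilon^\kappa)$-close in $H^1$ to a shifted standing wave, the hypotheses of \cite{f94} apply directly to the \emph{barriers}, not to $u^\varepsilon$; their interface positions differ initially only by $O(\varepsilon^{1-C_1\mu})$, so both converge weakly to the same SDE started at $\xi_0$, and $u^\varepsilon$ is squeezed.

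This buys you exactly the step you flagged as the main obstacle: you never have to prove that a piecewise-$\pm1$ profile relaxes to $m$ under the SPDE dynamics. Your relaxation-window argument is plausible but is left as a sketch (``a deterministic PDE estimate \dots\ as in \cite{kl}''); making it rigorous essentially amounts to re-deriving the comparison with standing-wave barriers that the paper uses directly. Note also a scaling slip: for the equation $\dot u = \Delta u + \varepsilon^{-1}f(u)$ the standing wave is $m(\varepsilon^{-1/2}(x-\xi))$, not $m((x-\xi)/\varepsilon)$.
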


These two results imply that the interface is formed by the early time of order $O(\varepsilon |\log \varepsilon |)$ and afterward move in the time scale of order $O(\varepsilon ^{-2\gamma -1})$. This time scale is same as that of Funaki's result \cite{f94} in the case that the external noise is $\varepsilon ^\gamma a(x) W_t$ (\cite{f94} discussed not only $\varepsilon ^\gamma a(x) W_t$ but also $\varepsilon ^\gamma a(x) W_t ^h$ where $W_t ^h$ is a $Q$-Brownian motion which has a Riesz potential kernel as a covariance operator).

The proofs of these results are based on the methods of Alfaro et al \cite{ham}. They computed the first generation time for the multi-dimensional case with non-random deterministic external force. Their main idea was to construct super and sub solutions of the Allen-Cahn equation;
\begin{align}
\label{eq:pde}
\begin{cases}
\dot{u} ^\varepsilon (t,x) &= \Delta u ^\varepsilon (t,x)+\displaystyle{\frac{1}{\varepsilon}} f(u ^\varepsilon (t,x) ) + g^\varepsilon (t,x),\ \ \ t> 0,\  x\in D \subset \mathbb{R} ^d,\\
u^\varepsilon (0,x) &= u_0 (x),\ \ \ x\in D,\\
\frac{\partial u}{\partial \nu} (t,x)&= 0,\ \ \ t> 0,\  x\in \partial D,
\end{cases}
\end{align}
where $D$ is a domain with a smooth boundary. For simplicity, we assume that $g^\varepsilon \equiv 0$ and $u_0$ satisfies Neumann boundary condition. In a very short time, the effect of the diffusion term, that is $\Delta u^\varepsilon$, is negligible compared with the other. Thus they considered an ODE:
\begin{align}
\begin{cases}
\label{eq:ode}
\dot{Y} (\tau , \xi) = f( Y(\tau , \xi )),\ \ \  \tau >0,\\
Y(0, \xi)=\xi \in [-2C_0 ,2C_0],
\end{cases}
\end{align}
(see (\ref{eq:ini}) for the constant $C_0$), took a positive constant $C_2 >0$ and defined
\begin{align}
w_\varepsilon ^\pm (t,x) = Y \left( \frac{t}{\varepsilon} , u_0 (x) \pm \varepsilon  C_2 (e^\frac{\mu t}{\varepsilon }-1 ) \right)
\end{align}
where $\mu$ is defined in (\ref{reaction}), and proved that $w_\varepsilon ^- \leq u^\varepsilon \leq w_\varepsilon ^+$ by applying a comparison theorem for parabolic PDEs until the time of order $O(\varepsilon |\log \varepsilon |)$. And they proved that $w_\varepsilon ^\pm$ formed the interfaces. If $g^\varepsilon$ is more general or $u_0$ does not satisfy Neumann boundary condition, we need to modify these arguments slightly. After the generation of interfaces, the diffusion term becomes much larger, and balances with the reaction term. Thus they took another super and sub solutions and connect to the interface motion which called the motion by mean curvature with the effect of non-random external force.

In this paper, we construct super and sub solutions, derive the first generation time and prove the motion of interface in the case that the noise is $Q$-Brownian motion. In particular, we show after the time $\frac{1}{2\mu} \varepsilon |\log \varepsilon |$, which is generation time in \cite{ham}, that a long time scale of order $O(\varepsilon ^{-2\gamma-1})$ is proper because of the effect of noise. And we connect to the results of \cite{f} by using the strong Markov property. Moreover, in \cite{ham}, they constructed super and sub solutions applying directly the maximal principle for PDE, however, this is not straightforward because the solution is singular in a time variable. And thus, we prove the comparison theorem of SPDE by approximating solutions smoothly instead. These are the major difference from the result of \cite{ham}.

We first prepare some estimates in Section \ref{sec3}. In Section \ref{sec4}, we find the SDE which corresponds to SPDEs (\ref{eq:spde2}) and (\ref{eq:spde}), and prove that the solution of this SDE is close to that of ODE (\ref{eq:ode}) in order to use the results of the PDE case. We extend the comparison theorem of PDE to SPDE. For this purpose, we smoothly and uniformly approximate the solution of SPDE in Section \ref{sec5}. In Section \ref{sec6}, we apply the comparison theorem to the approximated solution which is constructed in Section \ref{sec5}, and we construct super and sub solutions of $u^\varepsilon$. At last, we give the proof of Theorem \ref{thm1} in Section \ref{sec8} and the proofs of Theorem \ref{thm21} and Theorem \ref{thm22} in Section \ref{sec7}.

\section{Auxiliary results}
\label{sec3}
From this section, we consider the case that the external noise $\dot{W} _t ^\varepsilon (x)$ is $\varepsilon ^\gamma \dot{W} _t ^{Q_d} (x)$ or $\varepsilon ^\gamma \dot{W} _t ^{Q} (x)$ which are defined in Section \ref{sec1}. 

At first, we refer to the conditions of the solutions of (\ref{eq:spde2}) and (\ref{eq:spde}) $u^\varepsilon$; see Section 2 of \cite{f} or Theorem 3.1 of \cite{f94}.

\begin{pro}
\label{thm31}
If $|u^\varepsilon _0 (x)|\leq K$ and $u^\varepsilon$ is the solution of SPDE (\ref{eq:spde2}) (resp. (\ref{eq:spde})), then
\begin{align}
\lim_{\varepsilon \to 0} P\left ( |u^\varepsilon  (t,x)| \leq \max \{K,1\} +\delta,\ t \in [0,\varepsilon ^{-n}],\ x\in D\ (resp. \ x\in \mathbb{R}) \right ) =1,
\end{align}
for all $n \in \mathbb{N}$ and $\delta >0$.
\end{pro}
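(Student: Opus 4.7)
The plan is to split $u^\varepsilon = v^\varepsilon + z^\varepsilon$, where $z^\varepsilon$ is an Ornstein--Uhlenbeck process carrying all the stochasticity, and then apply the deterministic comparison theorem to the random parabolic equation for $v^\varepsilon$. I would define $z^\varepsilon$ as the mild solution of $\dot{z}^\varepsilon = (\Delta - 1) z^\varepsilon + \varepsilon^\gamma \dot{W}_t^{Q_d}$ with zero initial value and Neumann boundary condition; the dissipative shift $-1$ ensures that $z^\varepsilon$ has an asymptotically stationary law with uniformly bounded variance, which is needed because on a bounded Neumann domain the bare stochastic heat equation has a zero mode that grows in $t$. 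Then $v^\varepsilon := u^\varepsilon - z^\varepsilon$ satisfies, pathwise,
\begin{equation*}
\dot{v}^\varepsilon = \Delta v^\varepsilon + \tfrac{1}{\varepsilon} f(v^\varepsilon + z^\varepsilon) + z^\varepsilon, \qquad v^\varepsilon(0,\cdot) = u_0,
\end{equation*}
with Neumann boundary, which is a PDE of the type (\ref{eq:pde}) as soon as the sample path of $z^\varepsilon$ is treated as a given smooth function.

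The next task is a sup-norm estimate on $z^\varepsilon$. Because $Q_d$ is $C^\infty$ and compactly supported, $z^\varepsilon(t,\cdot)$ is a Gaussian random field whose pointwise variance, together with that of its spatial derivatives up to any fixed order, is bounded by $C \varepsilon^{2\gamma}$ uniformly in $t \geq 0$. Kolmogorov's continuity criterion (or the Borell--TIS inequality) applied to $(t,x) \mapsto z^\varepsilon(t,x)$ on a unit time slab gives a sub-Gaussian estimate of the form
\begin{equation*}
P\bigl(\|z^\varepsilon\|_{L^\infty([k,k+1] \times \overline{D})} > \eta\bigr) \leq \exp(-c \eta^2 / \varepsilon^{2\gamma})
\end{equation*}
for each $k$ and any $\eta > 0$; summing over $k = 0, 1, \dots, \lceil \varepsilon^{-n} \rceil$ and absorbing the polynomial factor in the exponential decay in $\varepsilon^\gamma$, one obtains
\begin{equation*}
P\bigl(\|z^\varepsilon\|_{L^\infty([0,\varepsilon^{-n}] \times \overline{D})} > \eta\bigr) \longrightarrow 0 \qquad (\varepsilon \to 0)
\end{equation*}
for every $\eta > 0$ and every $n \in \mathbb{N}$.

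On the high-probability event $A_\varepsilon := \{\|z^\varepsilon\|_\infty \leq \eta\}$ with $\eta$ chosen much smaller than $\delta$, I would apply the deterministic comparison theorem to the equation for $v^\varepsilon$ with $M := \max\{K,1\} + \delta/2$. The constant $M$ is a supersolution: condition (vi) of (\ref{reaction}) yields
\begin{equation*}
-\tfrac{1}{\varepsilon} f(M + z^\varepsilon) - z^\varepsilon \;\geq\; \tfrac{p}{\varepsilon}(M - 1 - \eta) - \eta \;>\; 0
\end{equation*}
for $\varepsilon$ small; symmetrically, using oddness (v) together with (vi), the constant $-M$ is a subsolution, while $|v^\varepsilon(0,\cdot)| = |u_0| \leq K \leq M$. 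The maximum principle (Section 2 of \cite{fr}) then gives $|v^\varepsilon| \leq M$ on $[0,\varepsilon^{-n}] \times \overline{D}$, hence $|u^\varepsilon| \leq M + \eta \leq \max\{K,1\} + \delta$ on $A_\varepsilon$. The main obstacle is the second step: obtaining a sup-norm tail bound on $z^\varepsilon$ that is uniform over the polynomially long time interval $[0, \varepsilon^{-n}]$. The smoothness of $Q_d$ in space is exactly what makes this step tractable, since it makes $z^\varepsilon$ arbitrarily smooth in $x$ and permits a Borell--TIS estimate over a compact parameter set after a slab decomposition in time.
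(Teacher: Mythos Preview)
The paper does not give its own proof of this proposition; it is stated as an auxiliary result and the reader is referred to Section~2 of \cite{f} and Theorem~3.1 of \cite{f94}. Your strategy---subtracting a linear Ornstein--Uhlenbeck process $z^\varepsilon$, controlling $\|z^\varepsilon\|_\infty$ over $[0,\varepsilon^{-n}]$ by Gaussian tail bounds and a slab decomposition in time, and then applying a deterministic comparison/maximum-principle argument to the remainder $v^\varepsilon$---is precisely the method used in those references. So your approach is correct and coincides with the one the paper implicitly invokes.

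Two minor remarks. First, the one-dimensional problem (\ref{eq:spde}) is posed on $\mathbb{R}$ with conditions at $\pm\infty$ rather than on a bounded Neumann domain, so your comment about the zero mode and the shift $(\Delta-1)$ does not apply there; since $Q$ is compactly supported the stochastic convolution with the whole-line heat kernel already has uniformly bounded variance in $t$, and the rest of the argument goes through unchanged. Second, your supersolution check uses condition~(vi) of (\ref{reaction}) at the point $M+z^\varepsilon$, which requires $M+z^\varepsilon\geq 1$; this is guaranteed once $\eta<\delta/2$, which you implicitly assume but may want to state explicitly.
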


From this result, we see that the solution $u^\varepsilon$ takes value in the interval $[-2C_0 ,2C_0]$ with high probability. By introducing stopping times
\begin{align}
\label{bddst}
&\tau _1 :=\inf \{t >0 |  |u^\varepsilon  (t,x)| > 2C_0\ for\ some\ x\in D \},\\
&\bar{\tau} _1 :=\inf \{t >0 |  |u^\varepsilon  (t,x)| > 2C_0\ for\ some\ x\in \mathbb{R} \},
\end{align}
we can characterize Proposition \ref{thm31} as $P(\tau _1 >\varepsilon ^{-n}) \to 1$ and $P(\bar{\tau} _1 >\varepsilon ^{-n}) \to 1$ ($\varepsilon \to 0$).

Next we refer to some preliminary results of ODE (\ref{eq:ode}):
\begin{align}
\begin{cases}
\dot{Y} (\tau , \xi) = f( Y(\tau , \xi )),\ \ \  \tau >0,\nonumber \\
Y(0, \xi)=\xi \in [-2C_0 ,2C_0]. \nonumber
\end{cases}
\end{align}
See \cite{kl} for the detailed proofs.

\begin{pro}
\label{thm32}
For any $\alpha >0$ and $\kappa>0$, there exists a positive constant $C_1> \frac{\alpha}{\mu} + \frac{\kappa}{p}$ such that
\begin{align}
&|Y(C_1|\log \varepsilon | ,\xi )-1 |\leq \varepsilon ^\kappa \ \ \ for\ all\ \xi \in [\varepsilon ^\alpha ,2C_0]\\
&|Y(C_1|\log \varepsilon | ,\xi ) +1 |\leq \varepsilon ^\kappa \ \ \ for\ all\ \xi \in [-2C_0 ,-\varepsilon ^\alpha ]
\end{align}
for sufficiently small $\varepsilon >0$ where $\mu$ and $p$ are defined in (\ref{reaction}). The constant $C_1$ can be taken depending only on $\alpha$, $\kappa$ and $f$.
\end{pro}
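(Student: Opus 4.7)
The plan is a phase-line analysis of the scalar autonomous ODE $\dot Y = f(Y)$. By the oddness in (v), it suffices to treat $\xi \in [\varepsilon^\alpha, 2C_0]$ and prove $|Y(C_1|\log\varepsilon|,\xi) - 1| \leq \varepsilon^\kappa$; the case $\xi \in [-2C_0, -\varepsilon^\alpha]$ then follows from the symmetry $Y(\tau, -\xi) = -Y(\tau, \xi)$.

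First I would split the analysis at the stable equilibrium $u=1$. For $\xi \in [1, 2C_0]$, condition (vi) forces $\dot Y \leq -p(Y-1)$ as long as $Y \geq 1$, so Gr\"onwall gives $0 \leq Y(\tau,\xi) - 1 \leq (2C_0 - 1)e^{-p\tau}$, which is below $\varepsilon^\kappa$ for any $\tau \geq (\kappa/p)|\log \varepsilon| + O(1)$.

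For $\xi \in [\varepsilon^\alpha, 1)$ the trajectory stays in $(0,1)$ where $f>0$ by (i), hence is strictly increasing, and separation of variables gives $\tau = \int_\xi^{Y(\tau,\xi)} du/f(u)$. I would split the integral into a piece near $0$ and a piece near $1$. The linearisation $f'(0)=\mu$ yields $f(u) \geq (\mu/2)u$ on a fixed neighbourhood $[0,u_0]$, so the time to travel from $\xi$ up to the threshold $1/2$ is bounded by $(\alpha/\mu)|\log\varepsilon|+K_1$ for an $f$-dependent constant $K_1$ (the contribution on $[u_0,1/2]$ is $O(1)$ because $1/f$ is integrable there). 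Similarly $f'(1)=-p$ yields $f(u) \geq (p/2)(1-u)$ near $1$, so the time to travel from $1/2$ to $1-\varepsilon^\kappa$ is bounded by $(\kappa/p)|\log\varepsilon|+K_2$. Adding these, for any fixed $C_1 > \alpha/\mu + \kappa/p$ and $\varepsilon$ small enough, $C_1|\log\varepsilon|$ exceeds the total travel time uniformly in $\xi \in [\varepsilon^\alpha, 1)$, so $Y(C_1|\log\varepsilon|,\xi) \geq 1-\varepsilon^\kappa$; combined with the case $\xi \in [1, 2C_0]$ this proves the bound for positive $\xi$.

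There is no serious obstacle here: the argument is essentially a textbook hyperbolic-fixed-point computation, exploiting that $0$ is an unstable equilibrium with rate $\mu$ (giving the $\alpha/\mu$ contribution, as the exit time from an $\varepsilon^\alpha$-neighbourhood of $0$ scales as $\alpha|\log\varepsilon|/\mu$) and $\pm 1$ are stable equilibria with rate $p$ (giving the $\kappa/p$ contribution). The only mildly delicate point is the bookkeeping of the $O(1)$ remainders in the two integrals; these are absorbed into the strict inequality $C_1 > \alpha/\mu + \kappa/p$, which is why the constant is chosen strictly larger than this sum.
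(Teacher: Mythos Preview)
The paper does not give a proof of this proposition; it simply refers to \cite{kl} for the details. Your phase-line/travel-time approach is correct and is the standard argument for this kind of statement.

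One imprecision is worth flagging. You write that the lower bound $f(u) \geq (\mu/2)u$ on $[0,u_0]$ gives a travel time from $\xi = \varepsilon^\alpha$ up to $u_0$ of at most $(\alpha/\mu)|\log\varepsilon| + K_1$. But that lower bound only yields
\[
\int_\xi^{u_0} \frac{du}{f(u)} \;\leq\; \frac{2}{\mu}\log\frac{u_0}{\xi} \;=\; \frac{2\alpha}{\mu}|\log\varepsilon| + O(1),
\]
off by a factor of two. To recover the sharp coefficient $\alpha/\mu$ (which matters, since the application in Theorem~\ref{thm1} also requires $C_1 < 1/\mu$, so you really want \emph{every} $C_1 > \alpha/\mu + \kappa/p$ to work), use that $f \in C^2$ with $f'(0)=\mu$ implies $\frac{1}{f(u)} - \frac{1}{\mu u} = O(1)$ as $u \to 0^+$, hence
\[
\int_\xi^{u_0} \frac{du}{f(u)} \;=\; \frac{1}{\mu}\log\frac{u_0}{\xi} + O(1) \;=\; \frac{\alpha}{\mu}|\log\varepsilon| + O(1).
\]
The analogous refinement near $u=1$ (using $f'(1)=-p$) gives the coefficient $\kappa/p$ rather than $2\kappa/p$. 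With this correction your argument is complete.
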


Here we derive the estimate of two solutions of ODE (\ref{eq:ode}) as a corollary of Lemma 2.1 of \cite{kl}. We use this estimate in Section \ref{sec4}.

\begin{cor}
\label{cor31}
For all $\alpha >0$, $\beta >0$ and $\delta \geq \frac{2\alpha C_f}{\mu}$, there exists a positive constant $C>0$ such that\\
{\rm (i)} if $\xi \in [\varepsilon ^\alpha ,2C_0-\varepsilon ^{\beta +\delta}]$  then for sufficiently small $\varepsilon >0$,
\begin{align}
Y(\tau ,\xi+\varepsilon ^{\beta +\delta}) - Y(\tau ,\xi)  \leq \varepsilon ^\beta \ \ \ for\ all\ \tau \in [0,C|\log \varepsilon |].
\end{align}
{\rm (ii)} if $\xi \in [-2C_0+\varepsilon ^{\beta +\delta} , - \varepsilon ^\alpha]$  then for sufficiently small $\varepsilon >0$,
\begin{align}
Y(\tau ,\xi) - Y(\tau ,\xi - \varepsilon ^{\beta +\delta})  \leq \varepsilon ^\beta \ \ \ for\ all\ \tau \in [0,C|\log \varepsilon |].
\end{align}
\end{cor}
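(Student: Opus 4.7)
The plan is to write the difference $Z(\tau) := Y(\tau,\xi+\varepsilon^{\beta+\delta}) - Y(\tau,\xi)$ as the solution of a linearization of (\ref{eq:ode}), apply Gronwall, and then exploit the bistable structure: once both trajectories have been attracted close to $+1$, the linear coefficient $f'$ becomes negative and the difference stops growing. The corollary is then essentially an application of Lemma 2.1 of \cite{kl} to this linearized equation.

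By the mean value theorem, there exists $\eta(s)$ between $Y(s,\xi)$ and $Y(s,\xi+\varepsilon^{\beta+\delta})$ with $\dot Z(s) = f'(\eta(s))\,Z(s)$, and since $Z(0) = \varepsilon^{\beta+\delta}$ this integrates to
\begin{align*}
Z(\tau) = \varepsilon^{\beta+\delta}\,\exp\Bigl(\int_0^\tau f'(\eta(s))\,ds\Bigr).
\end{align*}
Next, apply Proposition \ref{thm32} to the two trajectories (whose starting values lie in $[\varepsilon^\alpha, 2C_0]$) to obtain a crossover time $\tau_\ast \leq \frac{\alpha}{\mu}|\log\varepsilon| + O(1)$ after which both $Y(s,\xi)$ and $Y(s,\xi+\varepsilon^{\beta+\delta})$ are within $\varepsilon^\kappa$ of $+1$. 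On $[0,\tau_\ast]$ use the uniform bound $f'(\eta(s)) \leq C_f$, while on $[\tau_\ast, C|\log\varepsilon|]$ the point $\eta(s)$ is so close to $1$ that $f'(\eta(s)) \leq -p/2 < 0$ by continuity and $f'(1) = -p$. Combining these estimates,
\begin{align*}
\int_0^\tau f'(\eta(s))\,ds \;\leq\; C_f\,\tau_\ast \;\leq\; \tfrac{\alpha C_f}{\mu}|\log\varepsilon| + O(1),
\end{align*}
so $Z(\tau) \leq \varepsilon^{\beta+\delta - \alpha C_f/\mu}$ up to multiplicative constants. This is dominated by $\varepsilon^\beta$ as soon as $\delta \geq 2\alpha C_f/\mu$, the factor of two absorbing the $O(1)$ buffer in $\tau_\ast$ and giving a generous range of $\tau$ for which the bound holds. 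Part (ii) then follows by reflecting: setting $\widetilde Y(\tau,\xi) := -Y(\tau,-\xi)$, the odd symmetry (v) in (\ref{reaction}) shows that $\widetilde Y$ also solves (\ref{eq:ode}), and applying (i) to $\widetilde Y$ with $\widetilde\xi = -\xi \in [\varepsilon^\alpha, 2C_0 - \varepsilon^{\beta+\delta}]$ yields (ii) directly.

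The main technical step I expect to grind out is the precise control of $\tau_\ast$: in particular, one needs to verify that the lower trajectory $Y(s,\xi)$ remains in $[\varepsilon^\alpha/2, 2C_0]$ (say) throughout an interval on which Proposition \ref{thm32} is applicable, and that the constant $C$ in the statement can be chosen independently of $\xi$ in the stated range. Both are routine once one knows that $Y(\tau,\xi)$ is monotone along trajectories between the equilibria (a consequence of $f$ having only $\pm 1, 0$ as zeros), so the restriction $\tau \leq C|\log\varepsilon|$ presents no genuine difficulty; the heart of the argument is the splitting of $\int f'(\eta)\,ds$ around $\tau_\ast$ described above.
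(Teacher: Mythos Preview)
Your approach is essentially the paper's: split at a crossover time after which both trajectories lie in the neighborhood of $1$ where $f'<0$, bound the growth of the difference by $e^{C_f\tau}$ via Gronwall before crossover, and observe the difference is nonincreasing thereafter (the paper also handles (ii) by symmetry, though implicitly). The only discrepancy is the crossover-time citation: the paper invokes Lemma~2.1 of \cite{kl} to obtain $T_\varepsilon = \tfrac{2\alpha}{\mu}|\log\varepsilon|$ directly---this is what produces the factor $2$ in the hypothesis $\delta \geq \tfrac{2\alpha C_f}{\mu}$---whereas Proposition~\ref{thm32} yields only $\tau_\ast \leq C_1|\log\varepsilon|$ for any fixed $C_1>\tfrac{\alpha}{\mu}$ (not $\tfrac{\alpha}{\mu}|\log\varepsilon|+O(1)$, and not with $\varepsilon^\kappa$-closeness at that time), but either bound is enough for the argument.
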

\begin{proof}
We only prove (i). At first we take $\eta >0$ small enough so that $f'(u)<0$ holds if $u \in [1-\eta , 1+\eta]$. From Lemma 2.1 of \cite{kl}, $Y(\tau ,\xi)$ is in the interval $[1-\eta , 1+\eta]$ by the time $T_\varepsilon := \frac{2\alpha}{\mu} |\log \varepsilon |$. After the time $T_\varepsilon$, $\dot{Y}(\tau ,\xi+\varepsilon ^{\beta +\delta}) - \dot{Y}(\tau ,\xi )=f(Y(\tau ,\xi+\varepsilon ^{\beta +\delta})) - f(Y(\tau ,\xi ))$ keeps the sign negative. Thus $Y(\tau ,\xi+\varepsilon ^{\beta +\delta}) - Y(\tau ,\xi)$ declines but does not touch 0 after $T_\varepsilon$. We need to consider the behavior of $Y(\tau ,\xi+\varepsilon ^{\beta +\delta}) - Y(\tau ,\xi)$ before the time $T_\varepsilon$. In the case of $\xi \in [\varepsilon ^\alpha , 1-\eta]$, we obtain 
\begin{align}
\label{est3-2}
Y(\tau ,\xi+\varepsilon ^{\beta +\delta}) - Y(\tau ,\xi) \leq \varepsilon ^{\beta +\delta}\exp (C_f \tau) \leq \varepsilon ^{\beta +\delta - \frac{2\alpha C_f}{\mu}} \leq \varepsilon ^\beta \ \ \ for\  all\  \tau \in [0,T_\varepsilon ]
\end{align}
from Gronwall's inequality. It is easy to show (\ref{est3-2}) in the case of $\xi \in [1+\eta ,2C_0]$ because the time at which $Y(\tau ,2C_0)$ is in the interval $[1-\eta , 1+\eta]$ is independent of $\varepsilon$. And thus, we obtain the same estimate as (\ref{est3-2}).
\end{proof}

\section{Estimates for SDE}
\label{sec4}
In this section, we prove some estimates for solutions of SDE, the estimates which we often use in this paper. We change the variable $t$ to $\varepsilon \tau$. In order to construct the solutions, we consider the solution of SDE;
\begin{align}
\label{sde}
\begin{cases}
\dot{Y} ^\varepsilon (\tau, \xi ,x) = f( Y ^\varepsilon (\tau, \xi ,x )) + \varepsilon ^{\gamma + \frac{1}{2}} \dot{\widetilde{W}}_\tau ^{Q_d} (x),\ \ \  \tau \in (0,\infty ),\\
Y ^\varepsilon (0, \xi ,x)=\xi \in [-2C_0 ,2C_0],
\end{cases}
\end{align}
where $\widetilde{W}_\tau ^{Q_d} (x) := \varepsilon ^{-\frac{1}{2}} W_{\varepsilon \tau} ^{Q_d} (x)$ is a $Q_d$-Brownian motion in law sense. Moreover this process equals to $\sqrt{Q_d (x,x)} W_\tau$ in law sense for each $x\in D$. For simplicity, we denote $\widetilde{W}_\tau ^{Q_d} (x)$ as $W_\tau (x)$. In this section, we prove that $Y^\varepsilon$ stays close to $Y$ which is the unique solution of the ODE (\ref{eq:ode}) for a long time with high probability, and construct the super and sub solutions of the SPDE (\ref{eq:spde2}) using $Y^\varepsilon$. Now let $\delta >0$ be the small positive constant such that $f'(x)<0 $ for all $x \in [1-\delta,1+\delta]$, and let $\tau _\varepsilon$ be a stopping time defined by
\begin{align}
\tau _\varepsilon ^p:= \inf \{ \tau >0 | \| Y^\varepsilon (\tau \wedge \tau_{\varepsilon} , \xi ,\cdot) - Y (\tau \wedge \tau_{\varepsilon} , \xi)  \|_{W^{1,2p}(D)} >\varepsilon ^{\kappa} \},
\end{align}
for each $\xi \in [-2C_0,2C_0]$. And we define a deterministic time as $T_\varepsilon := \frac{1}{\mu} |\log \varepsilon |$. Especially, $Y^\varepsilon$ equals to $Y$ if $x \in \mathbb{R} ^d \backslash D$. Thus we only consider the case that $x \in D$.

\begin{lem}
\label{lem41}
If $p\in \mathbb{N}$ satisfies $p > \frac{d}{4}$, then the estimate
\begin{align}
E\left [ \| Y^\varepsilon (\tau_\varepsilon \wedge T_\varepsilon ,\xi ,\cdot ) - Y(\tau_\varepsilon \wedge T_\varepsilon ,\xi ) \|_{L^{2p}(D)} ^{2p} \right ] \leq O(\varepsilon ^{2\gamma +1 + \kappa (2p-2) -\frac{2C_f p}{\mu}} |\log \varepsilon |)
\end{align}
holds for sufficiently small $\varepsilon >0$.
\end{lem}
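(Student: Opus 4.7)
The plan is to difference the SDE (\ref{sde}) and the ODE (\ref{eq:ode}) pointwise in $x$, apply It\^{o}'s formula to the $L^{2p}$-norm of the difference, and close the resulting estimate with Gronwall. Set $Z^\varepsilon(\tau,x) := Y^\varepsilon(\tau,\xi,x) - Y(\tau,\xi)$. For each fixed $x$ this is a scalar It\^{o} process,
\begin{align*}
dZ^\varepsilon(\tau,x) = \bigl[ f(Y^\varepsilon(\tau,\xi,x)) - f(Y(\tau,\xi)) \bigr]\, d\tau + \varepsilon^{\gamma+\frac{1}{2}}\, dW_\tau(x),\qquad Z^\varepsilon(0,x)=0,
\end{align*}
driven by a one-dimensional Brownian motion $W_\tau(x)$ with quadratic variation $Q_d(x,x)\,\tau$.

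Applying the one-dimensional It\^{o} formula to $g(z)=|z|^{2p}$ gives
\begin{align*}
d|Z^\varepsilon|^{2p} &= 2p\,|Z^\varepsilon|^{2p-2} Z^\varepsilon \bigl[ f(Y^\varepsilon)-f(Y) \bigr]\,d\tau \\
&\quad + p(2p-1)\,\varepsilon^{2\gamma+1}\,|Z^\varepsilon|^{2p-2}\,Q_d(x,x)\,d\tau + 2p\,\varepsilon^{\gamma+\frac{1}{2}}|Z^\varepsilon|^{2p-2} Z^\varepsilon\,dW_\tau(x).
\end{align*}
I would then integrate in $x$ over $D$ (Fubini), stop at $\tau\wedge\tau_\varepsilon$, and take expectation so that the stochastic piece drops out. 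Up to the stopping time both $Y$ and $Y^\varepsilon$ remain in $[-2C_0,2C_0]$ (modulo an $\varepsilon^\kappa$ correction), so by the mean-value theorem the first integrand is bounded by $C_f|Z^\varepsilon|^{2p}$. The stopping rule $\|Z^\varepsilon\|_{W^{1,2p}(D)}\le \varepsilon^\kappa$ combined with H\"{o}lder controls the It\^{o} correction: $\int_D |Z^\varepsilon|^{2p-2}\,dx \le C\varepsilon^{\kappa(2p-2)}$ on $\{s\le\tau_\varepsilon\}$, and the smoothness and compact support of $Q_d$ makes $Q_d(x,x)$ uniformly bounded.

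Writing $\phi(\tau) := E\bigl[ \|Z^\varepsilon(\tau\wedge\tau_\varepsilon,\cdot)\|_{L^{2p}(D)}^{2p} \bigr]$, these estimates combine into the linear integral inequality
\begin{align*}
\phi(\tau)\le 2p\,C_f \int_0^\tau \phi(s)\,ds + C\,\varepsilon^{2\gamma+1+\kappa(2p-2)}\,\tau.
\end{align*}
Gronwall then gives $\phi(\tau)\le C\varepsilon^{2\gamma+1+\kappa(2p-2)}\,\tau\,e^{2pC_f\tau}$, and evaluating at $\tau = T_\varepsilon = \mu^{-1}|\log\varepsilon|$ converts the exponential into a factor of $\varepsilon^{-2pC_f/\mu}$, producing exactly $O(\varepsilon^{2\gamma+1+\kappa(2p-2)-2pC_f/\mu}|\log\varepsilon|)$.

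The main obstacle I anticipate is not the Gronwall arithmetic but the rigorous justification of the It\^{o}--Fubini interchange and of the martingale property of $\int_D \int_0^{\tau\wedge\tau_\varepsilon} 2p\,\varepsilon^{\gamma+\frac{1}{2}}|Z^\varepsilon|^{2p-2} Z^\varepsilon\,dW_s(x)\,dx$ in the $Q_d$-Brownian setting. The spatial smoothness and compact support of $Q_d$ are crucial here, and the hypothesis $p>d/4$ enters to guarantee enough Sobolev regularity of $Z^\varepsilon$ (and of $\partial_x Z^\varepsilon$, which solves a companion SDE driven by $\partial_x W_\tau(x)$) so that the $W^{1,2p}$-stopping is well-defined and all the spatial integrals are a.s.\ finite. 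Once that is in place, the estimate is simply It\^{o}-plus-Gronwall.
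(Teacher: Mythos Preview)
Your proposal is correct and follows essentially the same route as the paper: pointwise It\^{o} formula for $|Y^\varepsilon-Y|^{2p}$, bound the drift via $C_f$ using that $Y^\varepsilon$ stays near $Y$ up to $\tau_\varepsilon$ (Sobolev embedding from the $W^{1,2p}$-stopping), control the It\^{o} correction by $\varepsilon^{\kappa(2p-2)}$ times an integral of $Q_d(x,x)$, integrate in $x$, take expectation, and close with Gronwall at $T_\varepsilon=\mu^{-1}|\log\varepsilon|$. The only cosmetic difference is that the paper phrases the It\^{o}-correction bound through $\|Q_d(\cdot,\cdot)\|_{L^1}$ and a pointwise $L^\infty$ bound on $Z^\varepsilon$, whereas you use boundedness of $Q_d$ and a H\"{o}lder bound on $\int_D|Z^\varepsilon|^{2p-2}\,dx$; both yield the same order.
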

\begin{proof}
From Ito's rule and easy computations, we obtain an estimate as below:
\begin{align}
(Y^\varepsilon &(\tau \wedge \tau_{\varepsilon} ^p , \xi ,x) - Y (\tau \wedge \tau_{\varepsilon} ^p , \xi) )^{2p}  \nonumber \\
&\leq 2 p \int _0 ^{\tau }(Y^\varepsilon(s \wedge \tau_{\varepsilon} ^p , \xi ,x) - Y(s \wedge \tau_{\varepsilon} ^p , \xi ))^{2p-1} (f(Y^\varepsilon(s \wedge \tau_{\varepsilon} ^p , \xi ,x)) - f(Y(s \wedge \tau_{\varepsilon} ^p , \xi ))) ds \nonumber \\
&+ 2p\varepsilon ^{\gamma +  \frac{1}{2}} \int _0 ^{\tau \wedge \tau_{\varepsilon} ^p }(Y ^\varepsilon (s , \xi ,x) - Y(s , \xi ) )^{2p-1} dW_s (x) \nonumber \\
&+ p(2p-1) \varepsilon ^{2\gamma +1} Q_d(x,x) \int _0 ^{\tau }(Y^\varepsilon(s \wedge \tau_{\varepsilon} ^p , \xi ,x) - Y(s \wedge \tau_{\varepsilon} ^p , \xi ))^{2p-2} ds \nonumber \\
&\leq 2C_f p \int _0 ^{\tau }(Y^\varepsilon(s \wedge \tau_{\varepsilon} ^p , \xi ,x) - Y(s \wedge \tau_{\varepsilon} ^p , \xi ))^{2p} ds \nonumber \\
&+ 2p\varepsilon ^{\gamma +  \frac{1}{2}} \int _0 ^{\tau \wedge \tau_{\varepsilon} ^p }(Y ^\varepsilon (s , \xi ,x) - Y(s , \xi ) )^{2p-1} dW_s (x) \nonumber \\
&+ p(2p-1) \varepsilon ^{2\gamma +1} Q_d(x,x) \int _0 ^{\tau }(Y^\varepsilon(s \wedge \tau_{\varepsilon} ^p , \xi ,x) - Y(s \wedge \tau_{\varepsilon} ^p , \xi ))^{2p-2} ds .
\end{align}
For the second inequality, we use the fact that $Y^\varepsilon$ never goes out of some interval until the time $\tau_\varepsilon$, because we can use Sobolev embedding $W^{1,2p} \hookrightarrow L^\infty$ to $Y^\varepsilon -Y$ for $2p > \frac{d}{2}$, and $Y$ is bounded. We apply Fubini's theorem to the integral $\int _\mathbb{R} \cdot dx$ and $E[\cdot ]$ and obtain
\begin{align}
E[\| Y^\varepsilon (\tau \wedge \tau_{\varepsilon} , \xi ,x) &- Y (\tau \wedge \tau_{\varepsilon} , \xi) \|_{L^{2p}} ^{2p}]  \nonumber \\
&\leq 2C_f p \int _0 ^{\tau }E[\|Y^\varepsilon(s \wedge \tau_{\varepsilon} , \xi ,x) - Y(s \wedge \tau_{\varepsilon} , \xi ) \|_{L^{2p}} ^{2p}]ds \nonumber \\
&+ p(2p-1) \varepsilon ^{2\gamma +1 + \kappa (2p-2)}\|Q_d(\cdot,\cdot)\|_{L^1} \tau .
\end{align}
By applying Gronwall's inequality to $E[\| Y^\varepsilon (\tau \wedge \tau_{\varepsilon} , \xi ,\cdot ) - Y_\tau (\tau \wedge \tau_{\varepsilon} , \xi) \|_{L^{2p}} ^{2p}]$, we get
\begin{align}
\label{est4-1}
E[\| Y^\varepsilon (T_\varepsilon \wedge \tau_{\varepsilon} , \xi ,\cdot ) &- Y (T_\varepsilon \wedge \tau_{\varepsilon} , \xi) \|_{L^{2p}} ^{2p}] \nonumber \\
&\leq p(2p-1)C \varepsilon ^{2\gamma +1 + \kappa (2p-2) -\frac{2C_f p}{\mu}}\|Q_d(\cdot,\cdot)\|_{L^1} |\log \varepsilon |.
\end{align}
This completes the proof of this lemma.
\end{proof}

We define the derivative of $Y^\varepsilon -Y$ with respect to $x\in D$. We set $Z_i ^\varepsilon(\tau, \xi ,x) := \frac{\partial}{\partial x_i } Y^\varepsilon$ ($i=1, \cdots, d$) and can show that the process obeys the SDE;
\begin{align}
\label{sde2}
\begin{cases}
\dot{Z} _i ^\varepsilon (\tau, \xi ,x) = f'( Y ^\varepsilon (\tau, \xi ,x ))Z _i ^\varepsilon (\tau, \xi ,x) + \varepsilon ^{\gamma + \frac{1}{2}} \frac{\partial }{\partial x_i} \dot{W}_\tau (x),\ \ \  \tau \in (0,\infty ),\\
Z _i ^\varepsilon (0, \xi ,x)=0.
\end{cases}
\end{align}
from the SDE (\ref{sde}). Here the symbol $\frac{\partial }{\partial x_i} \dot{W}_\tau (x)$ denotes formal derivative in time $t$ of $\frac{\partial }{\partial x_i} W_\tau (x)$ which means the derivative of $W_\tau (x)$ in a space variable $x_i$. Especially the derivative $\frac{\partial }{\partial x_i} W_\tau (x)$ equals to $\sqrt{Q_i (x,x)} W_\tau$ in law sense where $Q_i (x,y):= \frac{\partial }{\partial x_i}\frac{\partial }{\partial y_i}Q_d(x,y)$ and $W_\tau$ is a one-dimensional Brownian motion.

\begin{lem}
\label{lem42}
If $p\in \mathbb{N}$ satisfies $p > \frac{d}{4}$, then the estimate
\begin{align}
E\left [ \sum _{i=1} ^{d} \| Z_i ^\varepsilon (\tau_\varepsilon \wedge T_\varepsilon ,\xi ,\cdot ) \|_{L^{2p}(D)} ^{2p} \right ] \leq O(\varepsilon ^{2\gamma +1 + \kappa (2p-2) -\frac{2C_f p}{\mu}} |\log \varepsilon |)
\end{align}
holds for sufficiently small $\varepsilon >0$.
\end{lem}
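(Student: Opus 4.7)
The argument should mirror Lemma \ref{lem41} very closely, now applied to the linear SDE (\ref{sde2}) governing $Z_i^\varepsilon$. The plan is as follows.

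First, I would apply It\^o's formula to $(Z_i^\varepsilon)^{2p}$ pointwise in $x\in D$. Using that $\partial W_\tau(x)/\partial x_i$ has quadratic variation $Q_i(x,x)\tau$, this yields, after stopping at $\tau_\varepsilon$,
\begin{align*}
(Z_i^\varepsilon)^{2p}(\tau\wedge\tau_\varepsilon,\xi,x)
&= 2p\int_0^{\tau\wedge\tau_\varepsilon} f'(Y^\varepsilon)(Z_i^\varepsilon)^{2p}\,ds + M^i_\tau(x) \\
&\quad + p(2p-1)\varepsilon^{2\gamma+1}Q_i(x,x)\int_0^{\tau\wedge\tau_\varepsilon}(Z_i^\varepsilon)^{2p-2}\,ds,
\end{align*}
where $M^i_\tau(x)$ is a mean-zero local martingale. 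Exactly as in Lemma \ref{lem41}, the Sobolev embedding implied by the assumption $p>d/4$ together with the definition of $\tau_\varepsilon$ keeps $Y^\varepsilon$ inside a fixed compact subset of $\mathbb{R}$, on which $f'\leq C_f$.

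Second, I would integrate in $x\in D$, take expectations (so the martingale piece drops) and apply Fubini. The main estimate for the It\^o correction is
\begin{align*}
\int_D Q_i(x,x)\,(Z_i^\varepsilon)^{2p-2}(s\wedge\tau_\varepsilon,\xi,x)\,dx
&\leq \|Q_i(\cdot,\cdot)\|_{L^p(D)}\,\|Z_i^\varepsilon(s\wedge\tau_\varepsilon,\xi,\cdot)\|_{L^{2p}(D)}^{2p-2} \\
&\leq \|Q_i(\cdot,\cdot)\|_{L^p(D)}\,\varepsilon^{\kappa(2p-2)},
\end{align*}
by H\"older's inequality with conjugate exponents $p$ and $p/(p-1)$. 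The second inequality uses the crucial observation that the ODE solution $Y(\tau,\xi)$ does not depend on $x$, so $Z_i^\varepsilon=\partial_{x_i}(Y^\varepsilon-Y)$, and therefore $\|Z_i^\varepsilon(s\wedge\tau_\varepsilon)\|_{L^{2p}}\leq\|Y^\varepsilon-Y\|_{W^{1,2p}}\leq\varepsilon^\kappa$ directly from the definition of $\tau_\varepsilon$. Combining with $f'(Y^\varepsilon)\leq C_f$ gives the integral inequality
\begin{align*}
E\bigl[\|Z_i^\varepsilon(\tau\wedge\tau_\varepsilon)\|_{L^{2p}}^{2p}\bigr]
&\leq 2pC_f\int_0^\tau E\bigl[\|Z_i^\varepsilon(s\wedge\tau_\varepsilon)\|_{L^{2p}}^{2p}\bigr]\,ds \\
&\quad + p(2p-1)\,\|Q_i(\cdot,\cdot)\|_{L^p(D)}\,\varepsilon^{2\gamma+1+\kappa(2p-2)}\,\tau.
\end{align*}

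Finally, Gronwall's inequality applied at $\tau=T_\varepsilon=\mu^{-1}|\log\varepsilon|$ produces the bound $O(\varepsilon^{2\gamma+1+\kappa(2p-2)-2C_fp/\mu}|\log\varepsilon|)$ for each $i$, after which summing $i=1,\dots,d$ concludes the proof. The only delicate point---and my candidate for the main obstacle---is matching the norm controlled by the stopping time ($W^{1,2p}$ of $Y^\varepsilon-Y$) with the $L^{2p}$ norm of $Z_i^\varepsilon$ that the It\^o correction term demands. The trick that makes this essentially cost-free here is that $Y$ depends only on the scalar $\xi$, so every spatial derivative of $Y^\varepsilon-Y$ is exactly $Z_i^\varepsilon$ and the $W^{1,2p}$ control transfers intact; had $Y$ carried genuine $x$-dependence, one would instead have to control $\nabla Y$ separately and the clean exponent in the statement would not survive.
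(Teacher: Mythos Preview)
Your proposal is correct and follows the same approach as the paper: apply It\^o's formula to $(Z_i^\varepsilon)^{2p}$, use the stopping time together with $f'\le C_f$ for the drift, kill the martingale after taking expectations, bound the It\^o correction via the stopping-time control $\|Z_i^\varepsilon\|_{L^{2p}}\le\varepsilon^\kappa$, and finish with Gronwall at $T_\varepsilon$. The paper in fact just writes the It\^o expansion and says ``the rest of the proof is similar to that of Lemma~\ref{lem41}''; your treatment of the correction term via H\"older with exponents $p$ and $p/(p-1)$ (yielding $\|Q_i(\cdot,\cdot)\|_{L^p}$) is a slightly more explicit variant of what the paper leaves implicit (it records $\|Q_i(\cdot,\cdot)\|_{L^1}$ in the subsequent proposition, which strictly speaking would need a pointwise bound on $Z_i^\varepsilon$ that is not available, so your version is actually cleaner here).
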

\begin{proof}
Applying Ito's rule to $(Z_i ^\varepsilon )^{2p}$, we get
\begin{align}
(Z_i ^\varepsilon (\tau \wedge \tau_{\varepsilon} , \xi ,x) )^{2p}  &= 2p \int _0 ^{\tau \wedge \tau_{\varepsilon}}  f' (Y^\varepsilon (s , \xi ,x)) (Z_i ^\varepsilon(s , \xi ,x) )^{2p} ds \nonumber \\
&+ 2p\varepsilon ^{\gamma + \frac{1}{2}} \int _0 ^{\tau \wedge \tau_{\varepsilon}}Z_i ^\varepsilon (s , \xi ,x)^{2p-1} d\frac{\partial }{\partial x_i}W_s (x) \nonumber \\
&+ p(2p-1)\varepsilon ^{2\gamma +1} Q_i(x,x) \int _0 ^{\tau \wedge \tau_{\varepsilon}} (Z_i ^\varepsilon(s , \xi ,x) )^{2p-2} ds.
\end{align}
The rest of the proof is similar to that of Lemma \ref{lem41}.
\end{proof}

\begin{pro}
\label{thm41}
If there exists $p\in \mathbb{N}$ which satisfies $p > \frac{d}{4}$ and $2\gamma +1 + \kappa (2p-5) -(\alpha +p)\frac{2C_f }{\mu} >0$, then there exists $C>0$ such that
\begin{align}
P\left ( \|Y^\varepsilon - Y \|_\infty \leq C\varepsilon ^\kappa \right ) \to 1
\end{align}
as $\varepsilon \to 0$, where the norm $\| \cdot \|_\infty$ is a supremum norm on $\tau \in [0,T_\varepsilon ]$, $\xi \in [\varepsilon ^\alpha ,2C_0]$ and $x\in D$.
\end{pro}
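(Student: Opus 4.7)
The plan is to lift the $L^{2p}$-moment bounds of Lemmas \ref{lem41} and \ref{lem42} to an almost-sure supremum bound in the three variables $\tau$, $\xi$, $x$, combining (i) Sobolev embedding in $x$, (ii) a stopping-time argument in $\tau$ using $\tau_\varepsilon^p$ together with Markov's inequality, and (iii) a discretisation in $\xi$ followed by a union bound.

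First fix $\xi\in[\varepsilon^\alpha,2C_0]$. Since $Y(\tau,\xi)$ does not depend on $x$, the identity $\|Y^\varepsilon-Y\|_{W^{1,2p}(D)}^{2p}=\|Y^\varepsilon-Y\|_{L^{2p}(D)}^{2p}+\sum_{i=1}^{d}\|Z_i^\varepsilon\|_{L^{2p}(D)}^{2p}$ shows that the two preceding lemmas together control $E[\|Y^\varepsilon-Y\|_{W^{1,2p}}^{2p}]$. The norm is continuous in $\tau$, hence on $\{\tau_\varepsilon^p<\infty\}$ it equals $\varepsilon^\kappa$ at $\tau=\tau_\varepsilon^p$; Markov's inequality then gives $P(\tau_\varepsilon^p(\xi)\leq T_\varepsilon)\leq \varepsilon^{-2p\kappa}\cdot O(\varepsilon^{2\gamma+1+\kappa(2p-2)-2pC_f/\mu}|\log\varepsilon|) = O(\varepsilon^{2\gamma+1-2\kappa-2pC_f/\mu}|\log\varepsilon|)$. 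On the complementary event the Sobolev embedding $W^{1,2p}(D)\hookrightarrow L^\infty(D)$ upgrades this to $\sup_{\tau\leq T_\varepsilon,\,x\in D}|Y^\varepsilon(\tau,\xi,x)-Y(\tau,\xi)|\leq C\varepsilon^\kappa$.

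To make the bound uniform in $\xi\in[\varepsilon^\alpha,2C_0]$, I introduce a grid $\{\xi_k\}_{k=1}^{N}$ of spacing $h:=\varepsilon^{\kappa+2\alpha C_f/\mu}$, so $N=O(\varepsilon^{-\kappa-2\alpha C_f/\mu})$. Corollary \ref{cor31} with $\beta=\kappa$ and $\delta=2\alpha C_f/\mu$ controls the $\xi$-oscillation of the deterministic solution $Y$ by $\varepsilon^\kappa$. For the stochastic solution, $Y^\varepsilon(\tau,\xi,x)-Y^\varepsilon(\tau,\xi_k,x)$ obeys a pathwise ODE (the additive noise cancels between two copies driven by the same noise with different initial data), and Gronwall on $[0,T_\varepsilon]$ yields $|Y^\varepsilon(\tau,\xi,x)-Y^\varepsilon(\tau,\xi_k,x)|\leq h\,e^{C_f T_\varepsilon}=\varepsilon^{\kappa+(2\alpha-1)C_f/\mu}\leq\varepsilon^\kappa$, where I use $\alpha>\tfrac12$. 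Applying the triangle inequality through a nearest grid point and taking a union bound gives $P(\sup_{\tau,\xi,x}|Y^\varepsilon-Y|>C\varepsilon^\kappa)\leq N\cdot O(\varepsilon^{2\gamma+1-2\kappa-2pC_f/\mu}|\log\varepsilon|) = O(\varepsilon^{2\gamma+1-3\kappa-2(\alpha+p)C_f/\mu}|\log\varepsilon|)$, which vanishes as $\varepsilon\to 0$ under the hypothesis on $\gamma,\kappa,\alpha,p$.

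The main obstacle is the exponential-in-$\tau$ growth of the variational derivative $\partial_\xi Y^\varepsilon$ over the logarithmic window $T_\varepsilon$: it both forces the grid in $\xi$ to be as fine as $\varepsilon^{\kappa+2\alpha C_f/\mu}$ and is responsible for the factor $\varepsilon^{-2pC_f/\mu}$ appearing in the moment bounds. A refinement of the $\xi$-step that exploits the damping of $f'$ near $\pm 1$, in the spirit of the proof of Corollary \ref{cor31}, would sharpen the $-3\kappa$ obtained above to the $(2p-5)\kappa$ exponent stated in the hypothesis, but the scheme outlined here is already enough for the conclusion $P(\|Y^\varepsilon-Y\|_\infty\leq C\varepsilon^\kappa)\to 1$.
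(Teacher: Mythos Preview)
Your scheme is the paper's: Lemmas \ref{lem41}--\ref{lem42} plus Sobolev embedding and Chebyshev control $P(\tau_\varepsilon^p\le T_\varepsilon)$ for fixed $\xi$, then a grid in $\xi$ of mesh $\varepsilon^{\kappa+2\alpha C_f/\mu}$ and a union bound. The one difference is the passage from grid points to intermediate $\xi'$. The paper does not Gronwall the SDE difference; it uses comparison in the initial datum for \emph{both} the SDE and the ODE to obtain the sandwiches $Y^\varepsilon(\tau,\xi_k,x)\le Y^\varepsilon(\tau,\xi',x)\le Y^\varepsilon(\tau,\xi_{k+1},x)$ and $Y(\tau,\xi_k)\le Y(\tau,\xi')\le Y(\tau,\xi_{k+1})$, and then bounds only the ODE increment via Corollary \ref{cor31}. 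On $\Omega_{\xi_k}^\varepsilon\cap\Omega_{\xi_{k+1}}^\varepsilon$ this gives $|Y^\varepsilon(\tau,\xi',x)-Y(\tau,\xi')|\le 3C\varepsilon^\kappa$ directly. That route does not need $\alpha>\tfrac12$ at this step and automatically keeps $Y^\varepsilon(\cdot,\xi',\cdot)$ inside $[-2C_0,2C_0]$---a fact your Gronwall (through $C_f=\sup_{[-2C_0,2C_0]}f'$) uses but does not justify for non-grid $\xi'$; the monotone sandwich is the natural fix. As for the exponent, your Chebyshev factor $\varepsilon^{-2p\kappa}$ is the correct one and yields $2\gamma+1-3\kappa-2(\alpha+p)C_f/\mu$, in agreement with the one-dimensional Lemma \ref{thm43-2} (where $p=1$); the $\kappa(2p-5)$ in the stated hypothesis traces back to the paper writing $\varepsilon^{-2\kappa}E[\|\cdot\|_{W^{1,2p}}^{2}]$ in its Chebyshev step, which appears to be a slip rather than a sharper interpolation, so the ``refinement'' you allude to is not actually carried out in the paper's proof.
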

\begin{proof}
Using Chebyschev inequality, we obtain the estimate
\begin{align}
P(\tau _\varepsilon \leq T_\varepsilon ) &\leq \varepsilon ^{-2\kappa} E[\| Y^\varepsilon ( \tau _\varepsilon \wedge T_\varepsilon ,\xi ,\cdot ) - Y ( \tau _\varepsilon \wedge T_\varepsilon ,\xi ) \|_{W^{1,2p}} ^2] \nonumber \\
&\leq C \varepsilon ^{2\gamma +1 + \kappa (2p-2) -\frac{2C_f p}{\mu} -2\kappa} | \log \varepsilon | \left \{ \|Q_d(\cdot ,\cdot )\| _{L^1} + \sum _{i=1} ^d \|Q_i(\cdot ,\cdot )\| _{L^1} \right \}
\end{align}
from Lemma \ref{lem41} and \ref{lem42}. There exists a positive constant $C' >0$ such that
\begin{align}
\label{est4-2}
P& \left( \underset{\tau \in [0,T_\varepsilon]}{\sup}  \| Y^\varepsilon ( \tau ,\xi ,\cdot ) - Y ( \tau ,\xi ) \|_{L^\infty} \leq C\varepsilon ^\kappa \right) \nonumber \\
&\geq P\left( \underset{\tau \in [0,T_\varepsilon]}{\sup}  \| Y^\varepsilon ( \tau ,\xi ,\cdot ) - Y ( \tau ,\xi ) \|_{W^{1,2p}} \leq \varepsilon ^\kappa \right) = P(\tau _\varepsilon \geq T_\varepsilon ) \nonumber \\
&\geq 1-C' \varepsilon ^{2\gamma +1 + \kappa (2p-4) -\frac{2C_f p}{\mu} } | \log \varepsilon |  \left \{ \|Q_d(\cdot ,\cdot )\| _{L^1} + \sum _{i=1} ^d \|Q_i(\cdot ,\cdot )\| _{L^1} \right \}
\end{align}
each fixed $\xi \in [\varepsilon ^{\alpha},2C_0]$. We use Sobolev embedding in the first line and the positive constant $C$ comes from Sobolev inequality. We note that the norm $\| \cdot \|_{L^\infty}$ means the supremum norm on $x \in D$. We need to estimate the supremum which is concerned with $\xi \in [\varepsilon ^{\alpha},2C_0]$. We define events
\begin{align}
\Omega_\xi ^\varepsilon := \{ \omega \in \Omega | \sup _{\tau \in [0,T_\varepsilon]}  \| Y^\varepsilon ( \tau ,\xi ,\cdot )(\omega ) - Y ( \tau ,\xi ) \|_{L^\infty} \leq C\varepsilon ^\kappa \}.
\end{align}
At first, we fix the path $\omega \in \Omega_\xi \cap \Omega_{\xi +\varepsilon ^{\kappa +\theta}}$ where $\theta := \frac{2C_f \tilde{\alpha}}{\mu}$ for small $\tilde{\alpha}  >\alpha$, and the initial value $\xi$ and $\xi +\varepsilon ^{\kappa +\theta}$ is in the interval $[\varepsilon ^{\alpha},2C_0]$. For all $\tau \in [0,T_\varepsilon]$, $\xi ' \in [\xi ,\xi +\varepsilon ^{\kappa +\theta}]$ and $x \in D$, we can compare as below.
\begin{align}
&Y^\varepsilon (\tau , \xi ,x) (\omega ) \leq Y^\varepsilon (\tau , \xi ' ,x) (\omega ) \leq Y^\varepsilon (\tau , \xi + \varepsilon ^{\kappa +\theta},x) (\omega ), \\
&Y (\tau , \xi ) \leq Y (\tau , \xi ' ) \leq Y (\tau , \xi + \varepsilon ^{\kappa +\theta} ).
\end{align}
These inequalities are from the comparison of initial value for SDE and ODE respectively. From (\ref{est4-2}) and Corollary \ref{cor31}, we can show that
\begin{align}
|Y^\varepsilon ( \tau ,\xi' ,x )(\omega ) - Y ( \tau ,\xi' )| \leq 3C\varepsilon ^{\kappa}
\end{align}
holds for all $\tau \in [0,T_\varepsilon ]$, $\xi' \in  [\xi ,\xi +\varepsilon ^{\kappa +\theta}]$, $x \in D$ and $\omega \in \Omega_\xi \cap \Omega_{\xi +\varepsilon ^{\kappa +\theta}}$. Repeating these argument, we get
\begin{align}
\label{est4-3}
P&\left( \|Y^\varepsilon - Y \|_\infty \leq C\varepsilon ^{\kappa} \right ) \nonumber \\
&\geq 1-C' \varepsilon ^{2\gamma +1 + \kappa (2p-4) -\frac{2C_f p}{\mu} } | \log \varepsilon | \left ( \left [ \frac{2C_0}{\varepsilon ^{\kappa +\theta}} \right ] +1 \right )  \left \{ \|Q_d(\cdot ,\cdot )\| _{L^1} + \sum _{i=1} ^d \|Q_i(\cdot ,\cdot )\| _{L^1} \right \},
\end{align}
where $\| \cdot \|_\infty$ is a supremum norm on $\tau \in [0,T_\varepsilon ]$, $\xi \in [\varepsilon ^\alpha ,2C_0]$ and $x\in D$, and thus the left hand side of (\ref{est4-3}) tends to 0 as $\varepsilon \to 0$.
\end{proof}

We note that we can remove the constant $C>0$ from Sobolev inequality. We can assert the same statements as Proposition \ref{thm41} in the case of $\xi \in [-2C_0,-\varepsilon ^{\alpha}]$. To sum up these results, we state next proposition.

\begin{pro}
\label{thm43}
If there exists $p\in \mathbb{N}$ which satisfies $p > \frac{d}{4}$ and $2\gamma +1 + \kappa (2p-5) -(\alpha +p)\frac{2C_f }{\mu} >0$, then we have that
\begin{align}
P\left ( \|Y^\varepsilon - Y \|_\infty \leq \varepsilon ^\kappa \right ) \to 1
\end{align}
as $\varepsilon \to 0$, where $\| \cdot \|_\infty$ is a supreme norm on $\tau \in [0,\frac{1}{\mu}|\log \varepsilon |]$, $\xi \in [-2C_0,2C_0]\backslash (-\varepsilon ^{\alpha}, \varepsilon ^{\alpha})$ and $x\in D$.
\end{pro}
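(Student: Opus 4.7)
The plan is to reduce Proposition \ref{thm43} to Proposition \ref{thm41}, and the whole task amounts to handling two cosmetic issues in the statement of Proposition \ref{thm41}: first, the presence of an extra constant $C$ coming from Sobolev embedding, and second, the restriction $\xi \in [\varepsilon^\alpha, 2C_0]$ rather than the full set $[-2C_0,2C_0]\setminus(-\varepsilon^\alpha,\varepsilon^\alpha)$.

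I would first remove the constant $C$. In the proof of Proposition \ref{thm41}, the factor $C$ in the bound $\|Y^\varepsilon-Y\|_{L^\infty}\le C\varepsilon^\kappa$ appears only through the Sobolev embedding $W^{1,2p}(D)\hookrightarrow L^\infty(D)$, applied with the stopping time $\tau_\varepsilon^p$ built from the threshold $\|\cdot\|_{W^{1,2p}}>\varepsilon^\kappa$. I would redefine this stopping time with the tighter threshold $\varepsilon^\kappa/C$, so that Sobolev embedding gives $\|Y^\varepsilon-Y\|_{L^\infty}\le\varepsilon^\kappa$ directly. Re-running the Itô/Gronwall computation of Lemma \ref{lem41} and Lemma \ref{lem42} with this new threshold only modifies the bound by an extra factor $C^{-2p}$ (respectively $C^{2p}$ after Chebyshev), which is a constant independent of $\varepsilon$. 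The exponent of $\varepsilon$ in estimate (\ref{est4-2}) and (\ref{est4-3}) is therefore unchanged, and under the hypothesis $2\gamma+1+\kappa(2p-5)-(\alpha+p)\frac{2C_f}{\mu}>0$ the right-hand side of the analogue of (\ref{est4-3}) still tends to $1$ as $\varepsilon\to 0$.

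To extend to $\xi\in[-2C_0,-\varepsilon^\alpha]$ I would invoke the oddness of $f$, condition (v) in (\ref{reaction}). Setting $\widetilde Y^\varepsilon(\tau,\xi,x):=-Y^\varepsilon(\tau,-\xi,x)$ with the driving noise replaced by $-W_\tau$ (which is again a $Q_d$-Brownian motion in law), one sees that $\widetilde Y^\varepsilon$ solves the same SDE (\ref{sde}) as $Y^\varepsilon$, and similarly for the ODE (\ref{eq:ode}); consequently the problem with $\xi\in[-2C_0,-\varepsilon^\alpha]$ is in distribution identical to the one with $-\xi\in[\varepsilon^\alpha,2C_0]$ that has already been settled. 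Equivalently, and closer to the bookkeeping already in place, one may mimic the proof of Proposition \ref{thm41} verbatim, using Corollary \ref{cor31}(ii) instead of (i) in the partition-of-the-initial-condition argument; the monotonicity of $Y^\varepsilon(\tau,\cdot,x)$ and $Y(\tau,\cdot)$ on which that argument rests is equally valid on $[-2C_0,-\varepsilon^\alpha]$.

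Finally I would combine the two events by the union bound: the probability that $\|Y^\varepsilon-Y\|_\infty\le\varepsilon^\kappa$ on all of $[0,T_\varepsilon]\times([-2C_0,2C_0]\setminus(-\varepsilon^\alpha,\varepsilon^\alpha))\times D$ is at least $1$ minus twice the deviation probability already controlled, and the hypothesis on $\gamma$, $\kappa$, $\alpha$, $p$ makes this deviation $o(1)$. The only genuine subtlety is the first step (removing $C$), where one must verify that rescaling the stopping-time threshold does not damage the monotonicity of the exponent in $\kappa$; this is immediate from the fact that replacing $\varepsilon^\kappa$ by $C^{-1}\varepsilon^\kappa$ only multiplies the final probability bound by a $\kappa$-independent constant. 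No new obstacle beyond what is already done in the proof of Proposition \ref{thm41} should arise.
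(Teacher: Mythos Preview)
Your proposal is correct and matches the paper's approach: the paper simply remarks, just before stating Proposition \ref{thm43}, that the Sobolev constant $C$ can be removed and that the same argument as in Proposition \ref{thm41} goes through for $\xi\in[-2C_0,-\varepsilon^\alpha]$, without spelling out the details. Your method of rescaling the stopping-time threshold to absorb $C$ and your alternative (b) of rerunning the proof with Corollary \ref{cor31}(ii) are exactly the fleshed-out versions of these two remarks; the oddness argument (a) is a minor variant that also works since only the law of the noise enters the probability estimate.
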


If we set a stopping time 
\begin{align}
\tau _2 = \inf \{ \tau >0 | |Y^\varepsilon - Y | > \varepsilon ^\kappa \ for \ some \ \xi \in [-2C_0,2C_0]\backslash (-\varepsilon ^{\alpha}, \varepsilon ^{\alpha}),\  x\in D \} ,
\end{align}
then the statement of Proposition \ref{thm43} is equivalent to $\lim_{\varepsilon \to 0}P(\tau _2 > \frac{1}{\mu}|\log \varepsilon |) = 1$. Now we consider the time $t \leq \varepsilon \tau _2$ set functions as below:
\begin{align}
w_\varepsilon ^\pm (t,x) = Y^\varepsilon \left( \frac{t}{\varepsilon } , u_0  ^{\pm} (x) \pm \varepsilon C_2 \left(e^\frac{\mu t}{\varepsilon}-1 \right) ,x \right) .
\end{align}
Here the function $u_0 ^{\pm}$ can be taken satisfying (\ref{eq:ini2}) the Neumann boundary condition on $\partial D$, $u_0 ^{-}\leq u_0\leq u_0 ^{+}$ and $u_0 ^{-}= u_0= u_0 ^{+}$ if $x\in D$ and $d(x,\partial D)\geq d_1$ for sufficiently small $d_1 >0$. This is discussed in Section 3.3 of \cite{ham}.

\section{Smooth approximation}
\label{sec5}
Now we need to prove that $w_\varepsilon ^\pm (t,x)$ are super and sub solutions of (\ref{eq:spde}). However, because $\dot{W}_t (x)$ is the formal derivative in time $t$ and singular in time, we cannot directly use the comparison theorem of PDE for each path of the solutions. Thus we approximate these solutions smoothly, and use the comparison theorem to these smooth solutions $u^{\varepsilon , \delta}(t,x)$ and $w_{\varepsilon  , \delta}^\pm (t,x)$. We mention the definition of $u^{\varepsilon , \delta}(t,x)$ and $w_{\varepsilon  , \delta}^\pm (t,x)$ later. We consider a smooth approximation $W_t ^{(\delta )}(x)$ of $W_t ^{Q_d}(x)$. First, we define $W_t ^\delta(x)$ for $\delta >0$ as a convolution
\begin{align}
W_t ^\delta (x) := \int_0 ^t W_s ^{Q_d} (x) \rho _\delta (t-s)ds,\ \ \ t \geq 0,\ x\in D
\end{align}
where $\rho _\delta (t):= \frac{1}{\delta}\rho (\frac{t}{\delta})$, and $\rho$ satisfies the following conditions.
\begin{align}
 \begin{cases}
  \text{(i)}\int_\mathbb{R} \rho (t)dt=1, & \\
  \text{(ii)}\rm{supp} \rho \subset [0,1], & \\
  \text{(iii)}\rho \in C^\infty (\mathbb{R} ). \\
 \end{cases}
\end{align}
Then, we can take $W_t ^{(\delta )}(x)( \omega )$ which converges to $W_t ^{Q_d}(x)(\omega )$ uniformly on $[0,\infty) \times \Omega\ (a.s.)$ as $\delta \to 0$ by changing the speed of convergence $\delta$ in $W_t ^\delta (x)$ for each $\omega$. 

\begin{lem}
\label{lem51}
For every $\alpha \in ( 0, \frac{1}{2})$ and $\beta \in ( 0, \infty )$ there exists almost surely finite and positive random variable $K(\omega )$ such that
\begin{align}
|W_t ^{Q_d}(x)(\omega ) - W_s ^{Q_d}(y)(\omega ) | \leq K( \omega )\{ |t-s|^\alpha+ |x-y| ^\beta \} , \ \ \  t,s \in [0,1],\ x\in D
\end{align}
holds almost surely. 
\end{lem}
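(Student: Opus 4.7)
The plan is to view $(t,x)\mapsto W_t^{Q_d}(x)$ as a centered Gaussian random field on $[0,1]\times D$ and apply the multi-parameter Kolmogorov-Chentsov continuity theorem. The only nontrivial input is a sharp bound on the variance of increments, which comes from the smoothness of the covariance kernel $Q_d$.

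First I would estimate $E[(W_t^{Q_d}(x) - W_s^{Q_d}(y))^2]$. Assuming $s\le t$ without loss of generality, formula (\ref{cov}) gives
\begin{align*}
E[(W_t^{Q_d}(x) - W_s^{Q_d}(y))^2]
 = (t-s)Q_d(x,x) + s\{Q_d(x,x) - 2Q_d(x,y) + Q_d(y,y)\}.
\end{align*}
The bracketed quantity is a symmetric second difference of the smooth function $Q_d$. Exploiting the symmetry $Q_d(x,y)=Q_d(y,x)$, which forces $\frac{\partial}{\partial x}Q_d(x,x)=\frac{\partial}{\partial y}Q_d(x,x)$, the first-order Taylor contributions at the diagonal cancel; smoothness and compact support of $Q_d$ then yield $|Q_d(x,x)-2Q_d(x,y)+Q_d(y,y)| \le C|x-y|^2$ uniformly on $D\times D$. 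Consequently,
\begin{align*}
E[(W_t^{Q_d}(x) - W_s^{Q_d}(y))^2] \le C\,(|t-s| + |x-y|^2),\qquad t,s\in[0,1],\ x,y\in D.
\end{align*}

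Gaussianity of the increment then gives, for every even integer $p\ge 2$,
\begin{align*}
E[|W_t^{Q_d}(x) - W_s^{Q_d}(y)|^p] \le C_p\,(|t-s|+|x-y|^2)^{p/2} \le C_p'\,(|t-s|^{p/2}+|x-y|^p).
\end{align*}
Working in the parabolic metric $\rho((t,x),(s,y)) := |t-s|^{1/2}+|x-y|$ on $[0,1]\times D$, the right-hand side is bounded by $2C_p'\rho^p$ and balls of radius $r$ have volume of order $r^{d+2}$. The multi-parameter Kolmogorov-Chentsov theorem, applied in this metric with $p$ chosen sufficiently large, produces an a.s.\ finite random variable $K(\omega)>0$ such that
\begin{align*}
|W_t^{Q_d}(x)(\omega) - W_s^{Q_d}(y)(\omega)| \le K(\omega)\,\rho((t,x),(s,y))^{1-(d+2)/p},
\end{align*}
and splitting $\rho^{1-(d+2)/p}$ into its time and space pieces yields the Hölder bound with any prescribed $\alpha\in(0,1/2)$ and $\beta\in(0,1)$ by taking $p$ large enough. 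Sample continuity of $W^{Q_d}$ in $(t,x)$ ensures that this Hölder-continuous modification is indistinguishable from the original field.

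The only technical subtlety is the variance estimate in Step 1: securing the quadratic (rather than linear) scaling of the spatial second difference, which relies entirely on the cancellation forced by symmetry of $Q_d$. Once that is in place, the rest is a textbook invocation of Gaussian moment bounds and Kolmogorov's criterion, and I do not anticipate any further obstacle.
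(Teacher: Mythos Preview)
Your approach is essentially the paper's: a second-moment bound on the increments from smoothness and symmetry of $Q_d$, Gaussianity to boost to arbitrary moments, then Kolmogorov's criterion (the paper cites it as Kolmogorov--Totoki). Your variance computation is in fact more careful than the paper's one-line claim.

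One discrepancy worth flagging: you deliver only $\beta\in(0,1)$, while the lemma is stated for all $\beta\in(0,\infty)$. This is not a defect of your argument but an over-claim in the statement: a H\"older estimate $|W_t^{Q_d}(x)-W_t^{Q_d}(y)|\le K|x-y|^\beta$ with $\beta>1$ on a connected domain forces $x\mapsto W_t^{Q_d}(x)$ to be constant, which it is not. (The paper's asserted variance bound ``$\le C\{|t-s|+|x-y|^\beta\}$ for all $\beta>0$'' is likewise false for $\beta>2$; your $O(|x-y|^2)$ is the right answer.) The lemma is applied immediately afterward only with $x=y$, so only the time regularity is actually used and the spatial range is moot.
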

\begin{proof}
From the differentiability and the boundedness of ${Q_d}(x,x)$, we can easily show that
\begin{align}
E[(W_t ^{Q_d}(x) - W_s ^{Q_d}(y))^2] \leq C\{  |t-s|+ |x-y|^\beta \}
\end{align}
for all $\beta >0$. Because $\{ W_t ^{Q_d}(x)\}$ is a Gaussian system, we can apply Kolmogorov-Totoki's criterion and show the claim of this lemma. See Lemma 2.3 and Corollary 2.1 of \cite{k} for the similar argument.
\end{proof}

For any $\alpha \in ( 0, \frac{1}{2})$ and $x=y$ in Lemma \ref{lem51}, easy computations give us
\begin{align}
|W_t ^{Q_d}(x) - W_t ^\delta(x) | &= \left| \int_0 ^t W_t ^{Q_d}(x) \rho _\delta (t-s)ds - \int_0 ^t W_s ^{Q_d}(x) \rho _\delta (t-s)ds\right| \nonumber \\
&\leq \int_0 ^t |W_t ^{Q_d}(x) - W_s ^{Q_d}(x)| \rho _\delta (t-s)ds \leq K( \omega ) \delta ^\alpha .
\end{align}
Thus we set $\delta (\omega ):= (\frac{\delta }{K(\omega )})^{\frac{1}{\alpha }}$ and define $W_t ^{(\delta )}(\omega ) := W_t ^{\delta(\omega )}(x)(\omega )$, then $W_t ^{(\delta )}(x)$ is the measurable process and we have that $\sup _{t\in [0,\infty )}|W_t ^{Q_d} (x)- W_t ^{(\delta)}(x) | \leq \delta$ almost surely. Now we set the random variable $K(\omega)$ large enough $\sup _{t\in [0,\infty )}|\partial _x W_t ^{Q_d} (x)- \partial _x W_t ^{(\delta)}(x) | \leq \delta$ and $\sup _{t\in [0,\infty )}|\partial _{xx}W_t ^{Q_d} (x)- \partial _{xx}W_t ^{(\delta)}(x) | \leq \delta$ to hold. Now we change the variable $t$ to $\varepsilon \tau$ and consider a solution of the following ODE:
\begin{align}
\label{ode5-1}
\begin{cases}
\dot{Y} ^{\varepsilon , \delta} (\tau, \xi ,x) = f( Y ^{\varepsilon , \delta} (\tau, \xi ,x)) + \varepsilon ^{\gamma + \frac{1}{2}} \dot{\widetilde{W}}_\tau ^{(\delta )}(x),\ \ \  \tau >0 ,\\
Y ^{\varepsilon , \delta} (0, \xi ,x)=\xi \in [-2C_0 ,2C_0].
\end{cases}
\end{align}
For simplicity, we denote the process $\widetilde{W}_\tau ^{(\delta )}(x):=\varepsilon ^{-\frac{1}{2}}W_{\varepsilon \tau} ^{(\delta )}(x) $ as ${W}_\tau ^{(\delta )}(x)$ in (\ref{ode5-1}). And we set
\begin{align}
w_{\varepsilon ,\delta} ^\pm (t,x) = Y^{\varepsilon ,\delta} \left( \frac{t}{\varepsilon } , u_0 ^\pm (x)  \pm \varepsilon C_2 \left(e^\frac{\mu t}{\varepsilon}-1 \right) ,x \right) .
\end{align}

\begin{lem}
\label{lem52}
We set a stopping time
\begin{align}
\tau _3 := \inf \{ \tau >0 | |Y ^{\varepsilon ,\delta} (\tau ,\xi ,x)|>2C_0+ \delta ' \ for\ some\ \xi \in [-2C_0,2C_0],\ x \in D \ or \ \delta \in (0,\delta _\varepsilon ] \},
\end{align}
for fixed $\delta '>0$. If $2\gamma +1 > 2C_f C_1$, then for every sequence $\{\delta _\varepsilon \}$ which tends to 0 as $\varepsilon \to 0$, we have that $\lim _{\varepsilon \to 0} P(\tau _3 > C_1| \log \varepsilon |) =1$.
\end{lem}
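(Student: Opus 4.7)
My plan is to compare $Y^{\varepsilon,\delta}$ with the deterministic ODE solution $Y$ of (\ref{eq:ode}) via a Gronwall estimate applied to the integrated form of (\ref{ode5-1}), and to show that the resulting noise-induced perturbation is negligible on the time scale $C_1|\log\varepsilon|$. First I would integrate (\ref{ode5-1}) and subtract the integrated version of (\ref{eq:ode}) to obtain
\[
Y^{\varepsilon,\delta}(\tau,\xi,x) - Y(\tau,\xi) = \int_0^\tau \bigl[f(Y^{\varepsilon,\delta}(s,\xi,x)) - f(Y(s,\xi))\bigr]\,ds + \varepsilon^{\gamma+\frac12}\, W_\tau^{(\delta)}(x).
\]
The benefit of this integral form is that the factor $\dot W^{(\delta)}$, which blows up as $\delta\to 0$, disappears and only the continuous process $W^{(\delta)}$ remains. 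Before time $\tau_3$ both solutions are bounded (for $Y$ this follows from condition (vi) of (\ref{reaction}) and the sign analysis of the ODE), so $f$ is Lipschitz with constant $C_f':=\sup_{|u|\le 2C_0+\delta'}f'(u)$, which can be made arbitrarily close to $C_f$ by shrinking $\delta'$; in particular the hypothesis $2\gamma+1>2C_fC_1$ upgrades to $2\gamma+1>2C_f'C_1$.

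Applying Gronwall's inequality on $[0,\tau\wedge\tau_3]$ then gives
\[
|Y^{\varepsilon,\delta}(\tau,\xi,x)-Y(\tau,\xi)| \le \varepsilon^{\gamma+\frac12}\, e^{C_f'\tau}\, \sup_{s\le\tau}|W_s^{(\delta)}(x)|,
\]
and the remaining task is to bound $\sup_{s\le C_1|\log\varepsilon|,\,x\in D,\,\delta\in(0,\delta_\varepsilon]}|W_s^{(\delta)}(x)|$ by a quantity growing slower than $\varepsilon^{-(\gamma+\frac12-C_f'C_1)}$. By the construction preceding Lemma \ref{lem51}, $\|W^{Q_d}-W^{(\delta)}\|_\infty\le\delta\le\delta_\varepsilon$, so it suffices to handle $W^{Q_d}$. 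Extending Lemma \ref{lem51} from $[0,1]$ to $[0,C_1|\log\varepsilon|]$ by the standard dyadic Kolmogorov--Totoki argument (mild growth of the Hölder constant with $T$ is absorbed into the random variable $K(\omega)$) and exploiting $W_0^{Q_d}(x)\equiv 0$ yields, almost surely,
\[
\sup_{s\le C_1|\log\varepsilon|,\,x\in D}|W_s^{Q_d}(x)| \le K(\omega)\,(C_1|\log\varepsilon|)^{\alpha_0}
\]
for any $\alpha_0<\tfrac12$, which is only polylogarithmic in $\varepsilon^{-1}$.

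Combining, for every $\tau<\tau_3\wedge C_1|\log\varepsilon|$ I obtain
\[
|Y^{\varepsilon,\delta}(\tau,\xi,x)| \le 2C_0 + \varepsilon^{\gamma+\frac12-C_f'C_1}\bigl(K(\omega)(C_1|\log\varepsilon|)^{\alpha_0}+\delta_\varepsilon\bigr),
\]
and the parenthetical factor tends to $0$ in probability thanks to $\gamma+\tfrac12-C_f'C_1>0$. Hence on an event of probability tending to $1$, $|Y^{\varepsilon,\delta}|<2C_0+\delta'$ throughout $[0,C_1|\log\varepsilon|]$, forcing $\tau_3>C_1|\log\varepsilon|$. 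The main subtlety I expect is the mildly self-referential use of $\tau_3$: Gronwall needs the boundedness that $\tau_3$ itself supplies, so the estimate must be run up to $\tau\wedge\tau_3$ and only then bootstrapped to conclude that the stopping time is large. One must also verify that the enlarged Lipschitz constant $C_f'$ on $[-2C_0-\delta',2C_0+\delta']$ remains compatible with the hypothesis, which is routine by continuity of $f'$ and the freedom to shrink $\delta'$.
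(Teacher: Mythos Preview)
Your argument is correct and reaches the same conclusion, but it follows a different route from the paper. The paper does not compare $Y^{\varepsilon,\delta}$ with the deterministic flow $Y$; instead it introduces an auxiliary stopping time $\tilde\tau_3$ defined through the $W^{1,2p}(D)$ norm (with $p>d/4$), so that Sobolev embedding gives $\tilde\tau_3\le\tau_3$. It then bounds both $Y^{\varepsilon,\delta}$ and its spatial derivative $Z_i^{\varepsilon,\delta}=\partial_{x_i}Y^{\varepsilon,\delta}$ by Gronwall, takes the $L^{2p}$-expectation, and concludes via Chebyshev's inequality that $P(\tilde\tau_3\le C_1|\log\varepsilon|)\to 0$. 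In other words, the paper mirrors the $W^{1,2p}$ moment machinery of Section~\ref{sec4}, at the cost of tracking an extra derivative and invoking the embedding.

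Your approach is more elementary and entirely pathwise: by subtracting the bounded ODE solution $Y$ you avoid the spatial derivative altogether, and the uniform-in-$x$ control comes for free from the sample-path regularity of $W^{Q_d}$ (Lemma~\ref{lem51}) rather than from Sobolev embedding. The only place that deserves a little more care is the extension of Lemma~\ref{lem51} from $[0,1]$ to $[0,C_1|\log\varepsilon|]$: the H\"older constant $K$ then depends on the time horizon, but by Brownian scaling (or a direct moment bound $E\bigl[\sup_{[0,T]\times D}|W^{Q_d}|^p\bigr]\le C_pT^{p/2}$) its growth is polynomial in $T$, hence polylogarithmic in $\varepsilon^{-1}$, which is all you need against the positive power $\varepsilon^{\gamma+\frac12-C_f'C_1}$. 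Your remark on $C_f'$ versus $C_f$ is well taken; the paper's proof is equally informal on this point, and the discrepancy is harmless once $\delta'$ is understood as small.
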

\begin{proof}
Unless $x\in D$, $Y ^{\varepsilon ,\delta}$ equals to the solutions of ODE $Y$, and thus we only consider the case that $x\in D$. Now we set
\begin{align}
\tilde{\tau}  _3 := \inf \{ \tau >0 | \|Y ^{\varepsilon ,\delta} (\tau ,\xi ,\cdot )\|_{W^{1,2p}}>C^{-1} (2C_0+ \delta ') \ for\ some\ \xi \in [-2C_0,2C_0] \ or \ \delta \in (0,\delta _\varepsilon ] \},
\end{align}
where the positive constant $C>0$ comes from Sobolev's inequality $\| \varphi \|_{L^\infty} \leq C \| \varphi \|_{W^{1,2p}}$ for $\varphi \in W^{1,2p}(D)$ for $p>\frac{d}{4}$. We note that $\tilde{\tau}  _3 \leq \tau _3$ almost surely. At first, we get
\begin{align}
|Y_{\tau \wedge \tilde{\tau} _3} ^{\varepsilon ,\delta} | & \leq C_f \int _0 ^{\tau \wedge \tilde{\tau} _3} |Y_s ^{\varepsilon ,\delta} | ds + \varepsilon ^{\gamma +\frac{1}{2}} |W_{\tau \wedge \tilde{\tau} _3} ^{(\delta)} (x)| ,
\end{align}
and Gronwall's inequality and definition of $W_\tau ^{(\delta)} (x)$ give us
\begin{align}
|Y_{\tau \wedge \tilde{\tau} _3} ^{\varepsilon ,\delta} | \leq \exp (C_f \tau )  \varepsilon ^{\gamma +\frac{1}{2}}\left( |W_{\tau \wedge \tilde{\tau} _3} (x)| + \delta _\varepsilon \right) ,
\end{align}
if $\delta \in (0,\delta _\varepsilon ]$. In the same way, we obtain
\begin{align}
|Z_{i} ^{\varepsilon ,\delta} (\tau \wedge \tilde{\tau} _3) | \leq \exp (C_f \tau )  \varepsilon ^{\gamma +\frac{1}{2}}( |\partial _{x_i} W_{\tau \wedge \tilde{\tau} _3} (x)| + \delta _\varepsilon ) .
\end{align}
Here we define $Z _i ^{\varepsilon ,\delta}$ similarly to $Z_i ^\varepsilon$ in Section \ref{sec4}. To sum up these estimates, we get
\begin{align}
&E[ \|Y_{\tau \wedge \tilde{\tau} _3} ^{\varepsilon ,\delta} \|_{W^{1,2p}} ^{2p}] \nonumber \\
&\leq \exp (2pC_f \tau )  \varepsilon ^{p(2\gamma +1)}C\left \{ \left ( \|Q_d(\cdot ,\cdot )\|_{L^{2p}} ^{2p}+\sum _{i=1} ^d \|Q_i(\cdot ,\cdot )\|_{L^{2p}} ^{2p} \right ) \tau + \delta _\varepsilon ^{2p} \right \} ,
\end{align}
for some constant $C>0$ by using an inequality $(a+b)^{2p} \leq 2^{2p} (a^{2p} + b^{2p})$. From Chebyshev's inequality,
\begin{align}
&P(\tau _3 \leq C_1| \log \varepsilon |)  \leq P(\tilde{\tau} _3 \leq C_1| \log \varepsilon |) \leq C^2(2C_0+ \delta ')^{-2} E[ \|Y_{C_1| \log \varepsilon | \wedge \tilde{\tau} _3} ^{\varepsilon ,\delta} \|_{W^{1,2p}} ^{2p}] \nonumber \\
& \leq C^2(2C_0+ \delta ')^{-2}  \varepsilon ^{p(2\gamma +1 - 2C_f C_1)}\left \{ \left ( \|Q_d(\cdot ,\cdot )\|_{L^{2p}} ^{2p}+\sum _{i=1} ^d \|Q_i(\cdot ,\cdot )\|_{L^{2p}} ^{2p} \right ) \tau + \delta _\varepsilon ^{2p} \right \}
\end{align}
completes the proof of the claim.
\end{proof}

\begin{pro}
\label{thm51}
If we set $\tau _4:= \tau_2 \wedge \tau _3 $, then there exists a sequence $\{\delta _\varepsilon \}$ which tends to 0 as $\varepsilon \to 0$, and
\begin{align}
\label{est5-3}
\underset{x \in D}{\sup} \underset{\tau \in [0,C_1| \log \varepsilon | \wedge \tau _4 ]}{\sup} \underset{\xi \in [-2C_0,2C_0]}{\sup} \left| Y^{\varepsilon , \delta} (\tau ,\xi ,x ) -Y^\varepsilon (\tau ,\xi ,x ) \right| \leq \varepsilon^{\gamma -C_f C_1}  \delta \ for \ all \ \delta \in (0,\delta_\varepsilon]
\end{align}
holds $P$-a.s.
\end{pro}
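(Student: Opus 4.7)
The plan is a pathwise Gronwall argument on the integral form of the two equations. Set $R(\tau,\xi,x):=Y^{\varepsilon,\delta}(\tau,\xi,x)-Y^{\varepsilon}(\tau,\xi,x)$. Subtracting (\ref{sde}) and (\ref{ode5-1}) in integral form, the initial values cancel and one obtains
\[
R(\tau,\xi,x)=\int_{0}^{\tau}\bigl(f(Y^{\varepsilon,\delta})-f(Y^{\varepsilon})\bigr)(s,\xi,x)\,ds+\varepsilon^{\gamma+\frac{1}{2}}\bigl(W_{\tau}^{(\delta)}(x)-W_{\tau}(x)\bigr).
\]
On $[0,\tau_{4}]=[0,\tau_{2}\wedge\tau_{3}]$, Lemma \ref{lem52} confines $Y^{\varepsilon,\delta}$ to $[-2C_{0}-\delta',2C_{0}+\delta']$ uniformly in $\delta\in(0,\delta_{\varepsilon}]$, while Proposition \ref{thm43} (encoded in $\tau_{2}$) together with the boundedness of the ODE solution $Y$ places $Y^{\varepsilon}$ in the same compact set for $\varepsilon$ small. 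Hence $f$ is Lipschitz on the relevant range with constant $C_{f}$.

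The second ingredient is a uniform control on the noise difference. By construction of the smooth approximation with the $\omega$-dependent scale $\delta(\omega)=(\delta/K(\omega))^{1/\alpha}$ from Lemma \ref{lem51}, the bound $\sup_{t\ge 0,\,x\in D}|W_{t}^{Q_d}(x)-W_{t}^{(\delta)}(x)|\le\delta$ holds almost surely. Since the time-rescaled noises in (\ref{sde}) and (\ref{ode5-1}) satisfy $W_{\tau}(x)=\varepsilon^{-1/2}W_{\varepsilon\tau}^{Q_d}(x)$ and $W_{\tau}^{(\delta)}(x)=\varepsilon^{-1/2}W_{\varepsilon\tau}^{(\delta)}(x)$, this yields
\[
\varepsilon^{\gamma+\frac{1}{2}}\bigl|W_{\tau}^{(\delta)}(x)-W_{\tau}(x)\bigr|\le \varepsilon^{\gamma}\delta
\]
uniformly in $(\tau,x)$, a.s. Plugged into the integral representation of $R$ it gives
\[
|R(\tau,\xi,x)|\le C_{f}\int_{0}^{\tau}|R(s,\xi,x)|\,ds+\varepsilon^{\gamma}\delta,\qquad \tau\in[0,C_{1}|\log\varepsilon|\wedge\tau_{4}],
\]
uniformly in $\xi\in[-2C_{0},2C_{0}]$ and $x\in D$.

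Gronwall's inequality then produces $|R(\tau,\xi,x)|\le \varepsilon^{\gamma}\delta\,e^{C_{f}\tau}$, and substituting $\tau\le C_{1}|\log\varepsilon|$ gives $e^{C_{f}\tau}\le\varepsilon^{-C_{f}C_{1}}$, yielding the claimed supremum bound $\varepsilon^{\gamma-C_{f}C_{1}}\delta$. I would finally take $\{\delta_{\varepsilon}\}$ to be the sequence from Lemma \ref{lem52} (or any sequence tending to $0$ at least as slowly), so that both the stopping time $\tau_{3}$ and the a.s.\ uniform noise bound remain valid for every $\delta\in(0,\delta_{\varepsilon}]$.

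The main obstacle is bookkeeping the $\varepsilon$-powers: the $\varepsilon^{-1/2}$ coming from time-rescaling the Brownian motion exactly cancels the $\varepsilon^{+1/2}$ in the SDE noise coefficient, so the effective noise difference is $\varepsilon^{\gamma}\delta$, and Gronwall on the logarithmic time scale $C_{1}|\log\varepsilon|$ costs precisely $\varepsilon^{-C_{f}C_{1}}$, producing the stated exponent $\gamma-C_{f}C_{1}$. A secondary but essential point is that the uniform bound on $|W^{Q_d}-W^{(\delta)}|$ must hold simultaneously in $t$ and $x$; this is what justifies the enlargement of $K(\omega)$ noted just before (\ref{ode5-1}), which uses the joint modulus of continuity provided by Lemma \ref{lem51}.
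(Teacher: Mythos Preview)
Your argument is correct and follows essentially the same route as the paper: subtract the integral forms of (\ref{sde}) and (\ref{ode5-1}), use the Lipschitz bound $C_f$ on $f$ (valid because both processes remain in a fixed compact interval up to $\tau_4$), control the noise difference uniformly by the construction of $W^{(\delta)}$, and apply Gronwall over $[0,C_1|\log\varepsilon|]$. Your bookkeeping of the $\varepsilon^{-1/2}$ time-rescaling factor against the $\varepsilon^{\gamma+1/2}$ coefficient is in fact more explicit than the paper's, which writes the noise remainder as $\varepsilon^{\gamma+1/2}\delta$ directly; either way one lands on the stated exponent $\gamma-C_fC_1$ after Gronwall.
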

\begin{proof}
At first, we note that $P(\tau _4 \geq C_1| \log \varepsilon |) \to 1$ as $\varepsilon \to 0$ from Section \ref{sec4} and Lemma \ref{lem52}. And we have that
\begin{align}
\left| Y_{\tau \wedge \tau _4} ^{\varepsilon , \delta} -Y_{\tau \wedge \tau _4} ^\varepsilon \right| & \leq \int _0 ^{\tau \wedge \tau _4} \left| f(Y_s ^{\varepsilon , \delta}) -f(Y_s ^\varepsilon ) \right| ds +\varepsilon ^{\gamma + \frac{1}{2}}|W_\tau ^{(\delta)}(x) - W_\tau (x) | \nonumber \\
&\leq C_f \int _0 ^\tau \left| Y_{s \wedge \tau _4} ^{\varepsilon , \delta}  -Y_{s \wedge \tau _4} ^\varepsilon \right| ds +\varepsilon ^{\gamma + \frac{1}{2}}\delta
\end{align}
for each $x \in D$ and $\xi \in [-2C_0,2C_0]$ because $Y^\varepsilon$ and $Y^{\varepsilon , \delta}$ do not go out of $[-2C_0-\delta ',2C_0 +\delta ']$ until the time $\tau _4$. From Gronwall's inequality, for all $\tau \in [0, C_1| \log \varepsilon | ]$ we have that
\begin{align}
\label{est5-2}
\left| Y ^{\varepsilon , \delta} (\tau \wedge \tau _4 , \xi ,x ) -Y ^\varepsilon (\tau \wedge \tau _4 , \xi ,x ) \right| \leq \varepsilon^{\gamma -C_f C_1}  \delta ,
\end{align}
for each $x \in D$ and $\xi \in [-2C_0,2C_0]$.
\end{proof}

We define $u^{\varepsilon , \delta}$ as a solution of PDE:
\begin{align}
\begin{cases}
\dot{u} ^{\varepsilon , \delta} (t,x) &= \Delta u^{\varepsilon , \delta} (t,x)+\displaystyle{\frac{1}{\varepsilon}} f(u^{\varepsilon , \delta} (t,x) ) + \varepsilon^{\gamma} \dot{W} ^{(\delta )} _t(x),\ \ \ t> 0,\  x\in D,\\
u^{\varepsilon , \delta} (0,x) &= u_0 (x),\ \ \ x\in D, \\
\frac{\partial u}{\partial \nu} (t,x) &= 0,\ \ \ t> 0,\  x\in \partial D
\end{cases}
\end{align}
Now we prove the similar estimate to (\ref{est5-3}) for $u^{\varepsilon , \delta}$. 

\begin{lem}
\label{lem53}
If we set a stopping time
\begin{align}
\tau _5 :=\inf  \{t>0 | |u^{\varepsilon , \delta} (t,x)| > 2C_0\ for \ some \ x\in D \ or \ \delta \in (0,\delta_\varepsilon] \},
\end{align}
then there exists a sequence $\{\delta _\varepsilon \}$ which tends to 0 as $\varepsilon \to 0$, such that $\lim _{\varepsilon \to 0} P(\tau _5 > \varepsilon ^{-n}) =1$ for all $n \in \mathbb{N}$.
\end{lem}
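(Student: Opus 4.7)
The plan is to compare $u^{\varepsilon,\delta}$ to $u^\varepsilon$ in sup norm, then invoke Proposition \ref{thm31} applied to $u^\varepsilon$ to conclude that $u^{\varepsilon,\delta}$ also stays below $2C_0$. Writing both equations in mild form and subtracting, $v:=u^{\varepsilon,\delta}-u^\varepsilon$ satisfies $v(0)=0$ and
\begin{align*}
v(t) = \frac{1}{\varepsilon}\int_0^t e^{(t-s)\Delta}\bigl[f(u^{\varepsilon,\delta}(s))-f(u^\varepsilon(s))\bigr]\,ds + \varepsilon^\gamma \int_0^t e^{(t-s)\Delta}\,d\bigl(W^{(\delta)}_s-W^{Q_d}_s\bigr).
\end{align*}
Setting $X_s:=W^{(\delta)}_s-W^{Q_d}_s$, stochastic integration by parts in $s$ (valid since $s\mapsto e^{(t-s)\Delta}$ is a deterministic $C^1$ operator-valued function, $X_0=0$, and $X_s$ is spatially smooth so that $\Delta$ and $e^{(t-s)\Delta}$ commute) gives
\begin{align*}
Z(t,x) := \int_0^t e^{(t-s)\Delta}\,dX_s(x) = X_t(x) + \int_0^t e^{(t-s)\Delta}\Delta X_s(x)\,ds.
\end{align*}
By the construction of $W^{(\delta)}$ just above Lemma \ref{lem52}, the bounds $\|X_s\|_\infty\le\delta_\varepsilon$ and $\|\Delta X_s\|_\infty\le C\delta_\varepsilon$ hold almost surely, uniformly for all $\delta\in(0,\delta_\varepsilon]$ simultaneously; combined with the $L^\infty$-contractivity of the Neumann heat semigroup (maximum principle), this yields $\|Z(t)\|_\infty\le C(1+t)\delta_\varepsilon$ almost surely.

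Next I would introduce $\sigma_\delta:=\inf\{t>0:\|u^{\varepsilon,\delta}(t)\|_\infty>2C_0\}$ and work on $[0,\rho]$ with $\rho:=\sigma_\delta\wedge\tau_1\wedge\varepsilon^{-n}$, where both $|u^\varepsilon|$ and $|u^{\varepsilon,\delta}|$ are bounded by $2C_0$ so that $|f(u^{\varepsilon,\delta})-f(u^\varepsilon)|\le C_f|v|$. Using $L^\infty$-contractivity again,
\begin{align*}
\|v(t\wedge\rho)\|_\infty \le C\varepsilon^\gamma(1+t)\delta_\varepsilon + \frac{C_f}{\varepsilon}\int_0^t \|v(s\wedge\rho)\|_\infty\,ds,
\end{align*}
and Gronwall gives $\|v(t\wedge\rho)\|_\infty\le C\varepsilon^\gamma(1+t)\delta_\varepsilon\exp(C_f t/\varepsilon)$. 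Choosing $\delta_\varepsilon$ with sufficiently rapid decay---for instance $\delta_\varepsilon=\exp(-\exp(1/\varepsilon))$, for which $\delta_\varepsilon\varepsilon^{-N}\exp(C_f\varepsilon^{-N})\to 0$ for every $N\in\mathbb{N}$---makes this $o(1)$ on $[0,\varepsilon^{-n}]$ for every fixed $n$.

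Finally, on the intersection of this almost sure event with the event from Proposition \ref{thm31} where $\|u^\varepsilon(t)\|_\infty\le C_0+\delta'$ on $[0,\varepsilon^{-n}]$ (with $\delta'$ small enough that $C_0+\delta'<2C_0$), the triangle inequality yields $\|u^{\varepsilon,\delta}(t)\|_\infty\le C_0+\delta'+o(1)<2C_0$ on $[0,\rho]$; this contradicts $\sigma_\delta\le\varepsilon^{-n}$ and, since the argument is uniform in $\delta\in(0,\delta_\varepsilon]$, forces $\tau_5>\varepsilon^{-n}$. Hence $P(\tau_5>\varepsilon^{-n})\to 1$. The main obstacle is the $\exp(C_f t/\varepsilon)$ blow-up from Gronwall at $t=\varepsilon^{-n}$: no polynomial choice of $\delta_\varepsilon$ can beat it, so an iterated-exponentially small diagonal choice is needed to handle every $n$ simultaneously. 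A secondary subtlety is that the pathwise integration-by-parts identity for $Z$ requires the uniform almost sure control not only of $X_s$ but also of $\Delta X_s$, which is precisely why the construction of $W^{(\delta)}$ enforces uniform convergence of the second spatial derivatives.
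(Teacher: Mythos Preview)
Your argument is correct, and you recover the same integration-by-parts identity for the difference of stochastic convolutions that the paper uses (your bound $\|Z(t)\|_\infty\le C(1+t)\delta_\varepsilon$ is exactly the paper's $(t+1)\delta$). The route diverges after that: the paper does \emph{not} compare $u^{\varepsilon,\delta}$ to $u^\varepsilon$ via Gronwall over the long horizon $[0,\varepsilon^{-n}]$. Instead it reopens the proof of Proposition~\ref{thm31} and reruns that boundedness argument directly for $u^{\varepsilon,\delta}$, observing that the only place the noise enters is through the stochastic convolution, which differs from the original one by at most $(t+1)\delta$. Because Proposition~\ref{thm31} is not proved by a crude Lipschitz/Gronwall bound on $f$, this avoids the factor $\exp(C_f t/\varepsilon)$ altogether, and a merely polynomially small choice $\delta_\varepsilon\ll\varepsilon^{n}$ suffices. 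Your approach treats Proposition~\ref{thm31} as a black box and pays for that convenience with the doubly-exponential requirement $\delta_\varepsilon=\exp(-\exp(1/\varepsilon))$; this is harmless for the present lemma (the statement only asks for \emph{some} sequence), but note that elsewhere in the paper a polynomial $\delta_\varepsilon$ is implicitly used, so if one wants a single sequence serving all lemmas, the paper's route is the more economical one.
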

\begin{proof}
We can prove the claim by modifying the proof of Proposition \ref{thm31}. Indeed, from the integration by parts for the normal integral and the stochastic integral, we get
\begin{align}
\label{est6-7}
&\left | \int_{D} \int_0^t  p(t-s,x,y)dW_s ^{(\delta )}(y) dy -\int_{D} \int_0^tp(t-s,x,y)dW_s (y) dy \right | \nonumber \\
&\ \ \ \ \ = \left | (W_t ^{(\delta )}(x) - W_t(x)) - \int _{D}  \int _0 ^t \partial _s p(t-s,x,y)(W_s ^{(\delta )}(y) - W_s (y))dsdy \right | \nonumber \\
&\ \ \ \ \ = \left | (W_t ^{(\delta )}(x) - W_t(x)) + \int _{D}  \int _0 ^t \Delta _y p(t-s,x,y)(W_s ^{(\delta )}(y) - W_s (y))dsdy \right | \nonumber \\
&\ \ \ \ \ = \left | (W_t ^{(\delta )}(x) - W_t(x)) + \int _{D}  \int _0 ^t p(t-s,x,y) \Delta _y(W_s ^{(\delta )}(y) - W_s (y))dsdy \right | \nonumber \\
&\ \ \ \ \ \leq \delta + \int _{D}  \int _0 ^t p(t-s,x,y) dsdy \delta =(t+1)\delta ,
\end{align}
where $p(t,x,y)$ defines the heat semigroup $e^{\Delta t}$ on $L^2 (D)$ as $(e^{\Delta t} \varphi ) (x) := \int _D \varphi (y) p(t,x,y) dy$. We use the condition of the heat kernel in the third line and use Green's divergence theorem in the forth line. Indeed, from Green's theorem, the integration on the boundary $\partial D$
\begin{align}
\int _{\partial D}  \int _0 ^t \left \{ \partial _\nu p(t-s,x)(W_s ^{(\delta )} - W_s ) -  p(t-s,x)\partial _\nu (W_s ^{(\delta )} - W_s ) \right \} dsd\nu
\end{align}
appears where $\nu$ is the exterior unit normal vector on $\partial D$. However, this term is vanished because of the Neumann boundary condition of $p(t,x,\cdot)$ and the condition supp$Q \subset D\times D$. The computation in the second line comes from the fact that $\int _D p(t,x,y)\varphi(y)dy \to \varphi (x)$ as $t\to 0$ for all $\varphi \in C_0^{\infty} (D)$. We note that for each $\omega \in \Omega$ and $t \in [0,\infty)$, $W_t ^{(\delta )}(x)$ and $W_t (x)$ are in the class of $C_0^{\infty} (D)$. These estimates imply that the stochastic convolution term never harm in the proof of Proposition \ref{thm31} if we take $\delta _\varepsilon \ll \varepsilon ^n$.
\end{proof}

We only need to see the time $C_1 \varepsilon |\log \varepsilon |$ in this section. However, we need a same estimate of longer time in Section \ref{sec7}. Thus we take the time $\varepsilon ^{-n}$ which is much longer than $C_1 \varepsilon |\log \varepsilon |$ in Lemma \ref{lem53}.

\begin{pro}
\label{thm52}
If we set $\tau _6:= \tau_1 \wedge \tau _5 $, then for every sequence $\{\delta _\varepsilon \}$ which tends to 0 as $\varepsilon \to 0$ and
\begin{align}
\underset{t \in [0,C_1 \varepsilon |\log \varepsilon | \wedge \tau _6 ]}{\sup}\underset{x \in D}{\sup}|u^{\varepsilon , \delta} (t,x) -u^\varepsilon (t,x)| \leq \varepsilon ^{1+ \gamma- C_f C_1} | \log \varepsilon | \delta \ for \ all \ \delta \in (0,\delta_\varepsilon]
\end{align}
holds $P$-a.s.
\end{pro}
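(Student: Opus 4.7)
The plan is to write both $u^\varepsilon$ and $u^{\varepsilon,\delta}$ in mild form using the Neumann heat semigroup $e^{\Delta t}$ on $D$, subtract the two equations, and control the difference $v^{\varepsilon,\delta}:=u^{\varepsilon,\delta}-u^\varepsilon$ by a Gronwall-type argument on the time interval $[0,C_1\varepsilon|\log\varepsilon|\wedge\tau_6]$. On this interval both solutions remain in $[-2C_0,2C_0]$ by the definition of $\tau_6=\tau_1\wedge\tau_5$, so the reaction term is globally Lipschitz with constant $C_f$, and the entire problem reduces to estimating the difference of two stochastic convolutions.

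More precisely, $v^{\varepsilon,\delta}$ satisfies the integral equation
\begin{align*}
v^{\varepsilon,\delta}(t,x) &= \frac{1}{\varepsilon}\int_0^t\!\!\int_D p(t-s,x,y)\bigl[f(u^{\varepsilon,\delta}(s,y))-f(u^\varepsilon(s,y))\bigr]dy\,ds \\
&\quad + \varepsilon^\gamma\!\int_0^t\!\!\int_D p(t-s,x,y)\bigl(dW_s^{(\delta)}(y)-dW_s(y)\bigr)dy.
\end{align*}
The first term is bounded, up to the stopping time $\tau_6$, by $\tfrac{C_f}{\varepsilon}\int_0^t\sup_{y\in D}|v^{\varepsilon,\delta}(s\wedge\tau_6,y)|\,ds$ after using $\int_D p(t-s,x,y)\,dy\le 1$ (Neumann conservation). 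The second term is exactly the object already controlled in the proof of Lemma \ref{lem53} via estimate \eqref{est6-7}: integrating by parts in time, then using the heat equation and Green's theorem with the Neumann condition and the compact support of $Q_d$, one obtains the pathwise bound $\varepsilon^\gamma(t+1)\delta$ for every $\delta\in(0,\delta_\varepsilon]$ and every $x\in D$.

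Combining these two estimates yields
\begin{equation*}
\sup_{x\in D}|v^{\varepsilon,\delta}(t\wedge\tau_6,x)|\le \frac{C_f}{\varepsilon}\int_0^t\sup_{y\in D}|v^{\varepsilon,\delta}(s\wedge\tau_6,y)|\,ds+\varepsilon^\gamma(t+1)\delta,
\end{equation*}
and Gronwall's inequality gives
\begin{equation*}
\sup_{x\in D}|v^{\varepsilon,\delta}(t\wedge\tau_6,x)|\le \varepsilon^\gamma(t+1)\delta\,\exp\!\Bigl(\tfrac{C_f t}{\varepsilon}\Bigr).
\end{equation*}
Substituting $t=C_1\varepsilon|\log\varepsilon|$ converts the exponential into $\varepsilon^{-C_fC_1}$ and the prefactor into $O(\varepsilon|\log\varepsilon|)$, producing the claimed bound $\varepsilon^{1+\gamma-C_fC_1}|\log\varepsilon|\,\delta$.

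The main obstacle is not the Gronwall step, which is routine, but rather the justification that the stochastic-convolution difference admits the deterministic pathwise bound $\varepsilon^\gamma(t+1)\delta$ uniformly in $x$; this rests on the integration-by-parts identity \eqref{est6-7} together with the fact that all the boundary integrals generated by Green's theorem vanish because $\operatorname{supp} Q_d\subset D\times D$ and $p(t,x,\cdot)$ satisfies the Neumann condition. Once these two facts are in hand the rest of the argument proceeds exactly as in Proposition \ref{thm51}, with the stopping time $\tau_6$ replacing $\tau_4$ and the mild-form heat-kernel smoothing replacing the pointwise-in-$x$ ODE estimate.
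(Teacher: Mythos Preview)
Your proposal is correct and follows essentially the same approach as the paper's own proof: write the difference $u^{\varepsilon,\delta}-u^\varepsilon$ in mild form, bound the stochastic-convolution difference by the pathwise estimate \eqref{est6-7} from Lemma~\ref{lem53}, use the Lipschitz bound $C_f$ on $f$ (valid up to $\tau_6$) together with $\int_D p(t-s,x,y)\,dy=1$, and close with Gronwall. The only cosmetic difference is that the paper replaces $\varepsilon^\gamma(t+1)\delta$ by $C_1\varepsilon^{1+\gamma}|\log\varepsilon|\,\delta$ \emph{before} applying Gronwall, whereas you carry the $(t+1)$ factor through and substitute at the end; the outcome is the same.
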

\begin{proof}
At first, we consider the mild form of $u^{\varepsilon , \delta} (t,x) (\omega ) -u^\varepsilon (t,x)   (\omega )$. Then, since the initial values are same, we get
\begin{align}
\label{est5-5}
u^{\varepsilon , \delta} (t,x) -u^\varepsilon (t,x) = \frac{1}{\varepsilon} \int _0 ^t \int _D &p(t-s,x,y)\{ f(u^{\varepsilon , \delta}(s,y))-f(u^{\varepsilon}(s,y)) \} dyds\nonumber \\
&+ \varepsilon ^{\gamma}\int_D \int_0^tp(t-s,x,y)dW_s ^{(\delta )}(y) dy\nonumber \\
&- \varepsilon ^{\gamma}\int_D \int_0^tp(t-s,x,y)dW_s(y) dy
\end{align}
where $p(t,x,y)$ is defined in the proof of Lemma \ref{lem53}. From the estimate (\ref{est6-7}) in the proof of Lemma \ref{lem53} and the definition of $\tau _6$, we get
\begin{align}
\label{est5-6}
|u^{\varepsilon , \delta} (t,x) - & u^\varepsilon (t,x)| \leq \frac{1}{\varepsilon} \int _0 ^t \int _D p(t-s,x,y)C_f |u^{\varepsilon , \delta}(s,y)-u^{\varepsilon}(s,y) | dyds + \varepsilon ^{\gamma} (t+1) \delta \nonumber \\
& \leq \frac{1}{\varepsilon} \int _0 ^t \int _D p(t-s,x,y)C_f \left( \underset{z \in D}{\sup}|u^{\varepsilon , \delta}(s,z)-u^{\varepsilon }(s,z) |\right) dyds + \varepsilon ^{\gamma} (t+1) \delta \nonumber \\
& \leq \frac{C_f}{\varepsilon} \int _0 ^t \left( \underset{x \in D}{\sup}|u^{\varepsilon , \delta}(s,x)-u^{\varepsilon}(s,x) |\right) ds + C_1 \varepsilon ^{1+ \gamma} | \log \varepsilon | \delta
\end{align}
for each $t \in [0,C_1 \varepsilon |\log \varepsilon | \wedge \tau _6 ]$, because the integration of the heat kernel $p(t,x,y)$ with respect to $y \in D$ is 1. If we take the supremum of the left hand side on $x\in D$, (\ref{est5-6}) becomes the form which we can apply Gronwall's inequality to $\sup _{x \in D}|u^{\varepsilon , \delta}(t,x)-u^{\varepsilon}(t,x) |$. And thus we obtain the following estimates by Gronwall's inequality.
\begin{align}
\underset{x \in D}{\sup}|u^{\varepsilon , \delta} (t,x) -u^\varepsilon (t,x)| &\leq C \varepsilon ^{1+ \gamma- C_f C_1} | \log \varepsilon | \delta .
\end{align}
In the same way as the proof of Proposition \ref{thm51}, we can prove the claim of this lemma.
\end{proof}

\section{Comparison arguments}
\label{sec6}
In this section, we apply the comparison theorem for PDE to $u^{\varepsilon , \delta}$ and $w_{\varepsilon , \delta} ^\pm$. And we prove that $u^{\varepsilon , \delta}$ and $w_{\varepsilon , \delta} ^\pm$ converge to $u^{\varepsilon }$ and $w_{\varepsilon } ^\pm$ uniformly from the results of Section \ref{sec5}. Our claim in this section is formulated in the following Theorem. The argument in this section is based on the methods of Alfaro et al \cite{ham}.

\begin{pro}
\label{thm61}
For every $0 < C_1 < \frac{1}{\mu}$,
\begin{align}
 w_{\varepsilon} ^- (t,x) \leq u^{\varepsilon} (t,x) \leq w_{\varepsilon } ^+ (t,x)\ for\ every\  t \in [0, C_1 \varepsilon |\log \varepsilon | \wedge \varepsilon \tau _4 \wedge \tau _6 \wedge \varepsilon \tau _7],\  x \in D
\end{align}
holds $P$-a.s.
\end{pro}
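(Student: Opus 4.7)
The plan is to move to the smooth approximations $u^{\varepsilon,\delta}$ and $w_{\varepsilon,\delta}^{\pm}$, use the deterministic parabolic comparison theorem path-by-path, and then send $\delta\to 0$ using the uniform approximations of Propositions \ref{thm51} and \ref{thm52}. The parabolic comparison theorem is applicable thanks to condition (iv) of (\ref{reaction}).

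The central step is to verify the supersolution property
\begin{align*}
\mathcal{L}_{\varepsilon,\delta}[w_{\varepsilon,\delta}^+]:=\partial_t w_{\varepsilon,\delta}^+-\Delta w_{\varepsilon,\delta}^+-\tfrac{1}{\varepsilon}f(w_{\varepsilon,\delta}^+)-\varepsilon^{\gamma}\dot{W}^{(\delta)}_t(x)\ \geq\ 0,
\end{align*}
and the symmetric inequality for $w_{\varepsilon,\delta}^-$. Writing $\xi^+(t,x)=u_0^+(x)+\varepsilon C_2(e^{\mu t/\varepsilon}-1)$, the chain rule applied to $w_{\varepsilon,\delta}^+=Y^{\varepsilon,\delta}(t/\varepsilon,\xi^+(t,x),x)$ together with the ODE (\ref{ode5-1}) and the scaling of $\widetilde{W}^{(\delta)}_\tau$ yields
\begin{align*}
\mathcal{L}_{\varepsilon,\delta}[w_{\varepsilon,\delta}^+]=\mu C_2 e^{\mu t/\varepsilon}\partial_\xi Y^{\varepsilon,\delta}-\partial_{\xi\xi}Y^{\varepsilon,\delta}|\nabla u_0^+|^2-\partial_\xi Y^{\varepsilon,\delta}\Delta u_0^+-2\sum_i \partial_{\xi x_i}Y^{\varepsilon,\delta}\partial_{x_i}u_0^+-\Delta_x Y^{\varepsilon,\delta}.
\end{align*}
The key observation is that $\partial_\xi Y^{\varepsilon,\delta}$ inherits from its ODE counterpart an exponential growth of rate $\mu$ near the unstable zero of $f$, whereas the other derivatives remain moderate with high probability, so on a suitable event the leading term dominates the correction terms once $C_2$ is taken large enough.

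This is where the auxiliary stopping time $\tau_7$ enters. I would define $\tau_7$ to be the first $\tau$ at which any of the derivatives $\partial_\xi Y^{\varepsilon,\delta}$, $\partial_{\xi\xi}Y^{\varepsilon,\delta}$, $\nabla_x Y^{\varepsilon,\delta}$, $\nabla_\xi\nabla_x Y^{\varepsilon,\delta}$, $\Delta_x Y^{\varepsilon,\delta}$ deviates from its deterministic ODE counterpart by more than $\varepsilon^{\kappa}$ in a Sobolev norm strong enough to control the supremum on $[0,T_\varepsilon]\times\overline{D}$. The estimate $P(\varepsilon\tau_7\geq C_1\varepsilon|\log\varepsilon|)\to 1$ would follow by the same Ito--Sobolev embedding scheme as in Lemmas \ref{lem41}--\ref{lem42} applied to the higher-order derivative SDEs obtained by further differentiation of (\ref{sde}) and (\ref{sde2}). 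On the event $\{\tau\leq\tau_7\}$, the ODE facts of Section \ref{sec3} give $\partial_\xi Y^{\varepsilon,\delta}\geq c\,e^{\mu\tau}/2$ away from the stability zones, while the other quantities are bounded by $C\|u_0^+\|_{C^2}(1+e^{\mu t/\varepsilon})$, so a sufficiently large $C_2$ makes $\mathcal{L}_{\varepsilon,\delta}[w_{\varepsilon,\delta}^+]\geq 0$ pointwise on $[0,C_1\varepsilon|\log\varepsilon|\wedge\varepsilon\tau_4\wedge\tau_6\wedge\varepsilon\tau_7]\times\overline{D}$. The Neumann boundary inequalities are preserved by the modification of $u_0$ near $\partial D$ as in Section 3.3 of \cite{ham}, and the initial ordering is automatic.

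With the super/subsolution inequalities in hand, the parabolic comparison theorem gives $w_{\varepsilon,\delta}^-\leq u^{\varepsilon,\delta}\leq w_{\varepsilon,\delta}^+$ on that time interval. I would then pick a common sequence $\delta_\varepsilon\to 0$ for which Propositions \ref{thm51} and \ref{thm52} simultaneously apply; $w_{\varepsilon,\delta_\varepsilon}^\pm\to w_\varepsilon^\pm$ and $u^{\varepsilon,\delta_\varepsilon}\to u^\varepsilon$ uniformly $P$-a.s., and passage to the limit preserves the inequality. The main obstacle is the supersolution verification: it requires pathwise control of second-order $\xi$-derivatives and of first- and second-order $x$-derivatives of the SDE flow $Y^{\varepsilon,\delta}$, together with the quantitative exponential growth of $\partial_\xi Y^{\varepsilon,\delta}$ uniformly in $\xi\in[-2C_0,2C_0]\setminus(-\varepsilon^{\alpha},\varepsilon^{\alpha})$; initial data lying in the critical strip $(-\varepsilon^{\alpha},\varepsilon^{\alpha})$ is precisely absorbed by the shift $\pm\varepsilon C_2(e^{\mu t/\varepsilon}-1)$, which after time of order $\varepsilon|\log\varepsilon|$ lifts $\xi^\pm(t,x)$ out of that strip.
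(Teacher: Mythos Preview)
Your overall architecture is correct and matches the paper: pass to the smooth approximations, check the super/subsolution inequalities pathwise by computing $\mathcal{L}_{\varepsilon,\delta}[w_{\varepsilon,\delta}^\pm]$, apply the parabolic comparison theorem, and then let $\delta\to 0$ via Propositions~\ref{thm51} and~\ref{thm52}. Your chain-rule computation of $\mathcal{L}_{\varepsilon,\delta}[w_{\varepsilon,\delta}^+]$ is also the right one.

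The gap is in how you bound the resulting expression. You assert that $\partial_\xi Y^{\varepsilon,\delta}\geq c\,e^{\mu\tau}/2$ ``away from the stability zones'' and that ``the other quantities are bounded by $C\|u_0^+\|_{C^2}(1+e^{\mu t/\varepsilon})$''. Neither holds in the form you need. First, the second $\xi$-derivative grows like $e^{2\mu\tau}$, not $e^{\mu\tau}$: from $Y_\xi=\exp\bigl(\int_0^\tau f'(Y)\bigr)$ one gets $Y_{\xi\xi}=Y_\xi\int_0^\tau Y_\xi f''(Y)$, so in the unstable regime $Y_{\xi\xi}\sim e^{2\mu\tau}$, and the term $|\nabla u_0^+|^2\,\partial_{\xi\xi}Y^{\varepsilon,\delta}$ cannot be absorbed by $\mu C_2 e^{\mu t/\varepsilon}\partial_\xi Y^{\varepsilon,\delta}$ using only a lower bound on $\partial_\xi Y^{\varepsilon,\delta}$ and an $e^{\mu\tau}$-upper bound on the rest. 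Second, once $Y$ enters a neighbourhood of $\pm 1$ (which happens well before time $C_1|\log\varepsilon|$ for many initial data), $\partial_\xi Y^{\varepsilon,\delta}$ \emph{decays}, so the exponential lower bound is not available uniformly over the whole time interval and over $\xi\in[-2C_0,2C_0]$.

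The paper resolves both issues simultaneously by factoring $Y_\xi^{\varepsilon,\delta}$ out of the dangerous terms and controlling the ratio $A^{\varepsilon,\delta}:=Y_{\xi\xi}^{\varepsilon,\delta}/Y_\xi^{\varepsilon,\delta}$ rather than the numerator and denominator separately. The key estimate is Proposition~\ref{thm62}: $|A^{\varepsilon,\delta}(\tau,\xi,x)|\leq C_5(e^{\mu\tau}-1)$ uniformly over $\xi\in[-2C_0,2C_0]$, proved by splitting into the unstable regime (Lemma~\ref{lem61}) and the stable regime (Lemma~\ref{lem62}). With this, the bracket in $\mathcal{L}(w_{\varepsilon,\delta}^+)$ becomes $\bigl[\mu C_2 e^{\mu t/\varepsilon}-|\nabla u_0^+|^2 A^{\varepsilon,\delta}-\Delta u_0^+-\cdots\bigr]Y_\xi^{\varepsilon,\delta}$; since $A^{\varepsilon,\delta}$ carries exactly one factor $e^{\mu\tau}$, a sufficiently large $C_2$ makes the bracket nonnegative uniformly, and positivity of $Y_\xi^{\varepsilon,\delta}$ (immediate from the exponential formula) finishes the job. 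The remaining $x$-derivative terms are handled by Lemmas~\ref{lem64} and~\ref{lem65}, for which the paper's $\tau_7$ is defined via the $H^{2+k}$ norm of the noise $W_\tau$, not via derivatives of $Y^{\varepsilon,\delta}$ as you propose. Your redefinition of $\tau_7$ is not in itself fatal, but it does not supply the missing $A$-ratio control.
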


\subsection{Auxiliary estimates}
Before proving the proposition, we need to prove some lemmas as preparations in advance. We define the following stochastic process.
\begin{align}
\label{def5-1}
A^{\varepsilon , \delta} (\tau ,\xi ,x ) := \frac{Y_{\xi \xi } ^{\varepsilon , \delta} (\tau ,\xi ,x )}{Y_\xi ^{\varepsilon , \delta} (\tau ,\xi ,x )},
\end{align}
where $Y_\xi ^{\varepsilon , \delta}$ and $Y_{\xi \xi} ^{\varepsilon , \delta}$ mean the derivatives of $Y^{\varepsilon , \delta}$ with respect to $\xi$. Because $W_\tau ^{(\delta )} (x)$ is independent of $\xi$, we can compute $Y_\xi ^{\varepsilon , \delta}$ and $Y_{\xi \xi} ^{\varepsilon , \delta}$ from the form of the ODE (\ref{ode5-1}). We get an ODE
\begin{align}
\label{ode6-1}
\begin{cases}
{Y}_{\xi \tau} ^{\varepsilon , \delta} (\tau, \xi ,x) = {Y}_\xi ^{\varepsilon , \delta}(\tau, \xi ,x )  f'( Y ^{\varepsilon , \delta}(\tau, \xi ,x ) ),\ \ \ \tau > 0,\ x\in D \\
Y ^{\varepsilon , \delta} _\xi (0, \xi ,x)=1,
\end{cases}
\end{align}
and we obtain
\begin{align}
\label{eq5-1}
{Y}_\xi ^{\varepsilon , \delta} (\tau, \xi ,x) = \exp \left( \int _0 ^\tau f'\left( Y^{\varepsilon , \delta}(s,\xi ,x )\right) ds \right),\ \ \ \tau \geq0,\ x\in D
\end{align}
from (\ref{ode6-1}). In particular, ${Y}_\xi ^{\varepsilon , \delta}>0$ allows us to define $A ^{\varepsilon , \delta} (\tau, \xi ,x)$ as in (\ref{def5-1}). Now we get 
\begin{align}
A ^{\varepsilon , \delta} (\tau, \xi ,x) = \int _0 ^\tau Y_\xi ^{\varepsilon , \delta}(s,\xi ,x ) f''\left( Y^{\varepsilon , \delta}(s,\xi ,x )\right) ds,\ \ \ \tau \geq 0,\ x\in D
\end{align}
by computing $Y_{\xi \xi} ^{\varepsilon , \delta}$ from (\ref{eq5-1}). Now we derive estimates on $A ^{\varepsilon , \delta}$ and use these estimates for the proof of Proposition \ref{thm61}. We divide into two cases. At first, we prove the case that $\xi>C\varepsilon ^\alpha$ is less than 1.

\begin{lem}
\label{lem61}
For all $\eta \in (0,1)$, there exists a sequence $\{\delta _\varepsilon \}$ which tends to 0 as $\varepsilon \to 0$ and $C_3( \eta )>0$ such that
\begin{align}
\underset {\delta \in (0,\delta_\varepsilon )}{\sup} \underset {\xi \in (C \varepsilon ^\alpha,1-\eta)}{\sup} \underset {x \in D}{\sup} |A^{\varepsilon ,\delta} (\tau, \xi ,x)|\leq C_3 (e^{\mu \tau}-1) \ for\ all\ \tau \in [0,C_1 |\log \varepsilon | \wedge T_1 \wedge \tau _4] 
\end{align}
hold $P$-a.s. Here, let $T_1$ be the time defined by $T_1:= \inf \{ \tau \geq 0|Y(\tau ,\xi ) = 1-\eta \}$.
\end{lem}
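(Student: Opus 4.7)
The plan is to exploit the explicit representation of $A^{\varepsilon,\delta}$ as an integral, control $Y^{\varepsilon,\delta}_\xi$ by a deterministic exponential, and then absorb small stochastic fluctuations into a harmless multiplicative constant.

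First I would differentiate the ODE (\ref{ode5-1}) in $\xi$ twice (exactly as done before the statement) to re-derive the explicit formulas
\[
Y_\xi^{\varepsilon,\delta}(\tau,\xi,x) = \exp\!\left(\int_0^\tau f'(Y^{\varepsilon,\delta}(s,\xi,x))\,ds\right),\qquad
A^{\varepsilon,\delta}(\tau,\xi,x) = \int_0^\tau Y_\xi^{\varepsilon,\delta}(s,\xi,x)\,f''(Y^{\varepsilon,\delta}(s,\xi,x))\,ds.
\]
From here the whole task reduces to a good almost-sure upper bound on $Y_\xi^{\varepsilon,\delta}$, because $|f''|$ is bounded on the relevant range (the stopping times $\tau_3$ and $\tau_4$ keep $Y^{\varepsilon,\delta}$ inside a fixed compact set).

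Next I would transfer the bound from the deterministic flow $Y$ to $Y^{\varepsilon,\delta}$. Combining Proposition \ref{thm43} (which gives $|Y^\varepsilon - Y|\le \varepsilon^\kappa$ before $\tau_2$) with Proposition \ref{thm51} (which gives $|Y^{\varepsilon,\delta} - Y^\varepsilon|\le \varepsilon^{\gamma - C_fC_1}\delta$ before $\tau_3$), we obtain, $P$-a.s., for $\delta$ below some $\delta_\varepsilon\to 0$,
\[
\sup_{x\in D}\,\sup_{\xi\in(C\varepsilon^\alpha,1-\eta)}\,\sup_{\tau\in[0,C_1|\log\varepsilon|\wedge T_1\wedge \tau_4]}\bigl|Y^{\varepsilon,\delta}(\tau,\xi,x) - Y(\tau,\xi)\bigr| \le 2\varepsilon^\kappa.
\]
On this event, since $Y(s,\xi)\in[\xi,1-\eta]\subset[0,1-\eta]$ for $s\le T_1$, we have $f'(Y(s,\xi))\le \mu$ (this is the pointwise bound on $f'$ near the unstable zero coming from conditions (i),(ii),(v) of (\ref{reaction}), which is the whole reason $\mu=f'(0)$ is singled out in the generation rate). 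The Lipschitz estimate on $f'$ then gives
\[
f'(Y^{\varepsilon,\delta}(s,\xi,x)) \le \mu + \|f''\|_\infty\cdot 2\varepsilon^\kappa.
\]

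Plugging this into the exponential formula and using that $\tau \le C_1|\log\varepsilon|$ yields
\[
Y_\xi^{\varepsilon,\delta}(\tau,\xi,x)\le \exp\!\bigl(\mu\tau + 2C_1\|f''\|_\infty\,\varepsilon^\kappa|\log\varepsilon|\bigr) \le C_4(\eta)\,e^{\mu\tau},
\]
because $\varepsilon^\kappa|\log\varepsilon|\to 0$, so the stochastic correction is absorbed into an $\varepsilon$-independent constant. Integrating the formula for $A^{\varepsilon,\delta}$ then gives
\[
|A^{\varepsilon,\delta}(\tau,\xi,x)| \le \|f''\|_\infty\int_0^\tau C_4(\eta)\,e^{\mu s}\,ds \le \frac{C_4(\eta)\,\|f''\|_\infty}{\mu}\,(e^{\mu\tau}-1),
\]
and setting $C_3(\eta):=C_4(\eta)\|f''\|_\infty/\mu$ completes the proof, with all three suprema automatically accommodated since the estimates above are uniform in $x$, $\xi$ and $\delta\le\delta_\varepsilon$.

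The main subtlety I expect is the propagation of the approximation error through the exponential: a naive Gronwall-type bound on $Y_\xi^{\varepsilon,\delta}$ could produce a factor like $e^{C_f\tau}$ rather than $e^{\mu\tau}$, which would be too weak (since the required later application of this lemma needs exactly the rate $\mu$). The trick is that we do not bound $f'(Y^{\varepsilon,\delta})$ by its global Lipschitz constant, but rather use the approximate identity $Y^{\varepsilon,\delta}\approx Y$ together with the structural bound $f'\le \mu$ on $[0,1-\eta]$; this is why the lemma is stated with the explicit cutoff $\xi<1-\eta$ and up to the time $T_1$ where $Y$ reaches $1-\eta$.
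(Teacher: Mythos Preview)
Your overall architecture is the same as the paper's: reduce to an upper bound $Y_\xi^{\varepsilon,\delta}\le C\,e^{\mu\tau}$ and then integrate the formula for $A^{\varepsilon,\delta}$. The transfer step from $Y$ to $Y^{\varepsilon,\delta}$ via Propositions~\ref{thm43} and~\ref{thm51} is also exactly what the paper does.

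The gap is the assertion that $f'(Y(s,\xi))\le\mu$ for $Y(s,\xi)\in[0,1-\eta]$. This does \emph{not} follow from conditions (i), (ii), (v) of (\ref{reaction}); nothing in the hypotheses forces $f'$ to attain its maximum at $0$. For instance, $f(u)=u(1-u^2)(1+2u^2)=u+u^3-2u^5$ satisfies (i)--(vi) with $\mu=f'(0)=1$, yet $f'(0.3)=1+3(0.09)-10(0.0081)\approx 1.19>\mu$. With such an $f$ your pointwise bound fails, and as you yourself note, replacing $\mu$ by $C_f$ would give the useless rate $e^{C_f\tau}$.

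What the paper does instead is invoke Lemma~3.4 of \cite{ham} for the \emph{deterministic} flow, which yields $\widetilde C_1 e^{\mu\tau}\le Y_\xi(\tau,\xi)\le\widetilde C_2 e^{\mu\tau}$ on $[0,T_1]$ without any pointwise control on $f'$. The mechanism behind that lemma is the identity $Y_\xi(\tau,\xi)=f(Y(\tau,\xi))/f(\xi)$ (coming from $\dot Y=f(Y)$), together with the fact that $\int_0^{1-\eta}\frac{f'(u)-\mu}{f(u)}\,du$ is finite because $f'(u)-\mu=O(u)$ and $f(u)\sim\mu u$ near $0$; this gives the integrated bound $\int_0^\tau f'(Y(s,\xi))\,ds\le \mu\tau+O_\eta(1)$ even when $f'>\mu$ on part of $(0,1)$. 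The paper then writes
\[
Y_\xi^{\varepsilon,\delta}=Y_\xi\cdot\exp\!\Big(\int_0^\tau\{f'(Y^{\varepsilon,\delta})-f'(Y)\}\,ds\Big)
\]
and uses $|Y^{\varepsilon,\delta}-Y|\le C\varepsilon^\kappa$ to bound the second factor by $\exp(C\varepsilon^\kappa|\log\varepsilon|)\le C'$, which combined with the $\widetilde C_2 e^{\mu\tau}$ bound on $Y_\xi$ gives exactly the estimate you were after. Once you replace your unjustified pointwise claim by this citation (or by the integral argument sketched above), the rest of your proof goes through unchanged.
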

\begin{proof}
We fix a time $\tau \in [0,C_1 |\log \varepsilon | \wedge T_1 \wedge \tau _4]$. By applying Lemma 3.4 in \cite{ham}, we can assert that there exist positive constants $\widetilde{C} _1(\eta ),\widetilde{C} _2(\eta )>0$ such that
\begin{align}
\label{est6-5}
\widetilde{C} _1 e^{\mu \tau} \leq Y_\xi (\tau ,\xi ) \leq \widetilde{C} _2 e^{\mu \tau}
\end{align}
before the time $T_1$. And we define $\delta _\varepsilon :=\exp(-\varepsilon ^{-\beta})$ for any $\beta >0$, then we obtain for all $\delta \in (0,\delta_\varepsilon )$, $\tau \in [0,C_1|\log \varepsilon |\wedge \tau _4]$, $\xi \in (C \varepsilon ^\alpha,1-\eta)$ and $x \in D$
\begin{align}
 |Y^{\varepsilon, \delta}(\tau ,\xi ,x ) -Y^{\varepsilon}(\tau ,\xi ,x)| \leq \varepsilon^{\gamma -C_f C_1} \delta \leq \varepsilon^{\gamma} \exp (-\varepsilon ^{-\beta}) \leq C \varepsilon ^\kappa
\end{align}
from (\ref{est5-3}) of Proposition \ref{thm51} for sufficiently small $\varepsilon>0$. Moreover the estimate in Proposition \ref{thm43} gives us
\begin{align}
\label{est6-4}
\underset{\tau \in [0,C_1|\log \varepsilon | \wedge \tau _4]}{\sup}\underset {\xi \in (C \varepsilon ^\alpha,1-\eta)}{\sup} \underset {x \in D}{\sup} |Y^{\varepsilon, \delta}(\tau ,\xi ,x ) -Y(\tau ,\xi )|\leq C\varepsilon ^\kappa.
\end{align}
And thus, we obtain the following estimates.
\begin{align}
\label{est6-6}
|Y_\xi ^{\varepsilon ,\delta}(\tau , \xi ,x )-Y_\xi (\tau , \xi )| & = 
Y_\xi (\tau , \xi )\left| \exp \left[ \int _0 ^\tau \left \{ f'( Y ^{\varepsilon ,\delta}(s , \xi ,x ) ) -f'( Y (s , \xi )) \right \} ds \right] -1 \right| \nonumber \\
& \leq Y_\xi (\tau , \xi )\left| \exp \left[ \int _0 ^\tau \left| f'( Y ^{\varepsilon ,\delta}(s , \xi ,x ) ) -f'( Y (s , \xi )) \right| ds \right] -1 \right| \nonumber \\
& \leq Y_\xi (\tau , \xi )\left| \exp (C\varepsilon ^{\kappa })-1 \right| \ \ \ ( \tau \in [0,C|\log \varepsilon | \wedge T_1 \wedge \tau _4]).
\end{align}
The first line is easily proved from (\ref{eq5-1}) because we can compute $Y_\xi$ in the same way as $Y_\xi ^{\varepsilon ,\delta}$. We used the inequality $|e^x -1|<|e^{|x|}-1|$ for the second line. Third line is from the differentiability of $f$, the behavior of $Y$ and $Y^{\varepsilon ,\delta}$ from (\ref{est6-4}). We use inequality $|e^a -1|<|e^b-1|\ (b\geq a\geq 0)$ in the second and the third line. If we take $\varepsilon$ sufficiently small, there exists $C' _1(\eta )>0$ which satisfies
\begin{align}
C' _1e^{\mu \tau} &\leq \widetilde{C}_1(1-|\exp (C\varepsilon ^{\kappa })-1|)e^{\mu \tau} \leq Y_\xi(\tau ,\xi )(1-|\exp (C\varepsilon ^{\kappa })-1|) \leq Y_\xi ^{\varepsilon ,\delta}(\tau ,\xi ,x)
\end{align}
for all $x\in D$, $\xi \in (C \varepsilon ^\alpha,1-\eta)$, $\tau \in [0,C_1 |\log \varepsilon | \wedge T_1 \wedge \tau _4]$ and $\delta \in (0,\delta _\varepsilon)$ from (\ref{est6-5}) and (\ref{est6-6}). In the same way, there exists $C' _2(\eta )>0$ such that
\begin{align}
 Y_\xi ^{\varepsilon ,\delta}(\tau ,\xi ,x ) \leq C' _2e^{\mu \tau}.
\end{align}
The rest of the proof is the same as Lemma 3.4 of \cite{ham}. Indeed, we get
\begin{align}
|A^{\varepsilon , \delta} (\tau ,\xi ,x)| \leq \| f'' \|_{L^\infty ([-1,1])}\int _0 ^\tau (C' _1 \vee C' _2 )e^{\mu s}ds \leq C_3(e^{\mu \tau} -1)
\end{align}
for some $C_3(\eta )$, and this completes the proof of the lemma.
\end{proof}

Next we consider the case of $\xi \in [1 - \eta,1+M]$.
\begin{lem}
\label{lem62}
For all $\eta \in (0,1)$ and $M>0$, there exist a sequence $\{\delta _\varepsilon \}$ which tends to 0 as $\varepsilon \to 0$ and a positive constant $C_4( \eta, M )>0$ such that
\begin{align}
\underset {\delta \in (0,\delta_\varepsilon )}{\sup} \underset {\xi \in [1-\eta ,1+M]}{\sup} \underset {x \in D}{\sup} |A^{\varepsilon ,\delta} (\tau, \xi ,x)| \leq C_4 \tau \ for\ all \  \tau \in [0,C_1 |\log \varepsilon | \wedge \tau _4]
\end{align}
holds $P$-a.s.
\end{lem}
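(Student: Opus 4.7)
\textbf{Proof plan for Lemma \ref{lem62}.}
Starting from the explicit formula
\[
A^{\varepsilon,\delta}(\tau,\xi,x)=\int_0^\tau Y_\xi^{\varepsilon,\delta}(s,\xi,x)\,f''\bigl(Y^{\varepsilon,\delta}(s,\xi,x)\bigr)\,ds
\]
derived before the statement together with (\ref{eq5-1}), the whole estimate reduces to controlling $|Y_\xi^{\varepsilon,\delta}|$ and showing that the integrand is in fact (after a fixed time) exponentially decaying. The key difference from Lemma \ref{lem61} is that the initial values $\xi\in[1-\eta,1+M]$ already lie in the basin of attraction of the stable point $u=1$, so we expect $Y_\xi^{\varepsilon,\delta}$ to remain bounded in $s$ instead of growing like $e^{\mu s}$.

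First, I would pick $\eta_0=\eta_0(f)\in(0,1)$ small enough that $f'(u)\leq -p/2$ for every $u\in[1-\eta_0,1+\eta_0]$, which is possible by continuity and $f'(1)=-p$. Using (i), (ii), (vi) in (\ref{reaction}), the ODE trajectory $Y(\cdot,\xi)$ is monotone on $[1-\eta,1+M]$ and converges to $1$, so compactness of the initial slab and continuous dependence yield a \emph{deterministic} time $T_2=T_2(\eta,M,\eta_0)>0$, independent of $\varepsilon$ and $\delta$, such that $Y(s,\xi)\in[1-\eta_0/2,1+\eta_0/2]$ for all $s\geq T_2$ and every $\xi\in[1-\eta,1+M]$. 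Since $\xi\geq 1-\eta>0$ is bounded away from zero, Propositions \ref{thm43} and \ref{thm51} furnish a sequence $\delta_\varepsilon\to 0$ such that, $P$-a.s.,
\[
|Y^{\varepsilon,\delta}(s,\xi,x)-Y(s,\xi)|\leq C\varepsilon^\kappa\qquad (s\leq C_1|\log\varepsilon|\wedge\tau_4,\ \delta\in(0,\delta_\varepsilon)),
\]
uniformly in $(\xi,x)$. Consequently $Y^{\varepsilon,\delta}(s,\xi,x)\in[1-\eta_0,1+\eta_0]$ on $[T_2,C_1|\log\varepsilon|\wedge\tau_4]$ for all small $\varepsilon$, and in particular $f'(Y^{\varepsilon,\delta})\leq -p/2$ on this time interval.

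From (\ref{eq5-1}) applied on $[T_2,s]$, we then get the semigroup identity $Y_\xi^{\varepsilon,\delta}(s,\xi,x)=Y_\xi^{\varepsilon,\delta}(T_2,\xi,x)\exp\!\bigl(\int_{T_2}^s f'(Y^{\varepsilon,\delta}(u,\xi,x))\,du\bigr)\leq e^{C_fT_2}e^{-p(s-T_2)/2}$, where on $[0,T_2]$ the crude bound $Y_\xi^{\varepsilon,\delta}\leq e^{C_fT_2}$ is an $\varepsilon$-independent constant because $T_2$ is deterministic. Splitting the integral at $T_2$ and using that $f''$ is bounded on $[-2C_0-\delta',2C_0+\delta']$ (valid up to $\tau_4\leq\tau_3$), I obtain
\begin{align*}
|A^{\varepsilon,\delta}(\tau,\xi,x)|\leq \|f''\|_\infty e^{C_fT_2}\Bigl(T_2+\int_{T_2}^{\infty}e^{-p(s-T_2)/2}\,ds\Bigr)=:\tilde C_4(\eta,M),
\end{align*}
a uniform bound independent of $(\tau,\xi,x,\delta,\varepsilon)$ on the range in question.

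Finally, to upgrade this to the linear-in-$\tau$ bound requested, I note that for $\tau\leq 1$ the integrand satisfies $|Y_\xi^{\varepsilon,\delta}f''(Y^{\varepsilon,\delta})|\leq\|f''\|_\infty e^{C_f}$, giving $|A^{\varepsilon,\delta}(\tau,\xi,x)|\leq \|f''\|_\infty e^{C_f}\tau$, while for $\tau\geq 1$ the uniform bound $\tilde C_4$ is automatically majorized by $\tilde C_4\tau$. Taking $C_4(\eta,M):=\max\{\tilde C_4,\|f''\|_\infty e^{C_f}\}$ completes the claim. The only delicate point is transferring the deterministic behavior of $Y$ to $Y^{\varepsilon,\delta}$; this is absorbed by choosing $\delta_\varepsilon$ small enough to kill the $\varepsilon^{\gamma-C_fC_1}$ factor in Proposition \ref{thm51} (e.g.\ $\delta_\varepsilon=\exp(-\varepsilon^{-\beta})$ as in Lemma \ref{lem61}), and is therefore routine once Section \ref{sec5} is in place.
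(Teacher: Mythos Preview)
Your argument is correct and follows the same overall strategy as the paper: show that $Y_\xi^{\varepsilon,\delta}$ stays uniformly bounded (rather than growing like $e^{\mu\tau}$) when $\xi\in[1-\eta,1+M]$, and then integrate. The execution differs in two minor respects.

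First, the paper does not re-derive the boundedness of $Y_\xi^{\varepsilon,\delta}$ from scratch. It cites Lemma~3.6 of \cite{ham} for the deterministic bound $Y_\xi(\tau,\xi)\leq\tilde C_4$ and then transfers it to $Y_\xi^{\varepsilon,\delta}$ via the ratio identity
\[
Y_\xi^{\varepsilon,\delta}(\tau,\xi,x)=Y_\xi(\tau,\xi)\exp\!\int_0^\tau\!\bigl\{f'(Y^{\varepsilon,\delta})-f'(Y)\bigr\}\,ds
\leq \tilde C_4\,e^{C\varepsilon^\kappa|\log\varepsilon|}\leq C_4',
\]
using the same closeness $|Y^{\varepsilon,\delta}-Y|\leq C\varepsilon^\kappa$ that you invoke. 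Your time-splitting at $T_2$ and the decay $e^{-p(s-T_2)/2}$ effectively reprove the content of that cited lemma directly for $Y^{\varepsilon,\delta}$; this is more self-contained but also more work.

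Second, the final case split $\tau\leq 1$ versus $\tau\geq 1$ is unnecessary. Once you know $Y_\xi^{\varepsilon,\delta}\leq e^{C_fT_2}$ uniformly in $s$ (which your own estimates give, since the bound after $T_2$ is even smaller), the paper's one-line conclusion
\[
|A^{\varepsilon,\delta}(\tau,\xi,x)|\leq C_4'\int_0^\tau|f''(Y^{\varepsilon,\delta})|\,ds\leq C_4\,\tau
\]
applies directly, without passing through the intermediate uniform bound $\tilde C_4$.
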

\begin{proof}
Take $\varepsilon >0$ small enough and fix a time $\tau \in [0,C_1 |\log \varepsilon | \wedge \tau _4]$. From Lemma 3.6 of \cite{ham}, there exists $\widetilde{C}_4(\eta, M)>0$ such that $Y_\xi (\tau, \xi) \leq \widetilde{C}_4$ for all $\tau \geq 0$. Thus there exists positive constant $C'_4(\eta, M)>0$ such that
\begin{align}
\label{est6-7’}
Y_\xi ^{\varepsilon ,\delta}(\tau, \xi ,x) &= \exp \left \{ \int _0 ^\tau f'(Y_\xi ^{\varepsilon ,\delta}(s, \xi ,x) )ds \right \} \nonumber \\
&= Y_\xi (\tau, \xi) \exp \left [ \int _0 ^\tau \{ f'(Y_\xi ^{\varepsilon ,\delta}(s, \xi ,x) ) - f'(Y_\xi (s, \xi) ) \} ds \right ] \\
&\leq  \widetilde{C}_4 \exp (C\varepsilon ^{\kappa} \tau)\leq \widetilde{C}_4 \exp (C\varepsilon ^{\kappa}|\log \varepsilon |) \leq  C'_4.\nonumber
\end{align}
In the estimate of the third line, we use (\ref{est6-6}). From the behavior of $Y^{\varepsilon ,\delta}(\tau ,\xi )$ on $\Omega _\varepsilon$ and (\ref{est6-7’}), we now see that there exists $C_4(\eta, M)>0$ such that
\begin{align}
|A^{\varepsilon ,\delta}| \leq C_4' \int _0 ^\tau |f''(Y^{\varepsilon ,\delta})(s ,\xi )|ds \leq C_4 \tau .
\end{align}
This completes the proof.
\end{proof}

Next lemma is the combination of Lemmas \ref{lem61} and \ref{lem62}. We can prove in the same way as Lemma 3.7 of \cite{ham}. 

\begin{lem}
\label{lem63}
For all $\eta \in (0,1)$, there exist a sequence $\{\delta _\varepsilon \}$ which tends to 0 as $\varepsilon \to 0$ and a positive constant $C_5( \eta )>0$ such that
\begin{align}
\underset {\delta \in (0,\delta_\varepsilon )}{\sup} \underset {\xi \in (C\varepsilon ^\alpha ,2C_0]}{\sup} \underset {x \in D}{\sup}|A^{\varepsilon ,\delta} (\tau, \xi ,x)| \leq C_5 (e^{\mu \tau} -1) \ for\ all\  \tau \in [0,C_1|\log \varepsilon | \wedge \tau _4]
\end{align}
holds $P$-a.s.
\end{lem}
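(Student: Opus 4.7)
The plan is to patch together the estimates of Lemmas \ref{lem61} and \ref{lem62} over the full range $\xi \in (C\varepsilon^\alpha, 2C_0]$, using the elementary inequality $\tau \leq \mu^{-1}(e^{\mu\tau}-1)$ (valid for all $\tau \geq 0$) to convert polynomial bounds into the target exponential form. On $\xi \in [1-\eta, 2C_0]$, Lemma \ref{lem62} (with $M := 2C_0 - 1$) already yields $|A^{\varepsilon,\delta}| \leq C_4 \tau \leq (C_4/\mu)(e^{\mu\tau}-1)$; on $\xi \in (C\varepsilon^\alpha, 1-\eta)$ with $\tau \leq T_1$, Lemma \ref{lem61} gives the desired bound directly. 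The only regime requiring extra work is $\xi \in (C\varepsilon^\alpha, 1-\eta)$ with $\tau \in [T_1, C_1|\log\varepsilon|\wedge \tau_4]$.

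To handle it, I would split the defining integral of $A^{\varepsilon,\delta}$ at the time $T_1$:
\[
A^{\varepsilon,\delta}(\tau,\xi,x) = A^{\varepsilon,\delta}(T_1,\xi,x) + \int_{T_1}^\tau Y_\xi^{\varepsilon,\delta}(s,\xi,x)\, f''(Y^{\varepsilon,\delta}(s,\xi,x))\, ds.
\]
The first summand is controlled by Lemma \ref{lem61} applied at $\tau = T_1$, giving at most $C_3(e^{\mu T_1}-1) \leq C_3(e^{\mu\tau}-1)$. For the tail, observe that after $T_1$ the deterministic trajectory $Y(\cdot,\xi)$ lies in $[1-\eta, 1]$, a region where $f' < 0$ by our choice of $\eta$; Propositions \ref{thm43} and \ref{thm51} together ensure that $Y^{\varepsilon,\delta}(s,\xi,x)$ stays within $O(\varepsilon^\kappa)$ of $Y(s,\xi)$ on $[0, C_1|\log\varepsilon|\wedge \tau_4]$, so $f'(Y^{\varepsilon,\delta}) \leq 0$ there as well for small enough $\varepsilon$. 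By formula (\ref{eq5-1}), $s \mapsto Y_\xi^{\varepsilon,\delta}(s,\xi,x)$ is then non-increasing on $[T_1,\tau]$, hence bounded above by $Y_\xi^{\varepsilon,\delta}(T_1,\xi,x) \leq \widetilde{C}_2\, e^{\mu T_1}$, the last estimate being the one established inside the proof of Lemma \ref{lem61}.

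Substituting these bounds gives a tail controlled by $\|f''\|_\infty \widetilde{C}_2\, e^{\mu T_1}(\tau - T_1)$, and the elementary inequality $\mu(\tau - T_1) \leq e^{\mu(\tau-T_1)}-1$ turns this into at most $\|f''\|_\infty \widetilde{C}_2\, \mu^{-1}(e^{\mu\tau}-e^{\mu T_1}) \leq \|f''\|_\infty \widetilde{C}_2\, \mu^{-1}(e^{\mu\tau}-1)$, the desired form. Choosing $C_5 := \max\{C_3 + \widetilde{C}_2\|f''\|_\infty/\mu,\ C_4/\mu\}$ and letting $\delta_\varepsilon$ be the slower of the two sequences supplied by Lemmas \ref{lem61} and \ref{lem62} completes the argument. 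The main obstacle is verifying that the perturbed flow $Y^{\varepsilon,\delta}$ remains in the contraction region $\{f' < 0\}$ once the deterministic flow enters it, so that $Y_\xi^{\varepsilon,\delta}$ can indeed be dominated by its value at $T_1$; this is precisely why the conclusion is asserted only up to the stopping time $\tau_4$, on which Propositions \ref{thm43} and \ref{thm51} are in force.
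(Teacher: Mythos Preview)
Your argument is correct and follows the same route as the paper: combine Lemmas~\ref{lem61} and~\ref{lem62} via the elementary inequality $\tau \leq \mu^{-1}(e^{\mu\tau}-1)$ and choose $C_5 \geq C_3 \vee C_4/\mu$. The paper's own proof is a two-line sketch that records precisely these two ingredients and defers the remaining details to Lemma~3.7 of \cite{ham}; you have simply written out explicitly the one regime the sketch glosses over, namely $\xi \in (C\varepsilon^\alpha, 1-\eta)$ with $\tau > T_1$, by splitting the defining integral at $T_1$ and using the contraction $f'(Y^{\varepsilon,\delta}) \leq 0$ past that time to freeze $Y_\xi^{\varepsilon,\delta}$ at its value at $T_1$. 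One cosmetic point: the bound you invoke for $Y_\xi^{\varepsilon,\delta}(T_1,\xi,x)$ should cite the constant $C_2'$ from the proof of Lemma~\ref{lem61} (the one for the perturbed flow) rather than $\widetilde{C}_2$ (which bounds the deterministic $Y_\xi$), but this does not affect the argument.
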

\begin{proof}
We fix $M$ as in Lemma \ref{lem62} large enough such that $[-2C_0,2C_0] \subset [-1-M,1+M]$, then the positive constant $C_4$ is independent of $M$. Because $\tau \leq \frac{1}{\mu}(e^{\mu \tau} -1)$ we can take $C_5$ as $C_5 \geq C_3 \vee \frac{C_4}{\mu}$.
\end{proof}

We can prove the case of $\xi \in [-2C_0,-C\varepsilon ^\alpha)$ in a same way. In the case that $\xi \in [-C\varepsilon ^{\alpha } ,C\varepsilon ^{\alpha }]$, we need to wait until $Y^{\varepsilon ,\delta}$ go out of $[-C\varepsilon ^{\alpha '} ,C\varepsilon ^{\alpha '}]$ where $0< \alpha ' <\alpha$. However, until that time, $A^{\varepsilon, \delta}$ almost behave like $Ce^{\mu \tau}$. Indeed, we get
\begin{align}
|A^{\varepsilon, \delta}(\tau, \xi ,x)| &\leq \int _0 ^\tau Y_\xi ^{\varepsilon, \delta} (s,\xi ,x) f''(Y ^{\varepsilon, \delta} (s,\xi ,x) )ds \leq C' \int _0 ^\tau \exp ((\mu +C\varepsilon ^{\alpha '} )s) ds\nonumber \\
& \leq C' \exp (C_1C\varepsilon ^{\alpha '} |\log \varepsilon |) \int _0 ^\tau e^{\mu s} ds \leq \widetilde{C} ( e^{\mu \tau} -1),
\end{align}
for all $\tau \in [0,C_1|\log \varepsilon | \wedge \tau _\varepsilon]$ where the stopping time $\tau _\varepsilon$ is defined by
\begin{align}
\tau _\varepsilon := \inf \{\tau >0 | | Y ^{\varepsilon, \delta} |  \geq C\varepsilon ^{\alpha '} \ for\  some\ \xi \in [-C\varepsilon ^{\alpha } ,C\varepsilon ^{\alpha }] \ or\ x \in D   \}.
\end{align}
To sum up these results, we obtain the next theorem.
\begin{pro}
\label{thm62}
For all $\eta \in (0,1)$, there exist a sequence $\{\delta _\varepsilon \}$ which tends to 0 as $\varepsilon \to 0$ and a positive constant $C_5( \eta )>0$ such that
\begin{align}
\underset {\delta \in (0,\delta_\varepsilon )}{\sup} \underset {\xi \in [-2C_0 ,2C_0]}{\sup} \underset {x \in D}{\sup}|A^{\varepsilon ,\delta} (\tau, \xi ,x)| \leq C_5 (e^{\mu \tau} -1) \ for\ all\  \tau \in [0,C_1|\log \varepsilon | \wedge \tau _4]
\end{align}
holds $P$-a.s.
\end{pro}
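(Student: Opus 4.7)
The plan is to piece together the three cases that have already been dispatched, namely Lemma \ref{lem63} for $\xi\in(C\varepsilon^\alpha,2C_0]$, a mirror argument for $\xi\in[-2C_0,-C\varepsilon^\alpha)$, and the direct computation displayed in the paragraph preceding the proposition for $\xi\in[-C\varepsilon^\alpha,C\varepsilon^\alpha]$. Lemma \ref{lem63} immediately furnishes a constant $C_5^+(\eta)>0$ and a sequence $\delta_\varepsilon^+\to 0$ such that the desired bound holds for $\xi\in(C\varepsilon^\alpha,2C_0]$, uniformly in $x\in D$, $\tau\in[0,C_1|\log\varepsilon|\wedge\tau_4]$ and $\delta\in(0,\delta_\varepsilon^+)$.

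For the symmetric range $\xi\in[-2C_0,-C\varepsilon^\alpha)$, I would invoke the oddness assumption (v) in (\ref{reaction}): under the substitution $u\mapsto -u$ together with $W\mapsto -W$ the SDE (\ref{ode5-1}) is left invariant, and since the law of $W_\tau^{(\delta)}(x)$ is symmetric, one has the identity $Y^{\varepsilon,\delta}(\tau,-\xi,x)\stackrel{d}{=}-Y^{\varepsilon,\delta}(\tau,\xi,x)$, from which $|A^{\varepsilon,\delta}(\tau,-\xi,x)|$ and $|A^{\varepsilon,\delta}(\tau,\xi,x)|$ agree in distribution path-by-path. The proofs of Lemmas \ref{lem61}--\ref{lem63} therefore carry over verbatim and yield corresponding $C_5^-(\eta)$ and $\delta_\varepsilon^-\to 0$.

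The middle strip $\xi\in[-C\varepsilon^\alpha,C\varepsilon^\alpha]$ is handled by the estimate already displayed above the statement: up to the exit time $\tau_\varepsilon$ of $Y^{\varepsilon,\delta}$ from $[-C\varepsilon^{\alpha'},C\varepsilon^{\alpha'}]$, one has $|A^{\varepsilon,\delta}(\tau,\xi,x)|\leq\widetilde{C}(e^{\mu\tau}-1)$. After $\tau_\varepsilon$ the new initial value $\xi':=Y^{\varepsilon,\delta}(\tau_\varepsilon,\xi,x)$ satisfies $|\xi'|\geq C\varepsilon^{\alpha'}>C\varepsilon^\alpha$ for small $\varepsilon$, so the two previous cases apply to the restarted trajectory. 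Using the chain rule $Y^{\varepsilon,\delta}(\tau,\xi,x)=Y^{\varepsilon,\delta}(\tau-\tau_\varepsilon,\xi',x)$ (with appropriately shifted noise) together with its $\xi$-derivatives, one obtains
\begin{align*}
A^{\varepsilon,\delta}(\tau,\xi,x)=A^{\varepsilon,\delta}(\tau-\tau_\varepsilon,\xi',x)\,Y_\xi^{\varepsilon,\delta}(\tau_\varepsilon,\xi,x)+A^{\varepsilon,\delta}(\tau_\varepsilon,\xi,x),
\end{align*}
and since $Y_\xi^{\varepsilon,\delta}(\tau_\varepsilon,\cdot)\leq C e^{\mu\tau_\varepsilon}$ by the bound (\ref{eq5-1}) used in Lemma \ref{lem61}, the $(e^{\mu\tau}-1)$ shape of the estimate is preserved after enlarging the constant. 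Setting $\delta_\varepsilon:=\delta_\varepsilon^+\wedge\delta_\varepsilon^-$ and $C_5(\eta):=\max\{C_5^+(\eta),\,C_5^-(\eta),\,\widetilde{C},\,\mathrm{const}\}$ then completes the proof.

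The main obstacle I anticipate is bookkeeping rather than analysis: to state the bound on a single almost-sure event one must choose a common $\delta_\varepsilon$ for which all the auxiliary statements of Sections \ref{sec4} and \ref{sec5} (in particular Proposition \ref{thm43}, Lemma \ref{lem52}, Proposition \ref{thm51}, and the pathwise Sobolev embedding used in Lemma \ref{lem61}) hold simultaneously with probability tending to one. This forces $\delta_\varepsilon\to 0$ more slowly than any of the individual thresholds (as in the choice $\delta_\varepsilon=\exp(-\varepsilon^{-\beta})$ from the proof of Lemma \ref{lem61}), but once such a common sequence is fixed, the gluing of the three regimes is routine.
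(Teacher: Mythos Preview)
Your decomposition into the three $\xi$-ranges is exactly the paper's: Proposition~\ref{thm62} is stated as the summary (``To sum up these results'') of Lemma~\ref{lem63}, its mirror for $\xi<0$ (which the paper dismisses with ``in a same way''), and the short direct estimate for $|\xi|\leq C\varepsilon^\alpha$ displayed immediately before the statement. No further argument is given there, so your overall plan matches.

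One technical point in your restart step, which actually goes beyond what the paper writes out: the chain-rule identity you display for $A^{\varepsilon,\delta}$ after time $\tau_\varepsilon$ is valid only if $\tau_\varepsilon$ is independent of $\xi$. With the paper's definition, $\tau_\varepsilon$ is the first time \emph{some} trajectory leaves $[-C\varepsilon^{\alpha'},C\varepsilon^{\alpha'}]$, so at that instant a generic $\xi'=Y^{\varepsilon,\delta}(\tau_\varepsilon,\xi,x)$ need not satisfy $|\xi'|\geq C\varepsilon^{\alpha'}$, and Lemma~\ref{lem63} does not yet apply to it. If instead you take $\tau_\varepsilon$ to be the $(\xi,x)$-dependent exit time (as your wording ``the exit time $\tau_\varepsilon$ of $Y^{\varepsilon,\delta}$'' suggests), then $\partial_\xi\tau_\varepsilon$ contributes an extra term to the chain rule and your displayed formula is incomplete. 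The repair is routine---iterate the middle-strip bound on successive exit levels, or track the additional term---but the formula as written needs adjustment. The paper itself leaves this gluing implicit, so you are not missing anything the paper supplies.
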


\subsection{More detailed estimates}
Next we derive some estimates which we need when we apply the maximal principle.
\begin{lem}
\label{lem64}
There exist $C>0$ and $\alpha >0$ such that
\begin{align}
P\left ( \|Y^{\varepsilon, \delta} _{xx} \|_\infty \leq C\varepsilon ^\alpha \right ) \to 1
\end{align}
as $\varepsilon \to 0$, where $\| \cdot \|_\infty$ is a supreme norm on $\tau \in [0,C_1|\log \varepsilon |]$, $\xi \in [-2C_0,2C_0]$ and $x\in D$.
\end{lem}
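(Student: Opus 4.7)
The plan is to differentiate the defining ODE (\ref{ode5-1}) for $Y^{\varepsilon,\delta}$ twice with respect to the spatial variable and analyse the resulting \emph{linear} ODE. Writing $V_{ij} := \partial_{x_i x_j} Y^{\varepsilon,\delta}$, a direct computation yields
\begin{align*}
\dot V_{ij}(\tau,\xi,x) = f'(Y^{\varepsilon,\delta}) V_{ij} + f''(Y^{\varepsilon,\delta})\,\partial_{x_i}Y^{\varepsilon,\delta}\,\partial_{x_j}Y^{\varepsilon,\delta} + \varepsilon^{\gamma+\frac{1}{2}}\partial_{x_i x_j}\dot W^{(\delta)}_\tau(x),
\end{align*}
with $V_{ij}(0,\xi,x)=0$. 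Variation of parameters then expresses $V_{ij}(\tau)$ as an integral of the forcing against the propagator $\exp(\int_s^\tau f'(Y^{\varepsilon,\delta})du)$, whose modulus is bounded by $\varepsilon^{-C_f C_1}$ for $\tau\in[0,C_1|\log\varepsilon|]$.

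First I would establish the companion estimate $\|\partial_x Y^{\varepsilon,\delta}\|_\infty \leq C\varepsilon^{\alpha}$ with high probability, mirroring the pathway from Lemma \ref{lem42} to Proposition \ref{thm41}: the equation for $\partial_{x_i}Y^{\varepsilon,\delta}$ starts at $0$ and is forced only by $\varepsilon^{\gamma+1/2}\partial_{x_i}\dot W^{(\delta)}_\tau(x)$, so a Gronwall plus Sobolev argument identical in structure to that in Section \ref{sec4} (using the standing hypothesis that $\partial_x\partial_y Q_d(x,x)>0$ to control the variance of $\partial_{x_i}W^{(\delta)}$) gives a uniform-in-$(\tau,\xi,x)$ bound of order $\varepsilon^{\kappa}$, exactly as for $Z^\varepsilon_i$; the smoothing error is absorbed using Proposition \ref{thm51} and Lemma \ref{lem52}.

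Next I would handle the noise integral in the variation-of-parameters formula for $V_{ij}$. Because $W^{(\delta)}$ is smooth in $\tau$ (by construction in Section \ref{sec5}), the $\partial_{x_ix_j}\dot W^{(\delta)}$ term can be integrated by parts in $\tau$, converting it into the boundary value $\partial_{x_ix_j}W^{(\delta)}_\tau(x)$ plus $\int_0^\tau f'(Y^{\varepsilon,\delta})\,e^{\int_s^\tau f'}\,\partial_{x_ix_j}W^{(\delta)}_s(x)\,ds$. These are controlled pathwise by an $L^\infty$ supremum estimate on $\partial_{x_ix_j}W^{Q_d}_\tau(x)$ (and hence on its $\delta$-smoothing): this is a centered Gaussian process whose variance grows linearly in $\tau$ because of the hypothesis $\partial_{xx}\partial_{yy}Q_d(x,x)>0$, so a direct $L^{2p}$/Sobolev argument in the spirit of Lemma \ref{lem41}, together with Kolmogorov--Totoki as in Lemma \ref{lem51}, bounds its supremum over $[0,C_1|\log\varepsilon|]\times D$ by $C\sqrt{|\log\varepsilon|}$ with probability tending to one.

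Collecting terms, the forcing contribution is at most $\varepsilon^{-C_f C_1}\cdot C_1|\log\varepsilon|\cdot\varepsilon^{2\kappa}=O(\varepsilon^{2\kappa-C_f C_1}|\log\varepsilon|)$ on the good events, while the noise contribution is $O(\varepsilon^{\gamma+\frac{1}{2}-C_f C_1}|\log\varepsilon|^{3/2})$. Both are $\leq C\varepsilon^\alpha$ for some $\alpha>0$ provided $2\kappa>C_f C_1$ and $\gamma>C_f C_1-\tfrac{1}{2}$, which is compatible with the standing assumptions on $(\gamma,\kappa,C_1)$; uniformity in $\xi\in[-2C_0,2C_0]$ is obtained exactly as in Proposition \ref{thm41}, via an $\varepsilon$-net in $\xi$ combined with a Lipschitz estimate of Corollary \ref{cor31} type. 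The main obstacle will be bookkeeping: coordinating the various high-probability exceptional sets (those coming from $\tau_2$, $\tau_3$, $\tau_4$, $\tau_\varepsilon^p$, and the two suprema of $\partial_x W^{(\delta)}$ and $\partial_{xx}W^{(\delta)}$) so that their union is still negligible as $\varepsilon\to 0$, and verifying that the $\delta$-smoothing errors, controlled by choosing $\delta_\varepsilon$ as in Section \ref{sec5}, do not spoil any of these estimates.
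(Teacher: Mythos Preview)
Your approach is correct in outline but is considerably more elaborate than what the paper does. The paper's proof is essentially a two-line pathwise Gronwall argument: one introduces a new stopping time
\[
\tau_7 := \inf\{\tau>0 : \|W_\tau(\cdot)\|_{H^{2+k}(D)}>\varepsilon^{-\beta}\}
\]
for some $k>\tfrac d2$ and $\beta\in(0,\gamma+\tfrac12-C_fC_1)$, notes that $P(\tau_7>C_1|\log\varepsilon|)\to1$ (since $\|W_\tau\|_{H^{2+k}}$ is Gaussian with variance $O(\tau)$), and then on $\{\tau\le\tau_4\wedge\tau_7\}$ bounds $\partial_xW^{(\delta)}_\tau$ and $\partial_{xx}W^{(\delta)}_\tau$ in $L^\infty$ directly by Sobolev embedding. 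Integrating the ODE for $Y^{\varepsilon,\delta}_x$ and applying Gronwall gives $\|Y^{\varepsilon,\delta}_x\|_\infty\le C\varepsilon^{\gamma+\frac12-\beta-C_fC_1}$; feeding this into the ODE for $Y^{\varepsilon,\delta}_{xx}$ and repeating Gronwall finishes the proof.

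The differences are: (i) you control $\sup_\tau|\partial_{xx}W^{(\delta)}_\tau|$ via integration by parts in $\tau$ plus a Kolmogorov--Totoki argument, whereas the paper simply packages this into the single event $\{\tau_7>C_1|\log\varepsilon|\}$ with a crude polynomial threshold $\varepsilon^{-\beta}$; (ii) you route the first-derivative bound through the $L^{2p}$/Sobolev machinery of Section~\ref{sec4}, whereas the paper does a direct pathwise Gronwall once the noise is controlled; (iii) your $\varepsilon$-net in $\xi$ is unnecessary here: once the noise increment is bounded \emph{pathwise} (which you also achieve), the Gronwall constant $C_f$ is already uniform over $\xi\in[-2C_0,2C_0]$ via $\tau_4$, so no net is needed. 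Your route would work and gives slightly sharper exponents, but the paper's device of $\tau_7$ collapses all the ``bookkeeping of exceptional sets'' you anticipate into a single stopping time and avoids the integration-by-parts step entirely.
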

\begin{proof}
At first we set a stopping time 
\begin{align}
\tau _7 := \inf \{\tau >0 | \|W_\tau (x) \|_{H^{2+k} (D)} > \varepsilon ^{-\beta}   \}.
\end{align}
for $k >\frac{d}{2}$ and $\beta \in (0, \gamma +\frac{1}{2} -C_f C_1)$. It is easy to check that $P(\tau _7\geq C_1 |\log \varepsilon |)\to 1$ as $\varepsilon \to 0$. Next we derive the estimate for $Y^{\varepsilon, \delta} _{x}$. $Y^{\varepsilon, \delta} _{x}$ satisfies the ODE:
\begin{align}
\begin{cases}
\dot{Y}^{\varepsilon, \delta} _{x} = Y^{\varepsilon, \delta} _{x}f'(Y^{\varepsilon, \delta}) + \varepsilon ^{\gamma +\frac{1}{2}}\partial _x \dot{W}_t ^{(\delta)}(x) ,\\
Y^{\varepsilon, \delta} _{x} (0,\xi,x) = 0 .
\end{cases}
\end{align}
Straightforward computations give us
\begin{align}
|Y^{\varepsilon, \delta} _{x} (\tau ,\xi ,x)| &\leq C_f \int _0 ^\tau |Y^{\varepsilon, \delta} _{x}(s ,\xi ,x)| ds + \varepsilon ^{\gamma +\frac{1}{2} - \beta}
\end{align}
for all $\tau \in [0,C_1 |\log \varepsilon | \wedge \tau _4 \wedge \tau _7]$, $\xi \in [-2C_0 ,2C_0]$ and $x\in D$, because we have that $Y^{\varepsilon, \delta}$ do not go out of some interval until the time $\tau _4$ from Proposition \ref{thm51} and Sobolev's embedding theorem. From Gronwall's inequality and the construction of the process $W_t ^\delta$, we obtain
\begin{align}
\underset{\xi\in[-2C_0,2C_0]}{\sup}\underset{x\in D}{\sup}|Y^{\varepsilon, \delta} _{x} (\tau ,\xi ,x)|\leq C \varepsilon ^{\gamma +\frac{1}{2}-\beta } \exp (C_f \tau)
\end{align}
and this implies that 
\begin{align}
\label{est6-2}
\| Y^{\varepsilon, \delta} _{x} \|_{\infty} \leq C \varepsilon ^{\gamma +\frac{1}{2}-\beta -C_f C_1}.
\end{align}
Now we give the estimate for $Y^{\varepsilon, \delta} _{xx}$ similarly. $Y^{\varepsilon, \delta} _{xx}$ satisfies the ODE:
\begin{align}
\begin{cases}
\dot{Y}^{\varepsilon, \delta} _{xx} = Y^{\varepsilon, \delta} _{xx}f'(Y^{\varepsilon, \delta}) + (Y^{\varepsilon, \delta} _{x})^2 f''(Y^{\varepsilon, \delta}) + \varepsilon ^{\gamma +\frac{1}{2}}\partial _{xx} \dot{W}_t ^{(\delta)} (x) ,\\
Y^{\varepsilon, \delta} _{xx} (0,\xi,x) = 0 .
\end{cases}
\end{align}
The same argument as above, the behavior of $Y^{\varepsilon, \delta}$ and (\ref{est6-2}) conclude this result.
\end{proof}

\begin{lem}
\label{lem65}
There exist $C>0$ and $\kappa >0$ such that for all $C>0$
\begin{align}
P\left ( \left \|\frac{Y^{\varepsilon, \delta} _{\xi x}}{Y^{\varepsilon, \delta} _\xi} \right \|_\infty \leq C\varepsilon ^\kappa \right ) \to 1
\end{align}
as $\varepsilon \to 0$, where $\| \cdot \|_\infty$ is a supreme norm as in Lemma \ref{lem64}.
\end{lem}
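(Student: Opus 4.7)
The plan is to derive an explicit integral representation for $Y^{\varepsilon,\delta}_{\xi x}/Y^{\varepsilon,\delta}_\xi$ by differentiating the formula (\ref{eq5-1}) in the variable $x$. This will reduce the estimate to one on $Y^{\varepsilon,\delta}_x$, which was already carried out in the proof of Lemma \ref{lem64}. Concretely, starting from
\[
Y^{\varepsilon,\delta}_\xi(\tau,\xi,x) = \exp\left(\int_0^\tau f'(Y^{\varepsilon,\delta}(s,\xi,x))\,ds\right),
\]
differentiation with respect to $x$ (justified because $W^{(\delta)}_t$ is smooth in $x$ by construction in Section \ref{sec5}, so $Y^{\varepsilon,\delta}$ depends smoothly on all its arguments by standard ODE theory) yields
\[
Y^{\varepsilon,\delta}_{\xi x}(\tau,\xi,x) = Y^{\varepsilon,\delta}_\xi(\tau,\xi,x)\int_0^\tau f''(Y^{\varepsilon,\delta}(s,\xi,x))\,Y^{\varepsilon,\delta}_x(s,\xi,x)\,ds.
\]
Since $Y^{\varepsilon,\delta}_\xi>0$, this gives the key identity
\[
\frac{Y^{\varepsilon,\delta}_{\xi x}}{Y^{\varepsilon,\delta}_\xi}(\tau,\xi,x) = \int_0^\tau f''(Y^{\varepsilon,\delta}(s,\xi,x))\,Y^{\varepsilon,\delta}_x(s,\xi,x)\,ds,
\]
in which the potentially singular denominator has been eliminated.

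Next, I would work on the event $\{C_1|\log\varepsilon|\leq \tau_4\wedge\tau_7\}$, which has probability tending to $1$ as $\varepsilon\to 0$ by the estimates of Section \ref{sec4} together with the bound on $\tau_7$ established in the proof of Lemma \ref{lem64}. On this event, Proposition \ref{thm51} and Lemma \ref{lem52} ensure that $Y^{\varepsilon,\delta}$ stays in a fixed bounded interval, so $|f''(Y^{\varepsilon,\delta})|$ is bounded above by a deterministic constant. Using the estimate (\ref{est6-2}) from the proof of Lemma \ref{lem64},
\[
\|Y^{\varepsilon,\delta}_x\|_\infty \leq C\varepsilon^{\gamma+\frac{1}{2}-\beta-C_f C_1}
\]
for any $\beta\in(0,\gamma+\frac{1}{2}-C_f C_1)$, and integrating over $\tau\in[0,C_1|\log\varepsilon|]$ one obtains
\[
\left|\frac{Y^{\varepsilon,\delta}_{\xi x}}{Y^{\varepsilon,\delta}_\xi}(\tau,\xi,x)\right| \leq C\,|\log\varepsilon|\,\varepsilon^{\gamma+\frac{1}{2}-\beta-C_f C_1}.
\]

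Finally, by choosing $\beta$ strictly smaller than $\gamma+\frac{1}{2}-C_f C_1$, the exponent of $\varepsilon$ is strictly positive, and the logarithmic factor is absorbed into an arbitrarily small decrease of that exponent. Thus the right-hand side is bounded by $C\varepsilon^\kappa$ for some $\kappa>0$, uniformly in $\tau,\xi,x$ and in $\delta\in(0,\delta_\varepsilon]$ for a suitable null sequence $\{\delta_\varepsilon\}$. The main obstacle is essentially only bookkeeping; once the identity for the ratio is written down, the problem reduces to the already-established sup-norm bound on $Y^{\varepsilon,\delta}_x$. The one point requiring some care is the legitimacy of differentiating under the integral in $x$ and the handling of $\delta\to 0$ uniformly, both of which rely on the spatial smoothness of the mollified noise $W^{(\delta)}_t$ constructed in Section \ref{sec5}.
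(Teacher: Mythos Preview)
Your proposal is correct and follows essentially the same approach as the paper: both derive the identity $\frac{Y^{\varepsilon,\delta}_{\xi x}}{Y^{\varepsilon,\delta}_\xi}=\partial_x\log Y^{\varepsilon,\delta}_\xi=\int_0^\tau f''(Y^{\varepsilon,\delta})\,Y^{\varepsilon,\delta}_x\,ds$ from (\ref{eq5-1}), then apply the bound (\ref{est6-2}) on $Y^{\varepsilon,\delta}_x$ together with the boundedness of $f''$ on the controlled range of $Y^{\varepsilon,\delta}$ to conclude. Your write-up is in fact slightly more careful than the paper's in tracking the $\beta$ from the definition of $\tau_7$ and in explicitly absorbing the $|\log\varepsilon|$ factor into a smaller positive exponent.
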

\begin{proof}
We note that
\begin{align}
\frac{Y^{\varepsilon, \delta} _{\xi x}}{Y^{\varepsilon, \delta} _\xi} = \frac{\partial}{\partial x} \log Y^{\varepsilon, \delta}_\xi =\int _0 ^\tau Y^{\varepsilon, \delta} _x(s,\xi ,x) f''(Y^{\varepsilon, \delta} (s,\xi ,x))ds
\end{align}
and the estimate (\ref{est6-2}) and the behavior of $Y^{\varepsilon, \delta}$ give us $\frac{Y^{\varepsilon, \delta} _{\xi x}}{Y^{\varepsilon, \delta} _\xi} \leq C\varepsilon ^{\gamma +\frac{1}{2}-C_f C}|\log \varepsilon|$ for all $\tau \in [0,C_1 |\log \varepsilon | \wedge \tau _4 \wedge \tau _7]$, $\xi \in [-2C_0 ,2C_0]$ and $x\in D$.
\end{proof}

\subsection{Proof of the comparison theorem}
Now we prove Proposition \ref{thm61} by using the maximal principle.

\begin{proof}[Proof of Proposition \ref{thm61}]
First of all, we check the initial conditions $\xi$ in $w_\varepsilon ^\pm$ are in $[-2C_0,2C_0]$. When $t \in [0,C_1 \varepsilon | \log \varepsilon |]$ and $\varepsilon$ is sufficiently small, we have that
\begin{align}
u_0 ^+ + \varepsilon C_2 ( e^{\frac{\mu t}{\varepsilon}} -1 ) \leq C_0 + C_2 (\varepsilon ^{1-C_1 \mu} - \varepsilon) \leq 2C_0
\end{align}
because $C_1< \frac{1}{\mu}$. We can show $u_0 ^- - \varepsilon C_2 ( e^{\frac{\mu t}{\varepsilon}} -1 )\geq -2C_0$ in a similar way. Let $\mathcal{L}$ be an operator which is defined by
\begin{align}
\mathcal{L} (u)(t,x) := \dot{u}(t,x) - \Delta u(t,x) - \frac{1}{\varepsilon}f(u(t,x))- \varepsilon ^{\gamma} \dot{W}_t ^{(\delta )}(x).
\end{align}
From the maximal principle, if $\mathcal{L} (w_{\varepsilon ,\delta} ^+)\geq 0$ then $w_{\varepsilon ,\delta} ^+\geq u^{\varepsilon ,\delta}$. Now we fix $t\in[0,C_1 \varepsilon |\log \varepsilon | \wedge \varepsilon \tau _4 \wedge \tau _6 \wedge \varepsilon \tau _7]$ derive an estimate for $\mathcal{L}( w_{\varepsilon ,\delta} ^+)(t,x)$. Until the time $\varepsilon \tau _4 \wedge \tau _6 \wedge \varepsilon \tau _7$, we obtain
\begin{align}
\label{est6-1}
\mathcal{L} (w_{\varepsilon ,\delta} ^+)(t,x) &= \frac{1}{\varepsilon} Y_\tau ^{\varepsilon , \delta } + \mu C_2  e^{\frac{\mu t}{\varepsilon}} Y_\xi ^{\varepsilon , \delta } - \Delta u_0 ^+ Y_\xi ^{\varepsilon , \delta } - \{ \nabla u _0 ^+ \} ^2 Y_{\xi \xi } ^{\varepsilon , \delta  }- 2 \nabla u _0 ^+ Y_{\xi x} ^{\varepsilon , \delta  } - Y_{xx} ^{\varepsilon , \delta  }\nonumber \\
&\ \ \ - \frac{1}{\varepsilon}f(Y ^{\varepsilon , \delta  }) - \varepsilon ^{\gamma} \dot{W}_t ^{(\delta )}(x)\nonumber \\
&= \mu C_2  e^{\frac{\mu t}{\varepsilon}} Y_\xi ^{\varepsilon , \delta } - \Delta u_0 ^{+} Y_\xi ^{\varepsilon , \delta }- \{ \nabla u _0 ^+ \} ^2 Y_{\xi \xi} ^{\varepsilon , \delta  } - 2 \nabla u _0 ^+ Y_{\xi x} ^{\varepsilon , \delta  } - Y_{xx} ^{\varepsilon , \delta  }\nonumber \\
&= \left[ \mu C_2  e^{\frac{\mu t}{\varepsilon}} - \Delta u_0 ^+ - \{ \nabla u _0 ^+ \} ^2 A ^{\varepsilon , \delta  } - 2 \nabla u _0 ^+ \frac{Y_{\xi x} ^{\varepsilon , \delta  }}{Y_{\xi} ^{\varepsilon , \delta  }}\right ] Y_{\xi} ^{\varepsilon , \delta  } - Y_{xx} ^{\varepsilon , \delta  }\nonumber \\
&\geq \left[ \left \{ \mu C_2 - C_5 \{ \nabla u _0 ^+ \} ^2 \right \}  e^{\frac{\mu t}{\varepsilon}} - \Delta u_0 ^+ - 2 \nabla u _0 ^+ \frac{Y_{\xi x} ^{\varepsilon , \delta  }}{Y_{\xi} ^{\varepsilon , \delta  }}\right ] Y_{\xi} ^{\varepsilon , \delta  } - Y_{xx} ^{\varepsilon , \delta  } \nonumber \\
&\geq \left[ \mu C_2 - C_5 \{ \nabla u _0 ^+ \} ^2 - \Delta u_0 ^+ - 2 \nabla u _0 ^+ \frac{Y_{\xi x} ^{\varepsilon , \delta  }}{Y_{\xi} ^{\varepsilon , \delta  }}\right ] Y_{\xi} ^{\varepsilon , \delta  } - Y_{xx} ^{\varepsilon , \delta  } \nonumber \\
&\geq \left ( \mu C_2 - C_5 C_0 ^2 - C_0 - 2 C_0 \varepsilon ^\kappa \right ) Y_{\xi} ^{\varepsilon , \delta  } - \varepsilon ^\kappa
\end{align}
for all $x \in D$ and $\delta \in (0,\delta _\varepsilon )$. Note that the stochastic process $Y_{\xi} ^{\varepsilon , \delta  }$ is positive. The definition of $A^{\varepsilon,\delta}$ gives us the third equality. The forth inequality comes from Proposition \ref{thm62}. We can take $C_2$ large so that the right hand side of (\ref{est6-1}) is larger than 0. 
Thus we have proved that
\begin{align}
w_{\varepsilon ,\delta} ^+ \geq u^{\varepsilon ,\delta}\ for\ all\  t \in \left[0, C_1 \varepsilon |\log \varepsilon | \wedge \varepsilon \tau _4 \wedge \tau _6 \wedge  \varepsilon \tau _7 \right],\ x\in D \ and\  \delta \in (0,\delta _\varepsilon ]
\end{align}
holds $P$-a.s. Now we fix $\varepsilon >0$. For each $\omega \in \Omega$ a.s.-$P$, we can show that the functions $w_{\varepsilon ,\delta} ^+(t,x)$ and $u^{\varepsilon ,\delta}(t,x)$ converge uniformly to $w_{\varepsilon} ^+(t,x)$ and $u^{\varepsilon}(t,x)$ as $\delta \to 0$ respectively from the estimates in Proposition \ref{thm51} and Proposition \ref{thm52}. These convergence preserve the estimate between $w_{\varepsilon ,\delta} ^+(t,x)$ and $u^{\varepsilon ,\delta}(t,x)$, and do not contradict the limit as $\varepsilon \to 0$. And thus we conclude that
\begin{align}
w_{\varepsilon} ^+(t,x) \geq u^{\varepsilon}(t,x)\ for\ every\  t \in \left[0, C_1 \varepsilon |\log \varepsilon | \wedge \varepsilon \tau _4 \wedge \tau _6 \wedge \varepsilon \tau _7 \right] ,\ x\in D
\end{align}
holds $P$-a.s. The converse $w_{\varepsilon} ^- \leq u^{\varepsilon}$ can be proved in a similar way.
\end{proof}

\section{Proof of Theorem \ref{thm1}}
\label{sec8}

In this section, we prove the generation of interface in multi-dimension.
\begin{proof}[Proof of Theorem \ref{thm1}]
At first, we take $\tilde{\gamma} _d$ large enough so that lemmas the propositions from Section \ref{sec3} to \ref{sec6} hold for all $\gamma \geq \tilde{\gamma} _d$. Proposition \ref{thm61} implies that
\begin{align}
\label{est7-3-2}
u^\varepsilon (C_1\varepsilon |\log \varepsilon| \wedge \varepsilon \tau _4 \wedge \tau _6 \wedge \varepsilon \tau _7,x) \leq w_\varepsilon ^+ (C_1\varepsilon |\log \varepsilon| \wedge \varepsilon \tau _4 \wedge \tau _6 \wedge \varepsilon \tau _7,x),
\end{align}
for all $C_1$ which satisfies $0 <C_1 <\frac{1}{\mu}$, holds $P$-a.s. From the definition of $w_\varepsilon ^+$ and $\tau _4$ we can estimate as
\begin{align}
\label{est7-2-2}
w_\varepsilon ^+ (C_1\varepsilon |\log \varepsilon| & \wedge \varepsilon \tau _4 \wedge \tau _6 \wedge \varepsilon \tau _7,x) \nonumber \\
&\leq Y^\varepsilon \left( C_1 |\log \varepsilon|\wedge \varepsilon \tau _4 \wedge \tau _6 \wedge \varepsilon \tau _7 , u_0 ^+ (x) + C_2 ( \varepsilon ^{1-C_1 \mu}-\varepsilon ) ,x \right)\nonumber \\
&\leq Y \left( C_1 |\log \varepsilon|\wedge \varepsilon \tau _4 \wedge \tau _6 \wedge \varepsilon \tau _7 , u_0 ^+ (x) + C_2 ( \varepsilon ^{1-C_1 \mu}-\varepsilon ) \right) +\varepsilon ^\kappa .
\end{align}
From Proposition \ref{thm32}, Lemma \ref{thm43}, (\ref{est7-3-2}) and (\ref{est7-2-2}), we get
\begin{align}
P ( C_1\varepsilon & |\log \varepsilon| \leq \varepsilon \tau _4 \wedge \tau _6 \wedge \varepsilon \tau _7 ) \nonumber \\
& \leq P\left ( u^\varepsilon (C_1\varepsilon |\log \varepsilon|,x) \leq 1 + C\varepsilon ^\kappa \right )
\end{align}
and the left hand side converges to 1 as $\varepsilon \to 0$. This is the upper bound of (i). Remind that the estimate
\begin{align}
|u_0 (x) + C_2 ( \varepsilon ^{1-C_1 \mu}-\varepsilon )| \leq 2C_0
\end{align}
holds for all $x \in D$. The proof of the lower bound is similar. Next we check the conditions (ii) and (iii). We only show (ii). From Proposition \ref{thm61} and the definition of $\tau _4$, we obtain
\begin{align}
u^\varepsilon (C_1\varepsilon |\log \varepsilon| & \wedge \varepsilon \tau _4 \wedge \tau _6 \wedge \varepsilon \tau _7,x) \geq w_\varepsilon ^- (C_1\varepsilon |\log \varepsilon| \wedge \varepsilon \tau _4 \wedge \tau _6 \wedge \varepsilon \tau _7 ,x) \nonumber \\
& \geq Y^\varepsilon \left( C_1 |\log \varepsilon|\wedge \varepsilon \tau _4 \wedge \tau _6 \wedge \varepsilon \tau _7 , u_0 ^- (x) - C_2 ( \varepsilon ^{1-C_1 \mu}-\varepsilon ) ,x \right) \nonumber \\
& \geq Y \left( C_1 |\log \varepsilon|\wedge \varepsilon \tau _4 \wedge \tau _6 \wedge \varepsilon \tau _7 , u_0 ^- (x) - C_2 ( \varepsilon ^{1-C_1 \mu}-\varepsilon ) \right) - \varepsilon ^\kappa
\end{align}
Here we need to observe the neighborhood of $\{ x \in D | u_0 (x) =0 \}$. The condition $\xi >\varepsilon^\alpha$ is equivalent to
\begin{align}
\label{est7-1-2}
u_0 ^- (x) \geq C_2 ( \varepsilon ^{1-C_1 \mu}-\varepsilon ) + \varepsilon^\alpha .
\end{align}
We can take $u_0 - u_0 ^-$ sufficiently small. And thus there exist a positive constant $C>0$ and $u_0 (x) \geq C_2 \varepsilon ^{1-C_1 \mu}$ implies (\ref{est7-1-2}) if $1-C_1 \mu \leq 1 \wedge \alpha$. We take the constant $\beta := 1-C_1 \mu$. The proof of (iii) is similar to that of (ii). The convergence $P ( C_1\varepsilon |\log \varepsilon| \leq \varepsilon \tau _4 \wedge \tau _6 \wedge \varepsilon \tau _7 ) \to 1$, as $\varepsilon \to 0$, complete the proof.
\end{proof}

\section{Proofs of Theorems \ref{thm21} and \ref{thm22}}
\label{sec7}
In this section, we prove the generation and the motion of interface in one-dimension. Before proving, we need some preparations and show the analogous lemmas and propositions to that of Section \ref{sec4}, \ref{sec5} and \ref{sec6}. Let $u^\varepsilon$ be the solution of (\ref{eq:spde2}) and $u^{\varepsilon , \delta}$ be its approximation defined in Section \ref{sec5}. We define super and sub solutions as following;
\begin{align}
\label{supsub}
w_{\varepsilon } ^\pm (t,x) = Y^{\varepsilon } \left( \frac{t}{\varepsilon } , u_0 (x) \pm \varepsilon h(x) \left(e^\frac{\mu t}{\varepsilon}-1 \right) ,x \right) ,
\end{align}
where $Y^{\varepsilon}$ is the solution of SDE (\ref{sde}) with $D$ replaced by $[-1,1]$ and $h \in H^3 (\mathbb{R})$. We take the positive function $h\in H^3(\mathbb{R})$ and prove that they are super and sub solutions of (\ref{eq:spde}) in the next proposition. We can extend the comparison argument in previous section to that of one-dimension.
\begin{lem}
\label{thm43-2}
If $2\gamma +1 -3 \kappa -(\alpha +1)\frac{2C_f }{\mu} >0$ holds, then there exists $C>0$ such that
\begin{align}
P\left ( \|Y^\varepsilon - Y \|_\infty \leq C\varepsilon ^\kappa \right ) \to 1
\end{align}
as $\varepsilon \to 0$, where $\| \cdot \|_\infty$ is a supreme norm on $\tau \in [0,\frac{1}{\mu}|\log \varepsilon |]$, $\xi \in [-2C_0,2C_0]\backslash (-\varepsilon ^{\alpha}, \varepsilon ^{\alpha})$ and $x\in [-1,1]$.
\end{lem}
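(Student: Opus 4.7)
The plan is to mirror the proof of Proposition \ref{thm43}, now specialized to the one-dimensional situation $x\in[-1,1]$ and with the integrability exponent chosen so that the Sobolev embedding $W^{1,2p}\hookrightarrow L^\infty$ is available on an interval. In dimension $d=1$ the condition $p>d/4=1/4$ allows us to take $p=1$, which is exactly what is needed: plugging $p=1$ into the hypothesis $2\gamma+1+\kappa(2p-5)-(\alpha+p)\frac{2C_f}{\mu}>0$ from Proposition \ref{thm43} reduces to the hypothesis $2\gamma+1-3\kappa-(\alpha+1)\frac{2C_f}{\mu}>0$ stated in the lemma. The noise $W_\tau^Q(x)$ on $\mathbb R$ is smooth in $x$ with compactly supported covariance $Q(x,y)$ whose support is in $[-1,1]\times[-1,1]$, so effectively we are working on the finite interval $[-1,1]$ exactly as in the multi-dimensional argument.

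First I would reproduce the It\^o-rule computation of Lemma \ref{lem41} for the process $Y^\varepsilon(\tau,\xi,x)-Y(\tau,\xi)$ with $p=1$: an application of It\^o's formula, the one-sided bound $f'\le C_f$, Fubini, and Gronwall's inequality on the interval $[-1,1]$ yields
\begin{align*}
E\bigl[\|Y^\varepsilon(T_\varepsilon\wedge\tau_\varepsilon^1,\xi,\cdot)-Y(T_\varepsilon\wedge\tau_\varepsilon^1,\xi)\|_{L^2(-1,1)}^2\bigr]
\le C\,\varepsilon^{2\gamma+1-\frac{2C_f}{\mu}}\|Q\|_{L^1}\,|\log\varepsilon|.
\end{align*}
Then the same argument applied to $Z^\varepsilon=\partial_x Y^\varepsilon$, which solves the linear SDE driven by $\partial_x W_\tau(x)$, produces the analogue of Lemma \ref{lem42} with the covariance $Q_1(x,x)=\partial_x\partial_y Q(x,y)|_{y=x}$. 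Summing gives a $W^{1,2}(-1,1)$ bound of the same order.

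Next I would convert this moment bound into the uniform-in-$x$ statement by Chebyshev's inequality combined with the one-dimensional Sobolev embedding $W^{1,2}(-1,1)\hookrightarrow L^\infty(-1,1)$, yielding
\begin{align*}
P\Bigl(\sup_{\tau\in[0,T_\varepsilon],\,x\in[-1,1]}|Y^\varepsilon(\tau,\xi,x)-Y(\tau,\xi)|>C\varepsilon^\kappa\Bigr)
\le C'\varepsilon^{2\gamma+1-2\kappa-\frac{2C_f}{\mu}}|\log\varepsilon|
\end{align*}
for each fixed $\xi\in[-2C_0,2C_0]\setminus(-\varepsilon^\alpha,\varepsilon^\alpha)$.

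Finally I would upgrade to a uniform estimate in $\xi$ by the discretization trick of Proposition \ref{thm41}: cover $[-2C_0,2C_0]\setminus(-\varepsilon^\alpha,\varepsilon^\alpha)$ by $O(\varepsilon^{-\kappa-\theta})$ points with spacing $\varepsilon^{\kappa+\theta}$ where $\theta=2C_f\tilde\alpha/\mu$, use monotonicity of $Y^\varepsilon$ and $Y$ in the initial condition to sandwich any intermediate $\xi'$ between two grid points, and control the increment of $Y$ over the grid spacing via Corollary \ref{cor31}. The union bound then gives a total failure probability $O(\varepsilon^{2\gamma+1-3\kappa-(\alpha+1)\frac{2C_f}{\mu}-\theta}|\log\varepsilon|)$ (absorbing $\theta$ into $\alpha$ by taking $\tilde\alpha$ slightly larger than $\alpha$), which vanishes as $\varepsilon\to 0$ under the stated hypothesis. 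The main obstacle—really the only place one must be careful—is bookkeeping the exponents so that the final condition $2\gamma+1-3\kappa-(\alpha+1)\frac{2C_f}{\mu}>0$ emerges cleanly after the grid cost and the Sobolev loss are both paid; the stochastic analysis itself is a direct transcription of Section \ref{sec4} with $p=1$ and the scalar covariances $Q(x,x),Q_1(x,x)$ in place of $Q_d,Q_i$.
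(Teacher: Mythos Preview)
Your proposal is correct and follows precisely the approach the paper intends: the paper does not give a separate proof of Lemma~\ref{thm43-2} but treats it as the one-dimensional specialization of Proposition~\ref{thm43}, and your observation that taking $p=1$ (admissible since $p>d/4=1/4$) reduces the hypothesis of Proposition~\ref{thm43} to the stated condition $2\gamma+1-3\kappa-(\alpha+1)\frac{2C_f}{\mu}>0$ is exactly the mechanism behind the formulation of this lemma. Your exponent bookkeeping through Lemmas~\ref{lem41}--\ref{lem42}, the Chebyshev/Sobolev step, and the $\xi$-grid union bound with spacing $\varepsilon^{\kappa+\theta}$ matches the paper's computations in Section~\ref{sec4} line by line.
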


We approximate $Y^\varepsilon$ as $Y^{\varepsilon ,\delta}$ as in (\ref{ode5-1}). Propositions \ref{thm51}, \ref{thm52}, \ref{thm62} and Lemmas \ref{lem64}, \ref{lem65} with $D$ replaced by $\mathbb{R}$ or $[-1,1]$ can be shown in a similar way to that of these results. We define stopping times $\bar{\tau} _i$ ($i=1,\cdots,7$)
\begin{align}
&\bar{\tau} _2 = \inf \{ \tau >0 | |Y^\varepsilon - Y | > \varepsilon ^\kappa \ for \ some \ \xi \in [-2C_0,2C_0]\backslash (-\varepsilon ^{\alpha}, \varepsilon ^{\alpha}),\  x\in \mathbb{R} \} ,\\
&\bar{\tau} _3 = \inf \{ \tau >0 | \| Y ^{\varepsilon ,\delta} (\tau ,\cdot  ,\cdot )\|_{L^\infty ([-2C_0,2C_0] \times \mathbb{R})} >2C_0+ \delta ' \ for\ some\ \delta \in (0,\delta _\varepsilon ] \},\\
&\bar{\tau} _4 = \bar{\tau}_2 \wedge \bar{\tau} _3, \\
&\bar{\tau} _5 =\inf  \{t>0 | |u^{\varepsilon , \delta} (t,x)| > 2C_0\ for \ some \ x\in \mathbb{R} \ or \ \delta \in (0,\delta_\varepsilon] \},\\
&\bar{\tau} _6 = \bar{\tau}_1 \wedge \bar{\tau} _5,\\
&\bar{\tau} _7 = \inf \{\tau >0 | \|W_\tau (x) \|_{H^3 (\mathbb{R})} > \varepsilon ^{-\beta}   \},
\end{align}
by replacing $D$ to $\mathbb{R}$ or $[-1,1]$, where $\delta ' >0$ and $\beta >0$. $\bar{\tau} _1$ is defined in (\ref{bddst}).

\begin{pro}
\label{thm61-2}
Let $u^\varepsilon$ be the solution of (\ref{eq:spde}) and $w_{\varepsilon } ^\pm$ be defined in (\ref{supsub}). For every $0 < C_1 < \frac{1}{\mu}$,
\begin{align}
 w_{\varepsilon} ^- (t,x) \leq u^{\varepsilon} (t,x) \leq w_{\varepsilon } ^+ (t,x)\ for\ every\  t \in [0, C_1 \varepsilon |\log \varepsilon | \wedge \varepsilon \bar{\tau} _4 \wedge \bar{\tau _6} \wedge \varepsilon \bar{\tau} _7],\  x \in \mathbb{R}
\end{align}
holds $P$-a.s.
\end{pro}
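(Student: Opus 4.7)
The plan is to follow the template of the proof of Proposition \ref{thm61}, modified by the substitution of the $H^3(\mathbb{R})$ function $h(x)$ for the constant $C_2$ and by the passage from a bounded domain $D$ to the whole line $\mathbb{R}$. First I pass to the mollified objects $u^{\varepsilon,\delta}$ and
\begin{align*}
w_{\varepsilon,\delta}^\pm(t,x) = Y^{\varepsilon,\delta}\!\left(\tfrac{t}{\varepsilon},\,u_0(x)\pm\varepsilon h(x)(e^{\mu t/\varepsilon}-1),\,x\right),
\end{align*}
constructed as in Section \ref{sec5}. Because $C_1<1/\mu$ and $h$ is bounded, the initial argument lies in $[-2C_0,2C_0]$ for every $t\in[0,C_1\varepsilon|\log\varepsilon|]$ and sufficiently small $\varepsilon$, so the one-dimensional versions of the estimates of Sections \ref{sec4}--\ref{sec6} (Lemma \ref{thm43-2}, Propositions \ref{thm51}, \ref{thm52}, \ref{thm62}, Lemmas \ref{lem64}, \ref{lem65}) are all available on the window $[0,C_1\varepsilon|\log\varepsilon|\wedge\varepsilon\bar{\tau}_4\wedge\bar{\tau}_6\wedge\varepsilon\bar{\tau}_7]$.

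Next, writing $\Phi(t,x):=u_0(x)+\varepsilon h(x)(e^{\mu t/\varepsilon}-1)$, the direct chain rule computation parallel to (\ref{est6-1}) yields
\begin{align*}
\mathcal{L}(w_{\varepsilon,\delta}^+) = Y_\xi^{\varepsilon,\delta}\!\left[\mu h(x)e^{\mu t/\varepsilon} - A^{\varepsilon,\delta}\Phi_x^2 - \Phi_{xx} - 2\,\tfrac{Y_{\xi x}^{\varepsilon,\delta}}{Y_\xi^{\varepsilon,\delta}}\Phi_x\right] - Y_{xx}^{\varepsilon,\delta}.
\end{align*}
The one-dimensional version of Proposition \ref{thm62} bounds $|A^{\varepsilon,\delta}|$ by $C_5(e^{\mu t/\varepsilon}-1)$, Lemma \ref{lem65} bounds $Y_{\xi x}^{\varepsilon,\delta}/Y_\xi^{\varepsilon,\delta}$ by $C\varepsilon^\kappa$, and Lemma \ref{lem64} makes $Y_{xx}^{\varepsilon,\delta}$ of order $\varepsilon^\alpha$. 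Consequently the bracket is pointwise non-negative on the window provided $h\in H^3(\mathbb{R})$ is chosen positive and large enough that
\begin{align*}
\mu h(x) > C_5\bigl(u_0'(x)\bigr)^2 + |u_0''(x)| + o_\varepsilon(1).
\end{align*}
Such an $h$ exists because conditions (iv)--(v) of (\ref{eq:ini}) give exponentially decaying $H^3$ tails for $u_0\mp 1$, whence $(u_0')^2+|u_0''|$ itself has integrable tails and can be dominated by a smooth $H^3$ envelope.

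The main obstacle is the comparison step on the unbounded domain $\mathbb{R}$. I set $v:=w_{\varepsilon,\delta}^+-u^{\varepsilon,\delta}$; linearizing $f$ by the mean-value theorem and using condition (iv) of (\ref{reaction}) shows that $v$ satisfies a linear parabolic inequality $\dot v-v_{xx}-b(t,x)v\geq 0$ with $b$ bounded, and $v(0,\cdot)\equiv 0$. The far-field condition $u^\varepsilon(t,\pm\infty)=\pm 1$ imposed by (\ref{eq:spde}), together with the corresponding convergence of $w_{\varepsilon,\delta}^\pm$ to $\pm 1$ as $|x|\to\infty$---inherited from (iv)--(v) of (\ref{eq:ini}) via the fact that $Y^{\varepsilon,\delta}$ preserves small neighborhoods of the stable equilibria on the time window---yields $\liminf_{|x|\to\infty}v(t,x)\geq 0$ uniformly in $t$. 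A weighted (Phragm\'en--Lindel\"of type) maximum principle then forces $v\geq 0$ everywhere, and the analogous argument applied to $u^{\varepsilon,\delta}-w_{\varepsilon,\delta}^-$ produces the lower bound. Finally, the one-dimensional incarnations of Propositions \ref{thm51} and \ref{thm52} deliver uniform convergence $w_{\varepsilon,\delta}^\pm\to w_\varepsilon^\pm$ and $u^{\varepsilon,\delta}\to u^\varepsilon$ $P$-a.s., so the sandwich $w_{\varepsilon,\delta}^-\leq u^{\varepsilon,\delta}\leq w_{\varepsilon,\delta}^+$ passes to the limit $\delta\to 0$ and concludes the proof.
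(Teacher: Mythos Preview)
Your proposal is correct and follows essentially the same route as the paper: mollify to $w_{\varepsilon,\delta}^\pm$ and $u^{\varepsilon,\delta}$, compute $\mathcal{L}(w_{\varepsilon,\delta}^+)$ via the chain rule, use the one-dimensional analogues of Proposition \ref{thm62} and Lemmas \ref{lem64}--\ref{lem65} to control $A^{\varepsilon,\delta}$, $Y_{\xi x}^{\varepsilon,\delta}/Y_\xi^{\varepsilon,\delta}$ and $Y_{xx}^{\varepsilon,\delta}$, choose $h\in H^3(\mathbb{R})$ large enough (using the $H^3$ tails of $u_0\mp 1$ outside $[-1,1]$), and pass to the limit $\delta\to 0$ via Propositions \ref{thm51}--\ref{thm52}. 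The only substantive addition in your write-up is the explicit Phragm\'en--Lindel\"of argument for the comparison on the unbounded domain, which the paper dispatches in one sentence (``we can apply the comparison theorem \ldots\ because we consider the totally space $\mathbb{R}$'') by appealing to the standard parabolic maximum principle on $\mathbb{R}$ from \cite{fr}; your version makes the growth control at infinity explicit, which is a legitimate and arguably cleaner way to justify the same step.
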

\begin{proof}
We approximate super and sub solutions as following;
\begin{align}
w_{\varepsilon ,\delta} ^\pm (t,x) = Y^{\varepsilon ,\delta} \left( \frac{t}{\varepsilon } , u_0 ^{\pm} (x) \pm \varepsilon h(x) \left(e^\frac{\mu t}{\varepsilon}-1 \right) ,x \right) ,
\end{align}
where $Y^{\varepsilon ,\delta}$ is the solution of ODE (\ref{ode5-1}). The main difference from Proposition \ref{thm61} is that the domain $D$ is replaced by $\mathbb{R}$ and the boundary condition. However, we can apply the comparison theorem to $w_{\varepsilon ,\delta} ^\pm$ and $u^{\varepsilon ,\delta}$ because we consider the totally space $\mathbb{R}$. Then we get
\begin{align}
\label{est6-10}
\mathcal{L} (w_{\varepsilon ,\delta} ^+)(t,x) &\geq \bigg[ \mu h - \{ u' _0 +\varepsilon h' (e^{\frac{\mu t}{\varepsilon}} -1) \} ^2 - \{ \Delta u_0 +\varepsilon \Delta h (e^{\frac{\mu t}{\varepsilon}} -1) \}  \nonumber \\
&\ \ \ - 2 \{ u' _0 +\varepsilon h' (e^{\frac{\mu t}{\varepsilon}} -1) \} \frac{Y_{\xi x} ^{\varepsilon , \delta  }}{Y_{\xi} ^{\varepsilon , \delta  }}\bigg] Y_{\xi} ^{\varepsilon , \delta  } - Y_{xx} ^{\varepsilon , \delta  }
\end{align}
for all $x \in D$ and $\delta \in (0,\delta _\varepsilon )$. Lemma \ref{lem64} and Lemma \ref{lem65} allow us to take $h\in H^3(\mathbb{R})$ large so that the right hand side of (\ref{est6-10}) is larger than 0 and $h(x)$ decay like Definition \ref{def21} for all $|x|\geq 1$ because $u_0\pm 1$ is of class $H^3(\mathbb{R})$ out of the interval $[-1,1]$. See the proof of Proposition 2.5 of \cite{kl} for the detailed condition of $h$. The rest of proof is similar to that of Proposition \ref{thm61}.
\end{proof}

\begin{proof}[Proof of Theorem \ref{thm21}]
We take $\tilde{\gamma}$ large enough as in the proof of Theorem \ref{thm1}. We only need to check the condition in Definition \ref{def21}. By applying Proposition \ref{thm61} and replacing $\varepsilon \tau _4 \wedge \tau _6 \wedge \varepsilon \tau _7$ to $\varepsilon \bar{\tau} _4 \wedge \bar{\tau _6} \wedge \varepsilon \bar{\tau} _7$, We can show the conditions (i), (ii) and (iii) in a similar way to the proof of Theorem \ref{thm1}. Thus we check (iv) and (v) of Definition \ref{def21}. We only consider (iv). From the fact that $Y^\varepsilon = Y$ for all $x>1$ and the definition of $h \in H^3(\mathbb{R})$, we immediately see that
\begin{align}
u^\varepsilon (C_1\varepsilon |\log \varepsilon| & \wedge \varepsilon \bar{\tau} _4 \wedge \bar{\tau _6} \wedge \varepsilon \bar{\tau} _7,x) -1 \leq w_\varepsilon ^+ (C_1\varepsilon |\log \varepsilon| \wedge \varepsilon \bar{\tau} _4 \wedge \bar{\tau _6} \wedge \varepsilon \bar{\tau} _7,x) - 1 \nonumber \\
&\leq Y^\varepsilon \left( C_1 |\log \varepsilon| \wedge \varepsilon \bar{\tau} _4 \wedge \bar{\tau _6} \wedge \varepsilon \bar{\tau} _7, u_0 (x) + h(x) ( \varepsilon ^{1-C_1 \mu}-\varepsilon ) ,x \right)-1\nonumber \\
&=Y \left( C_1 |\log \varepsilon| \wedge \varepsilon \bar{\tau} _4 \wedge \bar{\tau _6} \wedge \varepsilon \bar{\tau} _7, u_0 (x) + h(x) ( \varepsilon ^{1-C_1 \mu}-\varepsilon ) \right)-1\nonumber \\
&\leq u_0 (x) + h(x) ( \varepsilon ^{1-C_1 \mu}-\varepsilon )-1 = \varepsilon ^\kappa g_1 (x) + h(x) ( \varepsilon ^{1-C_1 \mu}-\varepsilon ) \nonumber \\
&\leq \varepsilon^\kappa \bar{g} _1(x),
\end{align}
for all $x>1$, holds  $P$-a.s. for some $\bar{g} _1 \in H^1(\mathbb{R})$. The first inequality in the first line comes from Proposition \ref{thm61}. We get 
\begin{align}
u^\varepsilon (C_1\varepsilon |\log \varepsilon| \wedge \varepsilon \bar{\tau} _4 \wedge \bar{\tau _6} \wedge \varepsilon \bar{\tau} _7,x) -1 \geq - \varepsilon^\kappa \bar{g} _1(x),
\end{align}
in a similar way. The convergence $P ( C_1\varepsilon |\log \varepsilon| \leq \varepsilon \bar{\tau} _4 \wedge \bar{\tau _6} \wedge \varepsilon \bar{\tau} _7 ) \to 1$, as $\varepsilon \to 0$, complete the proof of (iv) in Definition \ref{def21}. We can show (v) in a similar way by virtue of Proposition \ref{thm61}.
\end{proof}

\begin{proof}[Proof of Theorem \ref{thm22}]
We set $C_1\varepsilon |\log \varepsilon |$ as an initial time. Now we construct the super and sub solutions again satisfying PDEs:
\begin{align}
\dot{w}_\varepsilon ^{\pm}(t,x) = \Delta w_\varepsilon ^{\pm}(t,x) + \frac{1}{\varepsilon}f(w_\varepsilon ^{\pm}) + \varepsilon ^\gamma a(x) \dot{W} _t
\end{align}
where the initial values are defined by
\begin{align}
w_\varepsilon ^{\pm}(0,x) :=\begin{cases}
m( \varepsilon ^{-\frac{1}{2}}(x-\xi_0 \pm C\varepsilon ^{1-C_1 \mu}) ) \pm \varepsilon ^\kappa, & x\in [-1,1],\\
1+\varepsilon ^\kappa \tilde{g}_1(x), & x \geq 1,\\
-1+\varepsilon ^\kappa \tilde{g}_2(x),& x \leq -1,
\end{cases}
\end{align}
for $\tilde{g}_1$, $\tilde{g}_2 \in H^1(\mathbb{R})$ where $m$ satisfies the following ODE:
\begin{align}
\begin{cases}
\Delta m + f(m) =0,\ m(0)=0,\ m(\pm \infty) =\pm 1, \\
m\text{ is monotonous increasing}.
\end{cases}
\end{align}
From the form of the generated interfaces, we can take $\tilde{g}_1$, $\tilde{g}_2 \in H^1(\mathbb{R})$ so that $w_\varepsilon ^-(0,x)\leq u_0 ^\varepsilon (x) \leq w_\varepsilon ^+(0,x)$ holds. And thus, these estimates, the maximal principle and the smooth approximation as in Section \ref{sec5} allow us to compare the solutions as below.
\begin{align}
\label{est:comp}
w_\varepsilon ^-(t,x)\leq u^\varepsilon (t,x) \leq w_\varepsilon ^+(t,x)\ holds \ for \ all \ t\in[0,\varepsilon ^{-2\gamma -1}T] \ and \ x \in \mathbb{R}\ P{\text -}a.s.
\end{align}
It is easy to see that $\| w_\varepsilon ^\pm(0,\cdot) - m( \varepsilon ^{-\frac{1}{2}}(\cdot -\xi_0 \pm C\varepsilon ^{1-C_1 \mu}) ) \|_{H^1}\leq C\varepsilon ^\kappa$ for $\kappa >1$ from Theorem \ref{thm21}. And hence, we can complete the proof of the theorem in the same way as the proof of Theorem 1.1 of \cite{kl}.
\end{proof}

{\bf Acknowledgements}

The author would like to thank Professor T. Funaki for his tremendous supports and incisive advices. This work was supported by the Program for Leading Graduate Schools, MEXT, Japan and Japan society for the promotion of science, JSPS.


\begin{thebibliography}{99}
\bibitem{ham} M. Alfaro, D. Hilhorst, H. Matano, {\it The singular limit of the Allen-Cahn equation and the FitzHugh-Nagumo system}, J. Differential Equations, {\bfseries 245} (2008), no. 2, 505-565.
\bibitem{xc} X. Chen, {\it Generation, propagation, and annihilation of metastable patterns}, J. Differential Equations, {\bfseries 206} (2004), no. 2, 399-437.
\bibitem{fr} A. Fliedman, {\it Partial Differential Equations of Parabolic type}, Prentice-Hall, Englewood Cliffs (New Jersey) 1964.
\bibitem{f94} T. Funaki, {\it Singular limit for reaction-diffusion equation with self-similar Gaussian noise}, (English summary) New trends in stochastic analysis (Charingworth, 1994), 132-152, World Sci. Publ., River Edge, NJ, 1997.
\bibitem{f} T. Funaki, {\it The scaling limit for a stochastic PDE and the separation of phases}, Probab. Theory Related Fields, {\bfseries 102} (1995), no. 2, 221-288.
\bibitem{k} T. Funaki, {\it Regularity properties for stochastic partial differential equations of parabolic type}, Osaka J. Math. {\bfseries 28} (1991), no. 3, 495-516. 
\bibitem{kl} K. Lee, {\it Generation and motion of interfaces in one-dimensional stochastic Allen-Cahn equation}, ArXiv:1511.05727.
\bibitem{dpz} G. Da Prato, J.Zabczyk, {\it Stochastic Equations in infinite dimensions}, second edition, Encyclopedia of Mathematics and Its Applications  {\bfseries 152}, Cambridge
\end{thebibliography}
\end{document}